\documentclass[a4paper, 11pt, reqno]{amsart} %11pt?
\usepackage[T1]{fontenc}
\usepackage[utf8]{inputenc}
\usepackage[text={6.5in,8.6in},centering]{geometry}
\usepackage[toc,page]{appendix}
\usepackage[english]{babel}
\usepackage{graphicx}
\usepackage{amsmath,mathtools, mathrsfs}
\usepackage{amsfonts,amsmath, amssymb, amsthm, dsfont}
\usepackage{hyperref}
\usepackage{tikz}
\usepackage{tikz-3dplot}
\usepackage{comment}
\usepackage{url}
\usepackage{dutchcal}
\usetikzlibrary{arrows.meta,calc,decorations.pathreplacing}
\newtheorem{theorem}{Theorem}[section]
\newtheorem{proposition}[theorem]{Proposition}
\newtheorem{lemma}[theorem]{Lemma}
\newtheorem{corollary}[theorem]{Corollary}
\newtheorem{definition}[theorem]{Definition}
\newtheorem{remark}[theorem]{Remark}
 
\newcommand{\abs}[1]{\left\lvert#1\right\rvert} 
 
 \newcommand{\set}[1]{\left\{#1\right\}}
 \title{Poincaré inequalities on hyperbolic-type spaces}
 \subjclass[2020]{30L99,  05C12, 26D10}
\keywords{Poincaré inequalities, spiderwebs, dyadic cubes, quasi-isometries, Gromov hyperbolic spaces}
\author[]{Anna Mc Court, Federico Santagati
and Maria Vallarino}
\address[Anna Mc Court]{Dipartimento di Matematica \\ Università degli Studi di Genova\\ Via Dodecaneso 35\\ 16146
Genova\\ Italy
}
\email{anna.mccourt@edu.unige.it}
\address[Federico Santagati]{Dipartimento di Scienze Matematiche ``Giuseppe Luigi Lagrange'' \\ Politecnico di Torino\\
C.so Duca degli Abruzzi 24 \\ 10129 Torino\\ Italy}
\email{federico.santagati@polito.it}

\address[Maria Vallarino]{Dipartimento di Scienze Matematiche ``Giuseppe Luigi Lagrange'' \\ Politecnico di Torino\\
C.so Duca degli Abruzzi 24 \\ 10129 Torino\\ Italy}
\email{maria.vallarino@polito.it}
\thanks{The authors are members of the Gruppo Nazionale per l'Analisi Matematica, la Probabilità e le loro Applicazioni (GNAMPA) of the Istituto Nazionale di Alta Matematica (INdAM). The second author was 
partially supported by the grant “INdAM – GNAMPA Project”, CUP E53C25002010001 and he is grateful to Nikolaos Chalmoukis and Stefano Meda for valuable discussions on the spiderweb discretisation in the upper half-plane setting}
\begin{document}
\begin{abstract}
We establish global $L^p$-Poincar\'e inequalities, for $ p\in[1,\infty)$, on a class of nondoubling hyperbolic-type metric measure spaces. The proof relies on a discretisation of the space, which gives rise to a Gromov hyperbolic graph, called spiderweb, quasi-isometric to the original space. We prove global Poincar\'e inequalities for spiderwebs endowed with suitable measures and develop a general transference principle from discrete graphs to metric measure spaces. Combining these results yields global Poincar\'e inequalities under natural geometric and measure assumptions on the base space.
\end{abstract}
\maketitle

\section{Introduction}

Poincaré inequalities play a fundamental role in analysis on metric measure spaces, where they are closely related to Sobolev spaces, heat kernel estimates, differentiability properties of Lipschitz functions, and the study of differential operators; see, for instance, the classical references \cite{Ch, HKST, SaloffCoste1, SaloffCoste2, S} and the references therein.
While local Poincar\'e inequalities are known to hold under fairly
general geometric assumptions (see e.g., \cite[Sect. 5.6.3]{SaloffCoste2} and \cite{BPV}), global inequalities are considerably more delicate,
since their validity depends on the large-scale geometry of the underlying
space.

The purpose of this paper is to establish global Poincar\'e inequalities
on a large class of locally doubling but globally nondoubling hyperbolic-type metric measure spaces through a
discrete approximation procedure.

Recall that a metric measure space $(M,d_M,\mu)$ supports a local
$L^p$-Poincar\'e inequality if for every $R>0$ there exists a constant
$C_R>0$ such that
\begin{align}\label{poinc:loc}
\int_B |f-f_B|^p\,\mathrm{d}\mu
\le
C_R\,\operatorname{rad}(B)^p
\int_{\lambda B} g_f^p\,\mathrm{d}\mu,
\end{align}
for every locally integrable function $f$ and ball $B\subset M$ with radius $\operatorname{rad}(B)\le R$, where $f_B=\mu(B)^{-1}\int_B f\,\mathrm d\mu$ denotes the average of
$f$ over $B$, $\lambda\geq1$ is a fixed dilation factor, and $g_f$ is
an upper gradient of $f$; see \cite[Sect.~7.22]{He} for a precise definition. The inequality is called {\it weak} if $\lambda>1$, and {\it strong} if $\lambda=1$.

If the constants $C_R$ may be chosen independently of $R$, then we say
that $(M,d_M,\mu)$ supports a global $L^p$-Poincar\'e inequality. In the literature, when no restriction on the scale is imposed, the adjective
``global'' is sometimes omitted, and one simply says that the space supports an
$L^p$-Poincar\'e inequality.
If \eqref{poinc:loc} holds only for balls of radius at most $R_0$, we say that the local Poincaré inequality holds up to scale $R_0$.

Let $(N,d_N, \mu_N)$ be a geodesic doubling metric measure space and consider $S=N\times \mathbb R_{+}$ where $\mathbb R_{+}=(0,\infty).$
We equip $S$ with the distance
\[
d_S((x,y),(x',y'))
=
C_N \operatorname{arcosh}
\left(
1+
\frac{
d_N(x,x')^2+(y-y')^2
}
{2yy'}
\right) \qquad \forall (x,y),(x',y')\in S,
\]
and the measure $$\mathrm{d}\rho(x,a)=\mathrm{d}\mu_N(x) \mathrm{d}a/a,$$
where $C_N>0$ is a suitable normalising parameter, introduced to obtain a 1-quasi-isometric relation with a discrete model of the space. Its precise value is irrelevant, since global Poincaré inequalities are invariant under rescaling of the metric (up to the multiplicative constants appearing in the inequality). We shall refer to $S$ as hyperbolic-type space and investigate global Poincaré inequalities on it.

The main contributions of the paper are threefold. First, we develop analysis on spiderwebs rooted at infinity (see Definition \ref{def:sw}), a
natural horospherical variant of the spiderwebs introduced in
\cite{CMS}. We introduce quasi-flow measures on such graphs, extending the class of flow measures previously considered on trees  in \cite{LSTV21}, and prove global $L^p$-Poincaré inequalities for a suitable class of spiderwebs. Second, we introduce a new discretisation of hyperbolic-type spaces, yielding a multiscale spiderweb $\mathcal{SW}_N$ associated with a dyadic cube decomposition of $N$. This graph is shown to be Gromov hyperbolic and 1-quasi-isometric to the hyperbolic-type space. Third, we establish a general transference theorem, of independent interest, which allows global Poincaré inequalities to be transferred from graphs to metric measure spaces under natural metric and measure compatibility assumptions. Beyond the applications proved here, both the spiderweb construction and the transference theorem provide tools that may be useful in the study of other analytic problems.

Combining the geometric properties of $\mathcal{SW}_N$, the global Poincar\'e
inequality on spiderwebs, and the transference principle, we obtain our
main result.

\begin{theorem}\label{main:theorem}
Let $(N,d_N,\mu_N)$ be a geodesic doubling metric measure space and let $S=N\times\mathbb R_{+}$ be the hyperbolic-type space equipped with the metric $d_S$ and the measure $\mathrm{d}\rho(x,a)=\frac{\mathrm{d}\mu_N(x)\,\mathrm{d}a}{a}.$
Then, for every $p \in [1,\infty)$ such that $N$ supports a global $L^p$-Poincar\'e inequality, $S$ also supports a global $L^p$-Poincar\'e inequality.

Furthermore, there exist positive constants $C_p$, $\mathcal{C}$, and $\mathcal{R}_0$ such that
\[
\int_{B_S(x_0,R)}
|f-f_{B_S(x_0,R)}|^p\,\mathrm{d}\rho
\le
C_p R^p
\int_{B_S(x_0,R+\mathcal{C})}
|g_f|^p\,\mathrm{d}\rho
\]
for every $x_0\in S$, every $R\ge \mathcal{R}_0$, every locally integrable function
$f$, and every upper gradient $g_f$ of $f$.
\end{theorem}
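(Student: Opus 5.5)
The plan follows the three-step strategy outlined in the introduction: discretise, prove the inequality on the discrete model, and transfer it back to $S$.

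\textbf{Step 1: discretisation.} Fix a system of dyadic cubes $\{Q^k_\alpha\}$ on the doubling space $N$, in the sense of Christ or Hytönen--Kairema, with side length comparable to $\delta^k$. To each cube $Q^k_\alpha$ associate the box $Q^k_\alpha\times[\delta^{k+1},\delta^k)\subset S$. These boxes tile $S$ and have uniformly bounded $d_S$-diameter. Each box is a vertex of the spiderweb $\mathcal{SW}_N$. Two vertices are joined when they are parent and child, or when they are neighbouring cubes at the same level. Give each vertex the weight $\rho$(box) $\simeq \mu_N(Q^k_\alpha)$. With $C_N$ chosen appropriately, one checks three facts:
\begin{itemize}
\item the graph is $1$-quasi-isometric to $S$, meaning $d_S$ and the graph distance differ by at most an additive constant;
\item the graph is Gromov hyperbolic;
\item the weight is a quasi-flow measure, since $\mu_N(Q^k_\alpha)\simeq\sum_{\text{children}}\mu_N(Q^{k+1}_\beta)$, while horizontal neighbours have comparable mass by doubling.
\end{itemize}

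\textbf{Step 2: discrete global Poincaré.} Apply the spiderweb Poincaré theorem to $\mathcal{SW}_N$ with the quasi-flow measure. The hypothesis of that theorem that involves $N$ is that every horizontal level, i.e.\ the graph of cubes of a fixed generation, satisfies a uniform Poincaré inequality. This should follow from the global $L^p$-Poincaré inequality on $N$, via the standard discretisation argument for doubling spaces supporting Poincaré, since the level-$k$ graph is uniformly quasi-isometric to $N$ rescaled by $\delta^{-k}$. The conclusion is a Poincaré inequality on discrete balls of radius $R$ with constant $CR^p$. On trees, and hence on spiderwebs, the growth in $R$ comes from telescoping along geodesic rays towards the root at infinity, together with the flow property, which controls averages over descendants.

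\textbf{Step 3: transference.} Invoke the transference theorem. Its hypotheses are:
\begin{itemize}
\item the quasi-isometry from Step 1, with uniformly bounded cells;
\item compatibility of the measures, namely $\rho(\text{cell})\simeq$ vertex weight;
\item a local Poincaré inequality on $S$ up to some fixed scale.
\end{itemize}
The local inequality holds because $S$ is locally doubling and locally bi-Lipschitz to $N\times\mathbb R$ with a product measure, and $N$ supports Poincaré. Concretely, for a function $f$ with upper gradient $g_f$, define the discrete function $F(v)=f_{\text{cell}(v)}$. A discrete gradient is controlled by the local Poincaré inequality on the union of two adjacent cells, giving $|F(v)-F(w)|^p\lesssim\rho(\text{cell})^{-1}\int_{\text{enlarged cells}} g_f^p$. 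Then split
\[
\int_B|f-f_B|^p \lesssim \sum_{\text{cells}}\int_{\text{cell}}|f-f_{\text{cell}}|^p+\sum_v\rho(\text{cell}_v)\,|F(v)-F_{B'}|^p .
\]
The first sum is bounded by local Poincaré. The second is bounded by the discrete inequality on a discrete ball $B'$ of radius $R+O(1)$. The enlargement of $B$ to $B_S(x_0,R+\mathcal C)$ comes from the additive quasi-isometry constant and the cell diameters. Requiring $R\ge\mathcal R_0$ allows the additive errors to be absorbed into $R^p$. Balls with $R<\mathcal R_0$ are covered by the local inequality, which yields the global statement.

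\textbf{Main obstacle.} I expect Step 2 to be the hardest, and specifically the verification that horizontal levels inherit uniform Poincaré inequalities from $N$ and that these combine with the vertical quasi-flow structure to give the $R^p$ constant. The difficulty is that a spiderweb ball of radius $R$ spans roughly $R$ generations with exponentially growing horizontal extent. My first attempt would be to reduce to the vertical direction: compare each vertex to its ancestor at the top level of the ball, and use the flow property to bound the averaged differences by $R^{p-1}$ times the sum of gradients along ancestor paths. The horizontal Poincaré would then be used only at the top level, where the ball has bounded horizontal size.
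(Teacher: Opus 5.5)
Your architecture matches the paper's: a dyadic spiderweb $\mathcal{SW}_N$ with a flow weight, a global Poincaré inequality on the graph, and transference back to $S$ via a local Poincaré inequality on $S$. Your Step 3 is essentially fine. Splitting with $c=F_{B'}$ even avoids the comparison $\rho(B_M(x,r+C))\lesssim\rho(B_M(x,r))$ that the paper's double-integral argument needs.

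The gap is in Step 2, where you misidentify what the discrete inequality rests on. The spiderweb theorem (Theorem~\ref{Proposition Poinc on SW}) assumes no Poincaré inequality on the levels $\mathcal G_k$. The $L^p$-Poincaré inequality on $N$ plays no role in the discrete step; it is used only for the local inequality on $S$ (Lemma~\ref{lem:localPI}), which you already need in Step 3. Your own sketch makes the horizontal inequality vacuous anyway, since you apply it only on a set of bounded size, where a path of bounded length with comparable weights suffices.

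What the vertical reduction actually needs, and what you only assert in passing (``where the ball has bounded horizontal size''), is this: every pair of vertices is joined by a standard geodesic (ascent, horizontal segment, descent) whose horizontal segment has length at most a uniform $L_0$. Applied to $x\in B(x_c,R)$ and $x_c$, it guarantees two things:
\begin{itemize}
\item the ascending chain of $x$ stays in $B(x_c,R+L_0)$;
\item the ancestor of $x$ at height $\mathcal h(x_c)+R$ lies within horizontal distance $L_0$ of $p^R(x_c)$.
\end{itemize}
The paper keeps everything inside $B(x_c,R)$ by following the standard geodesic from $x$ until it meets the ray $[x_c,p^R(x_c)]$ and then climbing to $p^R(x_c)$. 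The same property drives the Fubini step. A vertex $z$ lies on the path attached to $x$ only if $x$ descends, within $2R$ generations, from a vertex in a horizontal ball of bounded radius about $z$. So the quasi-flow property and local doubling give $\sum_x\mu(x)\le CR\,\mu(z)$, and hence the factor $R^{p/p'}\cdot R=R^p$.

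This property does not follow from the tree case (``on trees, and hence on spiderwebs''). It must be proved for $\mathcal{SW}_N$, and this is where the hyperbolic geometry enters. The paper derives it in Theorem~\ref{thm:normal-form} from the compression estimate of Lemma~\ref{lem:vertical-compression},
\[
d_H(p^m x,p^m y)\le C\eta^{-m}d_H(x,y)+C,
\]
which uses only doubling, the nesting of the cubes and $B_N(c_Q,c_1\eta^k)\subset Q\subset B_N(c_Q,c_2\eta^k)$. If a geodesic had a long horizontal part at its maximal height, climbing a fixed number $m$ of extra levels would shorten it. The projection property $\ell(p^m\gamma)\le\ell(\gamma)$ of spiderwebs then pushes any geodesic into standard form without increasing its length.

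Your Step 1 claims of a $(1,C)$-quasi-isometry and Gromov hyperbolicity (``one checks'') also rest on this normal form, together with the height estimate $h=\log_\eta d_N(c_Q,c_R)+O(1)$ of Proposition~\ref{prop:hmax}. So the crux of the argument sits in the part you treat as routine, while the obstacle you single out, horizontal Poincaré inequalities inherited from $N$, is not needed at all.
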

The global Poincaré inequality on $N$ required in Theorem \ref{main:theorem} is used only to deduce a local Poincaré inequality on $S$ up to a scale large enough (see Lemma \ref{lem:localPI}). Therefore, the assumption in Theorem \ref{main:theorem} can be weakened by directly requiring such a local Poincaré inequality on $S$.
Theorem \ref{main:theorem} provides a new family of locally doubling, globally nondoubling metric measure spaces with exponential volume growth which support global Poincaré inequalities.
To the best of our knowledge our main result  is new even in the model case of the \(ax+b\) group  equipped with a right Haar measure  (see for example \cite{BPV} for a local Poincaré inequality in this setting). Although locally doubling, these spaces can be highly nonhomogeneous at large scales. In particular, the measures of unit balls need not be uniformly comparable: depending on the vertical coordinate, balls of a fixed radius may have arbitrarily small or arbitrarily large measure. Combined with the exponential growth of volume, this places these spaces well outside the classical framework in which global Poincaré inequalities are usually established.
Different varieties of global Poincaré inequalities have been obtained in doubling metric measure spaces or spaces of finite measure under suitable geometric assumptions \cite{BBS, BP, CoulhonSaloffCoste1993, GL,  HajlaszKoskela2000,HeinonenKoskela1998, Jerison, RussSire}. Outside this framework, and particularly in spaces with exponential volume growth, considerably fewer examples are known. For instance, on the classical $n$-dimensional hyperbolic half-space $\mathbb R^{n-1} \times \mathbb R_+$ equipped with its Riemannian volume measure $\mathrm{d}x\mathrm{d}a/a^n$ (equivalently, a left Haar measure on the $ax+b$ group), global Poincaré inequalities do not hold \cite[Sect. 3.1]{BPV}.  Previously known examples of global Poincaré inequalities on spaces of exponential growth are those associated with flow measures on trees established in \cite[Th. 4.1]{LSTV23}. 
Observe that when $N=\mathbb R^{n-1}$ and $C_N=1$, $d_S$ coincides with the
classical hyperbolic metric on the upper half-space. In this context, when $\mu_N$ is the Lebesgue measure, $\rho$ is a right Haar measure of the $ax+b$ group. More generally, when $N$ is a stratified group endowed with a
Carnot--Carathéodory distance, the metric $d_S$ coincides, up to the
multiplicative constant $C_N$, with the canonical sub-Riemannian distance on
the solvable extension $N\rtimes\mathbb R_+$ (see Proposition~2.7 in
\cite{MartiniOttazziVallarino2018}).
 Our theorem applies, for instance, when $N$ is a doubling Lie group equipped with the Carnot--Carathéodory distance
associated with a Hörmander system of vector fields \cite[Th. 3.1]{BPV}. In the special case of $NA$ groups, the measures $\mathrm{d}\rho(x,a)=\mathrm{d}\mu_N(x) \mathrm{d}a/a$ with $\mu_N$ doubling are exactly the measures whose densities are constant along the
integral curves of the vertical vector field $X_0$, i.e. the
$X_0$-flow measures considered in \cite[Def. 2.2]{DLMV}.

The strategy of approximating a metric measure space by a suitable graph and transferring analytic properties between the discrete and continuous settings goes back to the work of Kanai \cite{K} and was systematically developed by Coulhon and Saloff-Coste \cite{CS-C}. More recently, Maity and Neelakantan \cite{MN} used discretisation and transference techniques to establish uniform Poincaré inequalities under suitable volume growth assumptions.  Our construction is related in spirit to the theory of hyperbolic
fillings of doubling metric spaces \cite{BonkSaksman, BonkSchramm2000}, but our approach differs substantially in both the geometric setting and the objective. Instead of discretising a metric space by a net, we construct a spiderweb arising from a dyadic cube decomposition of the horizontal layer and use its hyperbolic structure to establish global Poincaré inequalities on the continuous model.

Hyperbolic-type spaces of this form have been studied extensively by Li \cite{LiJFA,LiMa,Li2007}, in connection with centered and uncentered maximal functions, heat kernel estimates, and Riesz transforms. His approach relies on the natural Riemannian structure of $S$, which is available when the base space $N$ is a smooth Riemannian manifold. In contrast, our viewpoint is purely metric and does not require any smooth or algebraic structure. In \cite{HQLi4} and in \cite{BBa}, the
authors make use of global Poincaré inequalities on doubling manifolds
to obtain boundedness results for Riesz transforms of Schr\"odinger
operators. In this direction, Theorem \ref{main:theorem} lays the
ground for future work on the subject, in the nondoubling setting.

The paper is organised as follows.
In Section \ref{sec:SW} we establish global Poincaré inequalities for
quasi-flow measures on spiderwebs rooted at infinity whose standard
geodesics have uniformly bounded horizontal components.
Section~\ref{sec:approx} is devoted to the construction and geometric analysis of the
graph $\mathcal{SW}_N$, including the proof of Gromov hyperbolicity and the
quasi-isometry with the continuous hyperbolic-type space $S$.
In Section~\ref{sec:transf} we prove the transference theorem and derive the global
Poincar\'e inequalities on hyperbolic-type spaces.
\medskip

Throughout the paper, the symbol $C$ denotes a positive constant whose
value may change from line to line. Moreover, given two non-negative
quantities $A$ and $B$, we write
\[
A \simeq B
\]
if there exist constants $C_1,C_2>0$ such that
\[
A\leq C_1 B
\qquad\text{and}\qquad
B\leq C_2 A.
\]
Dependence of constants and symbols on relevant parameters is sometimes indicated by
subscripts.
	\section{Poincaré inequalities on Spiderwebs}\label{sec:SW}
    \subsection{Setting and preliminary notions}
In this section we prove the validity of a {\it global}
Poincaré inequality on {\it spiderwebs} equipped with
{\it quasi-flow} measures. We begin by fixing some
terminology and recalling some properties that will be
used throughout the proof.

Let $(X,d_{X}, \rho)$ be a metric measure space. Recall that $\rho$ is locally doubling if for every $R>0$ there is a $C_R>0$ such that
    \begin{align}\label{Locally doubling property on S}
		\rho(B_{X}(x,2r)) \leq C_R \rho(B_{X}(x,r)),\quad \forall x \in X, \forall r\in (0,R].
	\end{align}

A graph is a pair $\mathcal G=(V,E)$, where $V$ is a
countable set of vertices and $E$ is a set of unoriented
edges joining pairs of vertices. We assume that $\mathcal{G}$ is connected and denote by
$d_{\mathcal G}$ the graph distance on $V$ and by
$B_{\mathcal{G}}(z,r)$ the closed ball of center $z\in \mathcal{G}$ and radius $r>0$ with respect to $d_{\mathcal{G}}$.
By a slight abuse of notation, we shall write $x\in \mathcal G$ to mean that
$x\in V$.
If two vertices $x,y\in\mathcal{G}$ are joined by an edge, we write $x\sim y$ and call them neighbours.
A path in $\mathcal G$ is a finite sequence of vertices $\gamma=\{x_0,\ldots,x_n\}$ such that $x_i\sim x_{i+1}$  for every $i=0,\ldots,n-1$. The length of $\gamma$ is denoted by $\ell(\gamma)$ and is equal to the number of edges of the path.

For a function $f$ on a graph  $\mathcal{G}$ we define its discrete gradient as
   \begin{align}
       |\nabla f(x)|=\sum_{y\sim x} |f(x)-f(y)| \qquad \forall x \in \mathcal{G}.
   \end{align}
The spiderwebs introduced in \cite[Definition~2.9]{CMS} are obtained
from a tree rooted at a distinguished vertex by adding suitable
{\it horizontal} edges. In this paper we use a natural horospherical
variant, adapted to multiscale structures indexed by $\mathbb Z$.
\begin{definition}\label{def:sw}
A spiderweb rooted at infinity is a connected graph
$\mathcal{SW}$ endowed with a height function
$\mathcal h:\mathcal{SW}\longrightarrow\mathbb Z$
and a predecessor map $p:\mathcal{SW}\longrightarrow \mathcal{SW}$
with the following properties.
\begin{enumerate}
\item For every $x\in \mathcal{SW}$, $x\sim p(x)$ and $\mathcal  h(p(x))=\mathcal h(x)+1.$
The edges of the form $\{x,p(x)\}$ are called {vertical}.

\item Every nonvertical edge joins two distinct vertices of the same
height. Such edges are called {horizontal}.

\item If $x$ and $y$ are joined by a horizontal edge, then either $p(x)=p(y)$
or $p(x)$ and $p(y)$ are joined by a horizontal edge.
\end{enumerate}
\end{definition}
Notice that the subgraph formed by the vertical edges is a forest, i.e., an acyclic graph,
not necessarily a tree.
Spiderwebs were introduced in \cite{CMS} as $(1,C)$-quasi-isometric discretisations of Gromov hyperbolic spaces and exploited to establish $L^p$-bounds for the
Hardy--Littlewood maximal operator. Such a class is needed since trees are not, in general, quasi-isometric to hyperbolic spaces. For instance, a tree cannot be quasi-isometric to the Poincar\'e disc, since the Gromov boundary of a tree is totally disconnected, whereas the boundary of the Poincar\'e disc is connected. See also \cite{BI} for a different discrete approximation of Gromov hyperbolic spaces.
\\ From now on we denote by $\mathcal{SW}$ a spiderweb rooted at infinity and we will simply call it spiderweb.

For every $x\in\mathcal{SW}$, set
\[
p^0(x)=x,
\qquad
p^m(x)=p\bigl(p^{m-1}(x)\bigr)
\quad\text{for every }m\geq1.
\]
and 
\[s(x)=\{y\in\mathcal{SW}:p(y)=x\},
\qquad
s_0(x)=\{x\},
\qquad
s_m(x)=\bigcup_{y\in s_{m-1}(x)}s(y) \quad\text{for every }m\geq1.
\]
Given $x,y\in\mathcal{SW}$, we write $x\leq y$ if
$y=p^m(x)$
for some nonnegative integer $m$.
The defining property immediately implies that, if
$\gamma=\{x_0,\ldots,x_n\}$
is a horizontal path (i.e., a path connecting vertices at the same height), then, for every nonnegative integer $m$, the
sequence
$\{p^m(x_0),\ldots,p^m(x_n)\}$, after possibly deleting consecutive repetitions, is a horizontal path.
We denote this path by $p^m\gamma$ and have
\begin{align}\label{CMS}
\ell(p^m\gamma)\leq\ell(\gamma).
\end{align}Property \eqref{CMS} plays a crucial role in the proof of the existence of {\it standard geodesics}, established in \cite[Prop. 3.2]{CMS} in the case of vertex rooted spiderwebs. We now adapt this definition to our setting.
\begin{definition}\label{def:standard}
Let $x,y \in \mathcal{SW}$ and $h\ge\max\{\mathcal h(x),\mathcal h(y)\}$.
We denote by $\Gamma(x,y;h)$ the family of paths obtained by
\begin{itemize}
\item[(i)] ascending vertically from $x$ to its ancestor $p^{(h-\mathcal h(x))}(x)$ at height $h$;

\item[(ii)] following a horizontal path in $\mathcal{h}^{-1}(h)$ joining ${p^{(h-\mathcal h(x))}(x)}$ and ${p^{(h-\mathcal h(y))}(y)}$;

\item[(iii)] descending vertically from ${p^{(h-\mathcal h(y))}(y)}$ to $y$.
\end{itemize}
A geodesic $\gamma$ joining $x$ and $y$ in $\mathcal{SW}$ is {\it standard} if $\gamma \in \bigcup_{h \ge \max\{\mathcal h(x),\mathcal h(y)\}}  \Gamma(x,y;h).$
\end{definition} 
Notice that if $\gamma\in\Gamma(x,y;h)$ is a geodesic, then its horizontal component is necessarily a shortest horizontal path between its endpoints. Indeed, otherwise replacing it with a shorter horizontal path would produce a path joining $x$ and $y$ whose length is strictly smaller than $\ell(\gamma)$.
Observe that, for fixed $x,y \in \mathcal{SW}$ and $h \ge \max\{\mathcal{h}(x),\mathcal{h}(y)\}$, the set
$\Gamma(x,y;h)$ may be empty. Indeed, the subgraph
induced by the vertices of $\mathcal{SW}$ at height $h$
is not necessarily connected.

In the vertex rooted
setting introduced in \cite{CMS}, the existence of standard geodesics and the uniform
boundedness of their horizontal components under Gromov
hyperbolicity are proved in \cite[Proposition~3.2]{CMS}.
For the spiderweb $\mathcal{SW}_N$ constructed below, the
corresponding properties will be established directly in
Theorem \ref{thm:normal-form}. For background on Gromov hyperbolic spaces  we refer to \cite[Ch. III.H]{BH}.

    We now introduce an important class of measures on a spiderweb. This generalises flow measures introduced on trees in \cite{LSTV21}.
\begin{definition}
A function $\mu : \mathcal{SW} \to \mathbb R_{+}$ is said to be a quasi-flow if there exists a constant
$C \ge 1$ such that for every $n \in \mathbb{N}$ and every $x \in \mathcal{SW}$
\begin{equation}\label{quasi-flow}
  \sum_{y \in s_n(x)} \mu(y) \le C\,\mu(x).
\end{equation}  
\end{definition}
In order to prove the main result of this section, that is a global Poincaré inequality on $\mathcal{SW}$, we need to assume that the measure $\mu$ is a locally doubling quasi-flow on $\mathcal{SW}$. The local doubling condition alone implies that if $x,y \in \mathcal{SW}$ and $x \sim y$, then $\mu(x) \simeq \mu(y)$ uniformly. 

Indeed, if $x\sim y$, then
\begin{align*}
    \mu(y)& \le \mu(B_{\mathcal{SW}}(x,1)) \le C_{1/2}\mu(B_{\mathcal{SW}}(x,1/2))=C_{1/2}\mu(x).
\end{align*} Interchanging the roles of $x$ and $y$ yields the reverse inequality.
In particular, for any fixed positive integer $M$, if $x \in B_{\mathcal{SW}}(y,M)$ then $\mu(x) \simeq_M \mu(y)$ for a constant only depending on $M$. Observe that if $\mathcal{SW}$ supports a locally doubling measure, then it has bounded geometry, i.e., a vertex has a bounded number of neighbours. Indeed,
\begin{align*}
  \#B_{\mathcal{SW}}(x,1)\mu(x)&\simeq \mu(B_{\mathcal{SW}}(x,1)) \le C_{1/2} \mu(B_{\mathcal{SW}}(x,1/2))=C_{1/2}\mu(x),
\end{align*}
so that $\sup_{x \in \mathcal{SW}}\#\{y \in \mathcal{SW} \ : \ y \sim x\} \le C$.

\begin{remark}

Recall that a flow measure on a rooted tree $\mathcal T$ is a measure $\mu$ satisfying $\sum_{y\in s(x)} \mu(y)=\mu(x)$ for every $x\in\mathcal T$. We use the same terminology for a spiderweb.
A straightforward adaptation of the proof of \cite[Prop.~2.8]{LSTV21} shows that every locally doubling flow measure on a spiderweb in which every vertex has at least two successors has exponential volume growth. In particular, such a measure is not globally doubling.
\end{remark}
	 	\subsection{Poincaré inequalities}
	Global Poincaré inequalities for flow measures were established on trees in \cite[Th. 4.1]{LSTV23}. In the present setting, the horizontal connections of a spiderweb introduce additional difficulties that are not present in the tree case.
	\begin{theorem}\label{Proposition Poinc on SW} Let $\mathcal{SW}$ be a  spiderweb such that every pair of vertices can be joined by a standard geodesic whose horizontal component has uniformly bounded length. Let $\mu$ be any locally doubling quasi-flow on $\mathcal{SW}$.
Then, for every $p \in [1,\infty)$, there exists a constant $C_p$ such that for every  $x_c\in \mathcal{SW}$, $r>0$ and function
$f : \mathcal{SW} \to \mathbb{C}$,    
		\begin{align*}
			\sum_{x \in B_\mathcal{SW}(x_c,r)} |f(x) - f_{B_\mathcal{SW}(x_c,r)}|^p \mu(x) \leq C_p r^p	\sum_{x \in B_\mathcal{SW}(x_c,r)} |\nabla f(x)|^p \mu(x).
		\end{align*}
			\end{theorem}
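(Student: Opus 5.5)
We use the standard reduction: for any $c\in\mathbb C$ and $B=B_{\mathcal{SW}}(x_c,r)$,
\[
\sum_{x\in B}|f(x)-f_B|^p\mu(x)\le 2^p\sum_{x\in B}|f(x)-c|^p\mu(x).
\]
So it suffices to choose a good reference value $c$ and bound $\sum_{x\in B}|f(x)-c|^p\mu(x)$. Take $c=f(z)$, where $z$ is a vertex of maximal height in $B$. Since $\mathcal h$ changes by at most $1$ along edges, $\mathcal h(z)\le \mathcal h(x_c)+r$. For small $r$ (say $r<1$) the ball is a single vertex and there is nothing to prove, so assume $r\ge1$. Let $K$ be the uniform bound on the horizontal components of standard geodesics.

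\emph{Step 1: paths.} For each $x\in B$, join $x$ to $x_c$ by a standard geodesic $\gamma_x$. It climbs vertically from $x$ to height $h_x\le\max\{\mathcal h(x),\mathcal h(x_c)\}+\ell(\gamma_x)\le \mathcal h(x_c)+2r$. It then moves horizontally at most $K$ steps and descends vertically to $x_c$. The whole path lies within distance $r$ of $x_c$, hence inside $B$, because a geodesic from $x_c$ to $x$ stays in the ball of radius $d(x,x_c)$. Concatenate $\gamma_x$ with a fixed path from $x_c$ to $z$ (a geodesic, hence contained in $B$). Then
\[
|f(x)-f(z)|\le\sum_{j}|f(v_j)-f(v_{j+1})|
\]
along a path of length at most $2r$. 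H\"older's inequality gives $|f(x)-f(z)|^p\le (2r)^{p-1}\sum_{v\in\text{path}}|\nabla f(v)|^p$.

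\emph{Step 2: Fubini and counting.} After summing against $\mu(x)$, the goal is to show that for each $v\in B$,
\[
\sum_{x\in B:\ v\in\text{path}(x)}\mu(x)\le C\,r\,\mu(v),
\]
which yields the factor $r^p$ overall. The common segment from $x_c$ to $z$ contributes $\mu(B)$ times a sum along a fixed path. This is the delicate term, so it is better to avoid using a single fixed vertex $z$. Instead, pair each $x$ with the top of its own ascent.

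Concretely, replace the reference by an average as follows. Write $|f(x)-f_B|\le \mu(B)^{-1}\sum_{y\in B}|f(x)-f(y)|\mu(y)$ and join $x$ to $y$ by a standard geodesic in $\Gamma(x,y;h)$. The pair $(x,y)$ then charges the vertices $v$ on the ascent from $x$, the horizontal part (at most $K$ vertices), and the descent to $y$. A vertex $v$ on the ascent satisfies $x\in s_n(v)$ for $n=\mathcal h(v)-\mathcal h(x)\le 2r$. By the quasi-flow bound \eqref{quasi-flow}, the $\mu$-mass of all such $x$ is at most $C(2r+1)\mu(v)$, one term $C\mu(v)$ for each admissible $n$. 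The weight in $y$ is at most $\mu(B)$, which cancels the normalisation $\mu(B)^{-1}$. The descent part is symmetric in $x$ and $y$.

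For a horizontal vertex $v$, the endpoint $p^{h-\mathcal h(x)}(x)$ lies within distance $K$ of $v$ at the same height. Bounded geometry and local doubling give $\mu(w)\simeq_K\mu(v)$ for each of the boundedly many such $w$, so the quasi-flow bound applied at each $w$ again gives $\sum_x\mu(x)\le Cr\mu(v)$.

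\emph{Main obstacle.} The hard step is keeping the paths inside $B$ while controlling their geometry. A standard geodesic from $x$ to $y$ lies in $B(x_c,3r)$, not in $B$. The right-hand side, however, only allows $B$, so some device is needed. I would try to route each path through $x_c$ (via $\gamma_x$ and $\gamma_y$ reversed, both inside $B$), which keeps it in $B$. The vertical parts still charge only ancestors of $x$ or of $y$, so the flow counting above survives. The remaining issue is that the horizontal part and the portion near $x_c$ are shared by many pairs; there, local doubling must compare $\mu(v)$ with the mass of its descendants within $B$. If this estimate fails, I would apply the argument on the enlarged ball $3B$, and then recover the strong form on $B$ by a Whitney-type chaining argument using the fact that $\mathcal{SW}$ is geodesic.
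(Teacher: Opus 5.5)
Your final pair argument is correct as far as it goes: you join $x$ to $y$ by a standard geodesic and charge the ascent, horizontal and descent vertices via the quasi-flow bound over at most $2r+1$ generations. But it only proves the weak inequality with $B_{\mathcal{SW}}(x_c,3r)$ on the right-hand side. Neither of your repairs gives the stated strong form.

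Routing through $x_c$ breaks the counting. The last leg of $\gamma_x$ descends the column $\{p^j(x_c)\}_{j\ge 0}$, whose vertices are ancestors of $x_c$, not of $x$. So your claim that the vertical parts ``charge only ancestors of $x$ or of $y$'' is false. In particular, $x_c$ is charged by every $x\in B$, i.e.\ by mass $\mu(B)$. For the flow measure $\mu(x)=2^{\mathcal h(x)}$ on the dyadic tree rooted at infinity, $\mu(B)\ge \mu(p^r(x_c))=2^r\mu(x_c)$, which is far larger than $r\mu(x_c)$.

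The Whitney-chain fallback does not close the gap either. The self-improvement from weak to strong Poincaré uses global doubling. Here $\mu$ is only locally doubling and has exponential growth, so the constants would not be uniform in $r$.

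The missing idea is that your first attempt was nearly right. For integer $r$, the unique vertex of maximal height in $B$ is $x_r=p^r(x_c)$, since reaching height $\mathcal h(x_c)+r$ in at most $r$ steps forces $r$ upward vertical edges. So take $c=f(x_r)$, but never descend to $x_c$. Let $x''$ and $x'$ be the endpoints of the horizontal component of $\gamma_x$, with $x'$ on the column above $x_c$. Follow $\gamma_x$ only from $x$ to $x'$ (ascent to $x''$, then at most $K$ horizontal steps), and then climb the column from $x'$ to $x_r$. This path stays in $B$ and has length at most $2r$.

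The key counting step is this. A column vertex $w=p^j(x')$ is charged only by those $x$ that are descendants of $p^j(x'')$. By \eqref{CMS}, applied to the horizontal segment from $x''$ to $x'$, the vertex $p^j(x'')$ is joined to $w$ by a horizontal path of length at most $K$. Bounded geometry, local doubling and the quasi-flow bound then give charged mass at most $Cr\mu(w)$, exactly as for your horizontal vertices. With Hölder's inequality this yields $C_p r^p$. This is the paper's argument; it symmetrises over pairs $(x,y)$ and routes both through $x_r$, which amounts to the same thing.
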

		\begin{proof}
	Assume $p \in [1, \infty)$. In this proof, we denote by 	$B_{r}$ the ball $B_\mathcal{SW}(x_c,r)$, for $r \geq 1$. We can assume without loss of generality that $r$ is integer.  By Jensen's inequality,
		\begin{align*}
				\sum_{x \in B_r} |f(x) - f_{B_r}|^p \mu(x)  &= \sum_{x \in B_r}  \abs{\sum_{y \in B_r} (f(x) - f(y)) \frac{\mu(y)}{\mu(B_r)} }^p \mu(x) \\
				&\leq \sum_{x \in B_r}  \sum_{y \in B_r} \abs{f(x) - f(y)}^p \frac{\mu(y)}{\mu(B_r)}  \mu(x).
		\end{align*} 
        Consider  standard geodesics $\gamma_x$ and $\gamma_y$ whose horizontal component has uniformly bounded length  connecting $x$ and $y$ to $x_c$, respectively. Let $L_0$ be a uniform upper bound for the length of the horizontal components of the chosen standard geodesics.
Denote by $x'$ and $y'$ the vertices of maximal height on $\gamma_x$ and $\gamma_y$ lying on the vertical geodesic $[x_c,x_r]$ where $x_r=p^r(x_c)$. Observe that, by geodesicity, both $\gamma_x$ and $\gamma_y$ are contained in $B_r$.\\
Let $\gamma_x''$ and $\gamma_y''$ be the subpaths of $\gamma_x$ and $\gamma_y$ joining $x$ to $x'$ and $y$ to $y'$, respectively. 
Define 
\[
\gamma'_x = \gamma_x'' \cup [x',x_r], 
\qquad 
\gamma'_y = \gamma_y'' \cup [y',x_r].
\]
Then $\gamma'_x \cup \gamma'_y$ is a path connecting $x$ to $y$ of length at most $4r$. Hence,
\begin{align}\label{trucco}
|f(x)-f(y)|^p 
&\le \left(\sum_{z \in \gamma'_x} |\nabla f(z)| + \sum_{z \in \gamma'_y} |\nabla f(z)| \right)^p \nonumber \\
&\le C_p\, r^{p/p'} \left( \sum_{z \in \gamma'_x} |\nabla f(z)|^p + \sum_{z \in \gamma'_y} |\nabla f(z)|^p \right),
\end{align} with the obvious modification when $p=1$.
By symmetry and \eqref{trucco}, it suffices to estimate
\[
C_p\, r^{p/p'} \sum_{x \in B_r} \sum_{z \in \gamma'_x} |\nabla f(z)|^p \, \mu(x).
\]
The path $\gamma'_x$ decomposes as
\[
\gamma'_x=[x,x'']\cup [x'',x']\cup [x',x_r],
\]
where $x''\in B_r$. The middle segment $[x'',x']$ is horizontal and has length at most $L_0$, by assumption. Moreover, the segments $[x,x'']$ and $[x',x_r]$ are vertical; see Figure~\ref{figure1}. Set $$B_H(x,R)=B_{\mathcal{SW}}(x,R) \cap \{y \in \mathcal{SW} \ : \ \mathcal{h}(y)=\mathcal{h}(x)\}$$
for every $x \in \mathcal{SW}$ and $R>0$. 
Observe that $z \in \gamma'_x$  only if one of the following holds:
\begin{itemize}
\item[(i)] $x \le z \in B_r$,
\item[(ii)] $z \in B_H(x',L_0) \cap B_r$,
\item[(iii)] $z \in [x',x_r]$.
\end{itemize}
Therefore,
\begin{align*}
C_p\, r^{p/p'} &\sum_{x \in B_r} \sum_{z \in \gamma'_x} |\nabla f(z)|^p \mu(x)
\\&\le C_p\, r^{p/p'} \sum_{x \in B_r}
\left(
\sum_{\substack{z \in B_r \\ x \le z}} |\nabla f(z)|^p
+ \sum_{y \in [x',x_r]} 
\sum_{\substack{z \in B_r \\ z \in B_H(y,L_0)}} |\nabla f(z)|^p
\right)\mu(x).
\end{align*} 
For every $y\in [x',x_r]$ and $u \in \mathcal{SW}$ we say that $u \le B_H(y,L_0)$ if there exists $z \in B_H(y,L_0)$ such that $u \le z$.
By Fubini's theorem,
\begin{align*}
\sum_{x \in B_r}
\Bigg(
\sum_{\substack{z \in B_r \\ z \ge x}} |\nabla f(z)|^p
&+ \sum_{y \in [x',x_r]} 
\sum_{\substack{z \in B_r \\ z \in B_H(y,L_0)}} |\nabla f(z)|^p
\Bigg)\mu(x) \\&\le \sum_{z \in B_r} |\nabla f(z)|^p 
\sum_{\substack{x \in B_r \\ x \le z}} \mu(x)
+\sum_{z \in B_r} |\nabla f(z)|^p\sum_{x \in B_r}\sum_{\substack{y \in[x',x_r] \\ y \in B_H(z,L_0)}} \mu(x) \\&\le \sum_{z \in B_r} |\nabla f(z)|^p 
\left(\sum_{B_r \ni x \le z} \mu(x)+ \sum_{y \in B_H(z,L_0)}\sum_{\substack{x \in B_r \\ x\le B_H(y,L_0)}} \mu(x) \right)\\&=\sum_{z \in B_r} |\nabla f(z)|^p 
\left(I_1(r,z)+I_2(r,z)\right).
\end{align*}
Since  $\mu$ is a quasi-flow, and a descendant of a point can remain inside a ball of radius $r$ for at most $2r$ generations, which equals the diameter of the ball, $I_1(r,z)\le Cr \mu(z)$. 
Similarly
$$\sum_{\substack{x \in B_r \\ x \le B_H(y,L_0)}}\mu(x) = \sum_{v \in B_H(y,L_0)}\sum_{\substack{x \in B_r \\ x \le v}} \mu(x) \le Cr\sum_{v \in B_H(y,L_0)} \mu(v).$$ By the local doubling property, there exists a constant
$C_{L_0}\geq 1$ such that for every $y \in \mathcal{SW}$
\begin{equation*}
    C_{L_0}^{-1}\mu(y)
    \leq \mu(v)
    \leq C_{L_0}\mu(y)
    \qquad
   \forall v\in B_H(y,L_0),
\end{equation*}
and $ \sup_{y\in \mathcal{SW}}\#B_H(y,L_0) \le C$ since $\mathcal{SW}$ has bounded geometry. This implies that $I_2(r,z)\le Cr\mu(z)$ and concludes the proof.
\begin{figure}
\centering
\begin{tikzpicture}[scale=0.7]

% ===== LIVELLI =====
\foreach \y in {0,1.5,3,4.5,6} {
  \draw[gray!25] (-4,\y) -- (2.5,\y);
}

% ===== ASSE CENTRALE =====
\draw[black, thick] (0,0) -- (0,6);

% ===== NODI PRINCIPALI =====
\node[circle,fill,inner sep=2pt,label=below:$x_c$] (xc) at (0,0) {};
\node[circle,fill,inner sep=2pt,label=left:$x$] (x) at (-3,3) {};
\node[circle,fill,inner sep=2pt,label=above:$x^{\prime\prime}$] (xpp) at (-1.2,4.5) {};
\node[circle,fill,inner sep=2pt,label=right:$x^{\prime}$] (xp) at (0,4.5) {};
\node[circle,fill,inner sep=2pt,label=above:$x_r$] (xr) at (0,6) {};

% ===== LIVELLO BASSO (più largo) =====
\foreach \X in {-3,-2,-1,-0.5} {
  \node[circle,fill,inner sep=1pt] at (\X,3) {};
}

% Connessioni tipo albero
\draw (-1,3) -- (-0.6,4.5);
\draw (-2,3) -- (-1.2,4.5);
\draw (-0.5,3) -- (-0.6,4.5);
% ===== LIVELLO INTERMEDIO (più stretto) =====
\foreach \X in {-1.2,-0.6,0} {
  \node[circle,fill,inner sep=1pt] at (\X,4.5) {};
}

% Resta SOLO la connessione verticale verso xr
\draw (0,4.5) -- (0,6);
%\draw (0,4.5) -- (0,7);
% =====  =====
\draw[red, very thick] (x) -- (xpp) -- (xp) -- (xc);

% ===== PROLUNGAMENTO =====
\draw[blue, very thick] (xp) -- (xr);

% ===== GAMMA'_x =====
\draw[blue!60!red, dashed, thick] (x) -- (xpp) -- (xp) -- (xr);

\end{tikzpicture}
\caption{A standard geodesic $\gamma_x$ (in red) connecting $x$ to $x_c$. 
The path first moves vertically to $x^{\prime\prime}$, then horizontally to $x^{\prime}$, and finally vertically to $x_c$. 
The dashed red path represents $\gamma_x''$, connecting $x$ to $x^{\prime}$. 
The extension $\gamma'_x = \gamma_x'' \cup [x^{\prime},x_r]$ is obtained by adding the blue segment.}\label{figure1}
\end{figure}

\end{proof}

\section{Approximation of Hyperbolic-type spaces by spiderwebs}\label{sec:approx}
Given a geodesic doubling metric measure space $(N,d_N,\mu_N)$, we consider the half--space $S = N \times \mathbb R_{+}$, equipped with the hyperbolic-type distance
\begin{equation}\label{eq:hyperbolic-formula}
 d_S\!\bigl((x,y),(x',y')\bigr)
 = C_N\,\mathrm{arcosh}\!\left(
 1+\frac{d_N(x,x')^{2} + (y-y')^{2}}{2yy'}\right) \qquad \forall (x,y),(x',y')\in S,
\end{equation}
where $C_N>0$ is a suitable normalisation depending on $N$.
We first prove that $d_S$ is indeed a distance.
\begin{proposition}\label{distance-rem}
The formula \eqref{eq:hyperbolic-formula} defines a metric on $S$. 
\end{proposition}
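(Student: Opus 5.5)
We will prove that the formula in \eqref{eq:hyperbolic-formula} is a metric by reducing it to the classical hyperbolic metric on the upper half-plane $\mathbb H^2=\{(s,t): s\in\mathbb R,\ t>0\}$. Since $C_N>0$ is only a scaling factor, we may assume $C_N=1$.

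Positivity and symmetry are immediate. The argument of $\operatorname{arcosh}$ is at least $1$, so $d_S\ge0$. If $d_S((x,y),(x',y'))=0$, then $d_N(x,x')^2+(y-y')^2=0$, hence $x=x'$ and $y=y'$. Symmetry follows from the symmetry of $d_N$ and of $(y-y')^2/(2yy')$.

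The substance is the triangle inequality. Fix three points $P_i=(x_i,y_i)$, $i=1,2,3$, in $S$, and set
\[
a=d_N(x_1,x_2),\qquad b=d_N(x_2,x_3),\qquad c=d_N(x_1,x_3),
\]
so that $c\le a+b$. For $u,v\in\mathbb R$ define
\[
\Phi(u;y,y')=\operatorname{arcosh}\Bigl(1+\frac{u^2+(y-y')^2}{2yy'}\Bigr),
\]
which is the hyperbolic distance in $\mathbb H^2$ between the points $(0,y)$ and $(u,y')$. The map $u\mapsto\Phi(u;y,y')$ is nondecreasing for $u\ge0$. We place the three points in $\mathbb H^2$ as $Q_1=(0,y_1)$, $Q_2=(a,y_2)$ and $Q_3=(a+b,y_3)$. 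Then
\[
d_{\mathbb H^2}(Q_1,Q_2)=d_S(P_1,P_2),\qquad d_{\mathbb H^2}(Q_2,Q_3)=d_S(P_2,P_3),
\]
and
\[
d_{\mathbb H^2}(Q_1,Q_3)=\Phi(a+b;y_1,y_3)\ge\Phi(c;y_1,y_3)=d_S(P_1,P_3).
\]
The triangle inequality for the genuine hyperbolic metric on $\mathbb H^2$ then gives
\[
d_S(P_1,P_3)\le d_{\mathbb H^2}(Q_1,Q_3)\le d_{\mathbb H^2}(Q_1,Q_2)+d_{\mathbb H^2}(Q_2,Q_3)=d_S(P_1,P_2)+d_S(P_2,P_3),
\]
which is the required inequality.

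The only delicate point is justifying that the formula gives the hyperbolic distance on $\mathbb H^2$. This is the standard closed-form expression for the Poincaré metric, with $C_N=1$, so it can be quoted from \cite{BH}. Beyond that, the argument uses only the monotonicity of $\operatorname{arcosh}$ and the triangle inequality in $N$. Note that neither the geodesic nor the doubling hypothesis on $N$ is needed here.
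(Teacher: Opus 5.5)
Your proof is correct and follows essentially the same route as the paper. Both arguments place the three points collinearly on a horizontal line of $\mathbb H^2$ so that two sides are realised exactly, use the triangle inequality in $N$ together with the monotonicity of $\Phi$ in its first variable to control the third side, and conclude with the triangle inequality for the hyperbolic metric.
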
 
 \begin{proof}
 The only nontrivial fact to verify is the triangle inequality. Consider three distinct points $p_1=(x_1,y_1),p_2=(x_2,y_2),$ and $p_3=(x_3,y_3).$ Then, set $D_{ij}=d_N(x_i,x_j)$ and for any nonnegative $\xi,s,t$ define $\Phi(\xi;s,t)=C_N\mathrm{arcosh}\left(1+\frac{\xi^2+(s-t)^2}{2st}\right),$ so that $d_S(p_i,p_j)=\Phi(D_{ij};y_i,y_j).$ Triangle inequality is equivalent to
\begin{align*}
    \Phi(D_{12};y_1,y_2) \le \Phi(D_{13};y_1,y_3)+\Phi(D_{32};y_3,y_2).
\end{align*} We have that $\Phi$ is increasing in the first variable, so by the triangle inequality on $N$
\begin{align*}
  \Phi(D_{12};y_1,y_2) \le \Phi(D_{13}+D_{32};y_1,y_2).   
\end{align*} Now consider on $\mathbb R \times \mathbb R_{+}$ the points $(a,y_1)$,$(b,y_2)$ and $(c,y_3)$ such that $a\ge c\ge b$ and $a-b=D_{13}+D_{32}\ge 0$, $a-c=D_{13}$, and $c-b=D_{32}$. Then, the triangle inequality on the hyperbolic upper half-plane $\mathbb H^2$ gives
\begin{align*}
  d_S(p_1,p_2) &\le \Phi(D_{13}+D_{32};y_1,y_2)\\&=C_Nd_{\mathbb H^2}\left((a,y_1),(b,y_2)\right) \\&\le  C_Nd_{\mathbb H^2}\left((a,y_1),(c,y_3)\right)+C_Nd_{\mathbb H^2}\left((c,y_3),(b,y_2)\right)\\&=\Phi(D_{13};y_1,y_3)+\Phi(D_{32};y_3,y_2)\\&=d_S(p_1,p_3)+d_S(p_3,p_2).
\end{align*}
     
 \end{proof}

We shall construct a discrete model of $S$ which is a multiscale graph $\mathcal{SW}_N$ built from a family of dyadic
cubes on $N$.   
The resulting graph captures, in a combinatorial framework, the same
multiscale horizontal--vertical geometry that characterizes $(S,d_S)$.

\medskip

The main result of this section shows that $\mathcal{SW}_N$ is 1--quasi-isometric to $S$.
The proof relies on the fact that $\mathcal{SW}_N$ is a spiderweb. Exploiting the spiderweb structure, we establish the existence of standard geodesics whose horizontal component has uniformly bounded length. Moreover, we obtain a quantitative description of the length of such geodesics between arbitrary vertices of the discretisation. This identifies a height at which the predecessors of two vertices lie at uniformly bounded horizontal distance (see Lemma \ref{lem:quantitative-compression}). Combined with the exact formula \eqref{eq:hyperbolic-formula}, this yields matching asymptotics for discrete and continuous distances (see Theorem \ref{thm:1QI}).

Notice that $\mathcal{SW}_N$ is not a priori Gromov hyperbolic. As a further consequence of the geometry of standard geodesics, we prove in Proposition \ref{gromov} that $\mathcal{SW}_N$ is indeed Gromov hyperbolic.

\subsection{Dyadic cubes on $N$}\label{sec:Christ-cubes}
Let $(N,{d_N}, \mu_N)$ be any geodesic doubling metric measure space.
Then, by Christ’s construction \cite{Christ1990Dyadic} and its refinements 
\cite{HytonenKairema2012,HytonenKairema2013},
there exists a system of dyadic cubes adapted to $(N,d_N,\mu_N)$ which we now briefly describe. A dyadic system $\mathscr D=\{\mathscr D_k\}_{k\in \mathbb Z}$ is a family of sets on $N$ such that
$\mathscr D_k=\{Q_\alpha^k\}_\alpha$  consists of the cubes at scale
$k$.
For each $k\in \mathbb Z$, the family $\mathscr D_k$ forms a partition of $N$, 
and the cubes are nested across scales: if $\ell>k$ and 
$Q\in\mathscr D_k$, then there exists a unique $P\in\mathscr D_\ell$ 
such that $Q\subset P$. We denote this ancestor by $Q^{(\ell-k)}$, 
and write $\widehat Q$ when $\ell-k=1$. 
Moreover, there exist constants $\eta>1$ and $c_1,c_2>0$ such that 
for every $Q\in\mathscr D_k$,
\begin{align}\label{inclusions}
B_N(c_Q, c_1\eta^k)\subset Q \subset B_N(c_Q, c_2\eta^k),
\end{align}
where $c_Q$ is a distinguished center of $Q$. In particular,
$\operatorname{diam}(Q)\simeq \eta^k$.
In general, the choice of a center may not be unique, so for each  cube $Q$, we fix once and for all one center $c_Q$ satisfying \eqref{inclusions}.
\subsection{The discrete multiscale graph $\mathcal{SW}_N$}\label{sec:SWN}

    Let $\mathscr D=\{ \mathscr D_k\}_{k \in \mathbb Z}$ be a fixed  dyadic system on $N$.
We now construct a multiscale graph $\mathcal{SW}_N$ which models 
$S = N\times\mathbb R_+$, equipped with \eqref{eq:hyperbolic-formula}.

For each $k\in\mathbb Z$ define
\[
V_k = \{\, z_{Q} : Q\in\mathscr D_k\,\},
\qquad
z_{Q} = (c_Q,\, \eta^k).
\]
Given a set $Q \in \mathscr D_k$ we define the heights
\begin{align}\label{heights}\mathcal h(Q)=k \qquad \text{and} \qquad \mathcal h(z_Q)=\mathcal h(Q).
\end{align}
The full vertex set of $\mathcal{SW}_N$ is $\mathcal V = \bigcup_{k\in\mathbb Z} V_k$. In order to introduce the set of edges of the graph, we need the following notion of enlarged dyadic cube.

\begin{definition}\label{def:1}
For a  dyadic cube $Q \in \mathscr D_k$ we define its enlarged version as
\[
\lambda Q = B_N\!\left(c_Q,\, \lambda\, \eta^{k}\right),
\]
where 
\begin{align}\label{lambda}\lambda >\max\left\{\frac{\eta}{\eta-1} c_2,2c_2+1\right\}
\end{align}is a fixed constant and $c_2$ is as in \eqref{inclusions}.

Two cubes $Q, R \in \mathscr D_k$ are called horizontally adjacent whenever
$\lambda Q \cap \lambda R \neq \emptyset$.
\end{definition}

From now on, whenever the parameter $\lambda$ appears in this section, it is understood to be chosen so that
$\lambda > \max\left\{\frac{\eta}{\eta-1} c_2,\, 2c_2+1\right\}.$
The following elementary lemma collects some consequences of the
dyadic construction and of the doubling property of $N$.
\begin{lemma}\label{removerlap} The following hold.
\begin{itemize}
\item[$i)$] For every $Q \in \mathscr D$\begin{align}\label{lambdaQ}
    \lambda Q \subset \lambda \widehat Q. 
\end{align} 
\item[$ii)$] There exists $C>0$ such that for every $k \in \mathbb Z$ and $R\in\mathscr D_k$  \[\#\{Q\in \mathscr D_{k-1}: Q\subset R\} \le C.\]
\item[$iii)$] There exists $C_\lambda>0$ such that for every $k \in \mathbb Z$ and $x \in N$, $$\#\{Q\in \mathscr D_k : x\in \lambda Q\}\le C_{\lambda}.$$
\end{itemize}
\end{lemma}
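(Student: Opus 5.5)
The three items are elementary consequences of the inclusions \eqref{inclusions}, the choice \eqref{lambda} of $\lambda$, and the doubling property of $\mu_N$. I would prove them in order.

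\emph{Item $i)$.} Let $Q\in\mathscr D_k$ and $\widehat Q\in\mathscr D_{k+1}$. Since $Q\subset\widehat Q\subset B_N(c_{\widehat Q},c_2\eta^{k+1})$ and $c_Q\in Q$, we have $d_N(c_Q,c_{\widehat Q})<c_2\eta^{k+1}$. For $x\in\lambda Q$ the triangle inequality then gives
\[
d_N(x,c_{\widehat Q})<\lambda\eta^k+c_2\eta^{k+1}.
\]
It therefore suffices that $\lambda\eta^k+c_2\eta^{k+1}\le\lambda\eta^{k+1}$, which is equivalent to $\lambda(\eta-1)\ge c_2\eta$, that is $\lambda\ge \frac{\eta}{\eta-1}c_2$. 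This is exactly guaranteed by \eqref{lambda}.

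\emph{Item $ii)$.} The children $Q\in\mathscr D_{k-1}$ of $R$ are pairwise disjoint. Each contains $B_N(c_Q,c_1\eta^{k-1})$, and all of these lie in $B_N(c_R,c_2\eta^k)$. Since $c_Q\in R$, each $B_N(c_Q,c_1\eta^{k-1})$ is contained in $B_N(c_Q,2c_2\eta^k)$, which contains $B_N(c_R,c_2\eta^k)$. Doubling, iterated a number of times depending only on $c_1,c_2,\eta$, gives
\[
\mu_N(B_N(c_Q,c_1\eta^{k-1}))\ge C^{-1}\mu_N(B_N(c_R,c_2\eta^k)).
\]
Summing over the disjoint children bounds their number by $C$.

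\emph{Item $iii)$.} Fix $x$ and $k$. If $x\in\lambda Q$ then $d_N(c_Q,x)<\lambda\eta^k$, so the disjoint balls $B_N(c_Q,c_1\eta^k)\subset Q$ are contained in $B_N(x,(\lambda+c_1)\eta^k)$. Each such ball also satisfies $B_N(c_Q,c_1\eta^k)\supset$ a ball comparable to $B_N(x,(\lambda+c_1)\eta^k)$ after doubling. More precisely, $B_N(x,(\lambda+c_1)\eta^k)\subset B_N(c_Q,(2\lambda+c_1)\eta^k)$, and doubling $B_N(c_Q,c_1\eta^k)$ a number of times depending on $\lambda/c_1$ yields
\[
\mu_N(B_N(c_Q,c_1\eta^k))\ge C_\lambda^{-1}\mu_N(B_N(x,(\lambda+c_1)\eta^k)).
\]
Summing over the disjoint balls gives the count bound $C_\lambda$.

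There is no real obstacle. The only point needing care is the exact threshold in $i)$, which is what \eqref{lambda} was designed for. In $ii)$ and $iii)$ one should keep track that the doubling constants are uniform in $k$, which holds because the radius ratios depend only on $c_1,c_2,\eta,\lambda$.
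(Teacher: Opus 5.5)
Your proof is correct and follows essentially the same route as the paper: the triangle inequality with the threshold $\lambda>\frac{\eta}{\eta-1}c_2$ for $i)$, and a doubling/packing argument for $ii)$ and $iii)$. The only cosmetic difference is that you pack the inner balls $B_N(c_Q,c_1\eta^{k-1})$ (resp.\ $B_N(c_Q,c_1\eta^{k})$) into a fixed ball. The paper instead compares $\mu_N(Q)$ directly with $\mu_N(R)$ (resp.\ with $\mu_N(B_N(x,(c_2+\lambda)\eta^k))$), using that the cubes are disjoint.
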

\begin{proof} 
Let $Q \in \mathscr D_k$ for some $k \in\mathbb Z$. Observe that $d_N(c_Q,c_{\widehat Q}) \le c_2 \eta^{k+1}$, so that if $x \in \lambda Q$
   \begin{align*} d_N(c_{\widehat Q},x) \le d_N(c_{\widehat Q},c_Q)+d_N(c_{Q},x) \le c_2 \eta^{k+1}+\lambda \eta^k < \lambda\eta^{k+1},
\end{align*} where we have used that $\lambda>\frac{\eta}{\eta-1}c_2.$ Hence $x \in \lambda \widehat Q$. This proves \eqref{lambdaQ}.

For every $k \in \mathbb Z$ and $R \in \mathscr D_k$ set
\[
\mathcal N (R)=\#\{Q\in \mathscr D_{k-1}: Q\subset R\}.\]
Using \eqref{inclusions}, for each child $Q\subset R$ we have $d_N(c_Q,c_{R}) \le c_2 \eta^k,$
and hence $R \subset B_N(c_Q,2c_2 \eta^{k}).$
By \eqref{inclusions} and the doubling condition, this implies that
\[
\mu_N(Q)\ge \mu_N(B_N(c_Q,c_1 \eta^{k-1})) 
\ge C \mu_N(R),
\]
for some constant $C>0$. 
Since the children of $R$ are disjoint and their union is $R$,
\[
\mu_N(R)= \sum_{Q\subset R} \mu_N(Q)
\ge C \mathcal N (R)\,\mu_N(R),
\]
and therefore $\mathcal N (R)\le C$ uniformly in $R$.
The bounded overlap of the enlarged cubes $\{\lambda Q : Q \in \mathscr D_k\}$ follows again from the doubling property of $(N,d_N, \mu_N)$. Indeed, fix $x\in N$.
If $x\in \lambda Q$ for some $Q\in \mathscr D_k$, then by \eqref{inclusions} $$B_N(c_Q,c_1 \eta^k)\subset Q\subset B_N(x,(c_2+\lambda)\eta^k)\subset B_N(c_Q,(c_2+2\lambda)\eta^k).$$   Moreover, by the doubling property
\begin{align*}
    \mu_N(B_N(x, (c_2+\lambda)\eta^k)) \le C_{\lambda} \mu_N(Q).
\end{align*}
Hence
\begin{align*}
\#\{Q\in \mathscr D_k : x\in \lambda Q\}\; C_{\lambda}^{-1}\mu_N(B_N(x,(c_2+\lambda)\eta^k))
   \;&\le\; \sum_{Q\in \mathscr D_k : x\in \lambda Q}\mu_N(Q)
   \;\\ &\le\; \mu_N\!\left(B_N(x,(c_2+\lambda)\,\eta^k)\right).
\end{align*}
We conclude that $$\#\{Q\in \mathscr D_k : x\in \lambda Q\}\; \le {C_{\lambda}},$$ and the constant appearing in the right-hand side does not depend on $x$ and $k$.
\end{proof}

We can finally define the edges of $\mathcal{SW}_N$. 
We say that two distinct vertices are neighbours and we write
$z_{Q} \sim z_{R}$ if and only if exactly one of the following occurs:
\begin{itemize}
    \item[$i)$]  $R=\widehat Q$ or  $Q=\widehat R$, (vertical connection); 
    \item[$ii)$] $\mathcal h(Q)=\mathcal h(R) $, $Q \ne R $, and  $\lambda Q\cap\lambda R\neq\emptyset$  \text{(horizontal connection).} 
\end{itemize}
Notice that $i)$ provides a predecessor map $p(z_Q)={z_{\widehat Q}}$, whereas $ii)$ introduces horizontal connections.

We shall need the following observation. For every
$k\in\mathbb Z$, let $\mathcal G_k$ denote the subgraph of
$\mathcal{SW}_N$ induced by the vertices $V_k$, that is, the
$k$-th layer of $\mathcal{SW}_N$ endowed only with the horizontal edges. Then $\mathcal G_k$ is connected. This
follows from the fact that $N$ is geodesic. Indeed, if $z_Q,z_R\in \mathcal{G}_k$, a geodesic joining $c_Q$ and $c_R$ intersects a chain of dyadic cubes at level $k$ whose consecutive elements are horizontally adjacent, yielding a path in $\mathcal{G}_k$. 
It follows that $\mathcal{SW}_N$ is connected.
\begin{remark}\label{rmk:sw}
Let $Q_1,Q_2\in \mathscr D_k$ and assume that $\lambda Q_1 \cap \lambda Q_2 \neq \emptyset.
$
Let $\widehat Q_1, \widehat Q_2\in \mathscr D_{k+1}$ be their parents.  
Then, \eqref{lambdaQ} implies that $\lambda Q_i\subset \lambda \widehat Q_i$ for $i=1,2$ and thus $\lambda \widehat Q_1 \cap \lambda \widehat Q_2 \ne \emptyset$.  

Hence, either $\widehat Q_1=\widehat Q_2$ and $z_{\widehat Q_1} = z_{\widehat Q_2}$ or $\widehat Q_1\ne\widehat Q_2$ and $z_{\widehat Q_1} \sim z_{\widehat Q_2}$.
This means that horizontal adjacency at height $k$ is either inherited by the parents at
height $k+1$ or the parents coincide. Consequently, $\mathcal{SW}_N$ defined with height function $\mathcal{h}(z_Q)=\mathcal{h}(Q)$ and $p(z_{Q})=z_{\widehat Q}$ is a spiderweb. 
\end{remark}

We highlight some geometric properties of $\mathcal{SW}_N$.

\begin{enumerate}

\item The correspondence
\[
z_Q\longleftrightarrow (c_Q,\eta^{\mathcal h(Q)})
\]
identifies each vertex with a point of $S$ whose vertical coordinate is determined by the height of the cube. 
\item The parent relation $Q\mapsto\widehat Q$ reflects the scaling structure of $S$: passing from $Q$ to $\widehat Q$ increases the vertical coordinate from $\eta^k$ to $\eta^{k+1}$ while simultaneously replacing a region of diameter comparable to $\eta^k$ by one of diameter  approximately $\eta^{k+1}$.

\item The set of vertices provides a uniformly separated and uniformly dense discretisation of $S$. Indeed, given two distinct vertices $z_{Q_1}$ and $z_{Q_2}$, assume that $\mathcal h(z_{Q_1})=k_1$ and $\mathcal h(z_{Q_2})=k_2$ and recall that
\[
d_S\bigl((c_{Q_1},\eta^{k_1}), (c_{Q_2},\eta^{k_2})\bigr)
=
C_N\,\mathrm{arcosh}\!\left(
1+
\frac{d_N(c_{Q_1},c_{Q_2})^2 + (\eta^{k_1}-\eta^{k_2})^2}{2\,\eta^{k_1+k_2}}
\right).
\]
If $k_1=k_2=k$, then $Q_1\cap Q_2=\emptyset$, and  \eqref{inclusions} gives
$d_N(c_{Q_1},c_{Q_2}) \ge c_1 \eta^k$, hence
\[
d_S\bigl((c_{Q_1},\eta^{k}), (c_{Q_2},\eta^{k})\bigr) \ge C_N\,\mathrm{arcosh}(1+c_1^2) > 0.
\]
If instead $k_1>k_2$, then
\begin{align*}
d_S\bigl((c_{Q_1},\eta^{k_1}), (c_{Q_2},\eta^{k_2})\bigr)&\ge C_N\,\mathrm{arcosh}\!\left(1+\frac{(\eta^{k_1}-\eta^{k_2})^2}{2\,\eta^{k_1+k_2}}\right)
\\&\ge C_N\,\mathrm{arcosh}(1+\tfrac{(\eta-1)^2}{2\eta}).
\end{align*}
Thus distinct vertices are uniformly separated.

Conversely, given any $(x,\eta^k)\in S$, choose $Q\in\mathscr D_k$ with $x\in Q$. Then $d_N(x,c_Q)\le c_2\eta^k$, and therefore
\[
d_S\bigl((x,\eta^k),(c_Q,\eta^k)\bigr)
\le C_N\,\mathrm{arcosh}(1+c_2^2).
\]
If $(x,a) \in S$ for some $a \in (\eta^k,\eta^{k+1}), $ then \begin{align*}d_S\left((x,a),(x,\eta^{k})\right)&=C_N\mathrm{arcosh}\left(1+\frac{(\eta^k-a)^2}{2a\eta^k}\right)\le C_N\mathrm{arcosh}\left(\frac{\eta^2+1}{2}\right),
\end{align*}

so that by triangle inequality $$d_S\bigl((x,a),(c_Q,\eta^k)\bigr)
\le C_N\,\left[\mathrm{arcosh}(1+c_2^2)+\mathrm{arcosh}\left(\frac{\eta^2+1}{2}\right)\right].$$

\item By Lemma \ref{removerlap} $ii)$ and $iii)$, each vertex in $\mathcal{SW}_N$ has uniformly bounded degree, with a bound depending only on the dyadic structure of $N$. 
\end{enumerate}
\begin{figure}[ht!]
\tdplotsetmaincoords{70}{120}

\begin{tikzpicture}[
tdplot_main_coords,
scale=1.4,
line join=round,
line cap=round
]

\def\zkm{0}
\def\zk{3}
\def\zkp{6}

%%%%%%%%%%%%%%%%%%%%%%%%%%%%%%%%%%%%%%%%%%%%%%%%%%
% LEVEL k+1
%%%%%%%%%%%%%%%%%%%%%%%%%%%%%%%%%%%%%%%%%%%%%%%%%%

\draw[blue,thick,fill=blue!5]
(-2,-2,\zkp)--(2,-2,\zkp)--(2,2,\zkp)--(-2,2,\zkp)--cycle;

\fill (0,0,\zkp) circle (2pt);

%%%%%%%%%%%%%%%%%%%%%%%%%%%%%%%%%%%%%%%%%%%%%%%%%%
% LEVEL k
%%%%%%%%%%%%%%%%%%%%%%%%%%%%%%%%%%%%%%%%%%%%%%%%%%

\foreach \x in {-2,0}{
  \foreach \y in {-2,0}{
    \draw[blue,thick,fill=blue!5]
    (\x,\y,\zk)--(\x+2,\y,\zk)--(\x+2,\y+2,\zk)--(\x,\y+2,\zk)--cycle;
  }
}

% centers
\fill (-1,-1,\zk) circle (1.8pt);
\fill ( 1,-1,\zk) circle (1.8pt);
\fill (-1, 1,\zk) circle (1.8pt);
\fill ( 1, 1,\zk) circle (1.8pt);

% horizontal spiderweb edges
\draw[red,thick] (-1,-1,\zk)--(1,-1,\zk);
\draw[red,thick] (-1, 1,\zk)--(1, 1,\zk);
\draw[red,thick] (-1,-1,\zk)--(-1,1,\zk);
\draw[red,thick] ( 1,-1,\zk)--( 1,1,\zk);

%%%%%%%%%%%%%%%%%%%%%%%%%%%%%%%%%%%%%%%%%%%%%%%%%%
% LEVEL k-1
% children of the cube centered at (-1,-1,k)
%%%%%%%%%%%%%%%%%%%%%%%%%%%%%%%%%%%%%%%%%%%%%%%%%%

\foreach \x in {-2,-1}{
  \foreach \y in {-2,-1}{
    \draw[gray!70,thick]
    (\x,\y,\zkm)--(\x+1,\y,\zkm)--(\x+1,\y+1,\zkm)--(\x,\y+1,\zkm)--cycle;
  }
}

% centers
\fill (-1.5,-1.5,\zkm) circle (1.5pt);
\fill (-0.5,-1.5,\zkm) circle (1.5pt);
\fill (-1.5,-0.5,\zkm) circle (1.5pt);
\fill (-0.5,-0.5,\zkm) circle (1.5pt);

% horizontal edges
\draw[red,thick] (-1.5,-1.5,\zkm)--(-0.5,-1.5,\zkm);
\draw[red,thick] (-1.5,-0.5,\zkm)--(-0.5,-0.5,\zkm);
\draw[red,thick] (-1.5,-1.5,\zkm)--(-1.5,-0.5,\zkm);
\draw[red,thick] (-0.5,-1.5,\zkm)--(-0.5,-0.5,\zkm);

%%%%%%%%%%%%%%%%%%%%%%%%%%%%%%%%%%%%%%%%%%%%%%%%%%
% vertical edges
%%%%%%%%%%%%%%%%%%%%%%%%%%%%%%%%%%%%%%%%%%%%%%%%%%

% k -> k+1
\foreach \x/\y in {-1/-1,1/-1,-1/1,1/1}{
   \draw[black] (\x,\y,\zk)--(0,0,\zkp);
}

% k-1 -> selected cube at level k
\foreach \x/\y in {
-1.5/-1.5,
-0.5/-1.5,
-1.5/-0.5,
-0.5/-0.5}{
   \draw[black] (\x,\y,\zkm)--(-1,-1,\zk);
}

%%%%%%%%%%%%%%%%%%%%%%%%%%%%%%%%%%%%%%%%%%%%%%%%%%
% labels
%%%%%%%%%%%%%%%%%%%%%%%%%%%%%%%%%%%%%%%%%%%%%%%%%%

\node[left] at (-3,0,\zkm) {$h=k-1$};
\node[left] at (-3,0,\zk) {$h=k$};
\node[left] at (-3,0,\zkp) {$h=k+1$};
\node[left] at (-0.5,0.7,\zkp) {$z_Q$};
\node[blue] at (0,2.6,\zkp) {$Q$};
\node[left] at (-1,-1.2,\zk) {$z_R$};
\node[blue] at (-1.7,-2.5,\zk) {$R$};
%%%%%%%%%%%%%%%%%%%%%%%%%%%%%%%%%%%%%%%%%%%%%%%%%%
% continuation
%%%%%%%%%%%%%%%%%%%%%%%%%%%%%%%%%%%%%%%%%%%%%%%%%%

%\node at (2.7,-1,\zk) {$\cdots$};
%\node at (-2.8,1,\zk) {$\cdots$};
\end{tikzpicture}
\caption{A local portion of the spiderweb $\mathcal{SW}_{\mathbb R^2}$
associated with the dyadic decomposition of $\mathbb R^2$. Vertices correspond
to dyadic squares, black edges encode the parent--child relation, and red edges
represent horizontal connections between neighbouring squares.}
\end{figure}
\subsection{Standard form of geodesics in $\mathcal{SW}_N$}

We now prove that every pair of vertices in $\mathcal{SW}_N$ can be joined by a special standard geodesic: a geodesic that first ascends vertically, then traverses a {\it uniformly bounded} horizontal segment at an appropriate height, and finally descends vertically. This is the analogue of the standard form of geodesics on hyperbolic rooted spiderwebs established in \cite{CMS}.
Let $\gamma = \{z_0,z_1,\dots,z_m\}$ be any path in $\mathcal{SW}_N$.
The  \emph{height of the path} $\gamma$ is defined as
\begin{align}\label{heightofapath}
h(\gamma) = \max_{0\le j\le m}\, \mathcal h(z_j).
\end{align}
\begin{lemma}
\label{lem:max-height-ancestors}
Given any two vertices $z_{Q}$ and $z_{R}$ in $\mathcal{SW}_N$ and any path $\gamma$ joining
them, if $h=h(\gamma)$, then there exist unique ancestors 
$Q_h\in\mathscr{D}_h$ and $R_h\in\mathscr{D}_h$ such that
\[
Q\subset Q_h,\qquad R\subset R_h.
\]
\end{lemma}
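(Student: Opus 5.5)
The statement is essentially a consequence of the nested structure of the dyadic system, so the plan is short. The first step is to check that $h=h(\gamma)\ge \max\{\mathcal h(Q),\mathcal h(R)\}$. Indeed, $z_Q$ and $z_R$ are vertices of $\gamma$, and by definition \eqref{heightofapath} $h$ is the maximum of $\mathcal h$ over the vertices of $\gamma$. Hence $h\ge \mathcal h(z_Q)=\mathcal h(Q)$ and $h\ge \mathcal h(R)$, using the convention \eqref{heights}. Note that $h$ is a finite integer, since a path has finitely many vertices.

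Next we invoke the nesting property of the dyadic system recalled in Subsection~\ref{sec:Christ-cubes}. Let $k=\mathcal h(Q)$. If $h=k$ we take $Q_h=Q$. If $h>k$, there is a unique $P\in\mathscr D_h$ with $Q\subset P$, namely $P=Q^{(h-k)}$, and we set $Q_h=P$. The same argument with $k'=\mathcal h(R)$ produces $R_h=R^{(h-k')}$. Equivalently, in graph terms, $z_{Q_h}=p^{\,h-\mathcal h(Q)}(z_Q)$ is the vertical ancestor of $z_Q$ at height $h$.

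For uniqueness, the case $h>k$ is exactly the uniqueness clause in the nesting property. In the case $h=k$, it follows because $\mathscr D_h$ is a partition of $N$. Cubes in $\mathscr D_h$ are pairwise disjoint and nonempty (they contain a ball by \eqref{inclusions}). So if $Q\subset P$ with $P\in\mathscr D_h$, then $P\cap Q\neq\emptyset$, which forces $P=Q$.

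There is no real obstacle here. The only point needing care is the trivial case $h=\mathcal h(Q)$, where uniqueness comes from the partition property rather than from the stated ancestor uniqueness. One should also note that the lemma does not claim the ancestors lie on $\gamma$; it only asserts their existence, which is all that is needed later to construct standard paths through $z_{Q_h}$ and $z_{R_h}$.
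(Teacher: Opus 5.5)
Your proof is correct and follows the same route as the paper: the ancestors at height $h$ are obtained from the unique-parent (nesting) property of the dyadic system, which the paper phrases as repeated application of the parent map. Your explicit checks that $h\ge\max\{\mathcal h(Q),\mathcal h(R)\}$, and your partition argument for the degenerate case $h=\mathcal h(Q)$, spell out details the paper leaves implicit.
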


\begin{proof}
Every vertical step in $\mathcal{SW}_N$ corresponds to moving from a cube to its parent,
which is unique by the property of the dyadic family. Thus repeated application of parents yields a unique ancestor at height $h$.
\end{proof}
The central idea underlying the construction of standard geodesics is the following: as one ascends, the cubes increase their diameter at a controlled rate. After finitely many steps (which we will quantify), the ancestors of two cubes become “large enough” to lie within a uniformly bounded horizontal graph distance. The argument relies only on the doubling property of $N$, the nesting of dyadic cubes, and the
uniform ball inclusion property \eqref{inclusions}.

\begin{definition} For every $k \in \mathbb Z$ and $x,y \in \mathcal{G}_k$ we set $$d_H(x,y)=d_{\mathcal{G}_k}(x,y),$$ and we refer to $d_H$ as ``horizontal distance'' since  it is the distance on a fixed layer. 
\end{definition}

We shall make use of the following fundamental property.

\begin{proposition}\label{quasi-iso-hor}
There exist positive constants $K_1,K_2$ such that, for every
$k\in\mathbb Z$ and every $Q,R\in\mathscr D_k$
\begin{align}\label{bilipsc}
K_1\, d_N(c_Q,c_{R}) \;\le\; \eta^k\, d_H(z_{Q},z_{R}) \;\le\; K_2\, d_N(c_Q,c_{R}).
\end{align}
In other words, $d_H$ and $d_N/\eta^k$ are bi-Lipschitz equivalent on $\mathcal{G}_k$.
\end{proposition}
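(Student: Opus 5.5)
The plan is to prove the two inequalities in \eqref{bilipsc} separately. The left one compares a horizontal path with a chain in $N$. The right one builds a horizontal path from a geodesic in $N$, which exists because $N$ is geodesic. If $Q=R$ both sides vanish, so assume $Q\neq R$. A preliminary observation used at the end is that $c_R\notin Q$, since $c_R\in R$ and $Q\cap R=\emptyset$. Hence, by \eqref{inclusions}, $d_N(c_Q,c_R)\ge c_1\eta^k$.

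\emph{Left inequality.} Take a shortest path $z_{Q_0}=z_Q, z_{Q_1},\dots,z_{Q_m}=z_R$ in $\mathcal G_k$, so that $m=d_H(z_Q,z_R)$. Consecutive cubes satisfy $\lambda Q_i\cap\lambda Q_{i+1}\neq\emptyset$. Picking a point in the intersection and using the definition $\lambda Q=B_N(c_Q,\lambda\eta^k)$ gives $d_N(c_{Q_i},c_{Q_{i+1}})<2\lambda\eta^k$. Summing along the path via the triangle inequality yields $d_N(c_Q,c_R)\le 2\lambda\eta^k\, d_H(z_Q,z_R)$. So one can take $K_1=1/(2\lambda)$.

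\emph{Right inequality.} Let $D=d_N(c_Q,c_R)$ and let $\sigma$ be a geodesic in $N$ from $c_Q$ to $c_R$. Set $n=\lceil D/\eta^k\rceil$ and choose points $x_0=c_Q,x_1,\dots,x_n=c_R$ on $\sigma$ with $d_N(x_i,x_{i+1})\le\eta^k$. For each $i$ let $P_i\in\mathscr D_k$ be the cube containing $x_i$. Since $x_0=c_Q\in Q$ and $x_n=c_R\in R$, we have $P_0=Q$ and $P_n=R$. By \eqref{inclusions}, $d_N(x_i,c_{P_i})\le c_2\eta^k$, and therefore
\[
d_N(c_{P_i},x_{i+1})\le (c_2+1)\eta^k<\lambda\eta^k,
\]
using $\lambda>2c_2+1$ from \eqref{lambda}. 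Thus $x_{i+1}\in\lambda P_i$. Also $x_{i+1}\in P_{i+1}\subset\lambda P_{i+1}$, again by \eqref{inclusions} and $\lambda>c_2$. So either $P_i=P_{i+1}$ or $z_{P_i}\sim z_{P_{i+1}}$ horizontally. After deleting repetitions we obtain a horizontal path from $z_Q$ to $z_R$ of length at most $n$, whence $d_H(z_Q,z_R)\le D/\eta^k+1$.

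The only delicate point is absorbing the additive constant $+1$ into a multiplicative one. This is exactly where the separation $D\ge c_1\eta^k$ enters: it gives $1\le D/(c_1\eta^k)$, and hence
\[
\eta^k d_H(z_Q,z_R)\le \bigl(1+c_1^{-1}\bigr)D .
\]
So $K_2=1+c_1^{-1}$ works. Both constants depend only on $\lambda$ and on the dyadic constants, not on $k$, $Q$ or $R$, as the statement requires.
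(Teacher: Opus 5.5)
Your proof is correct and follows the same strategy as the paper. The lower bound comes from summing center distances along a shortest horizontal path, and the upper bound from discretising a geodesic in $N$, taking the cubes containing the sample points, and absorbing the additive $+1$ via the separation $d_N(c_Q,c_R)\gtrsim \eta^k$ for $Q\neq R$. Your version is slightly streamlined: you use step size $\eta^k$, the common point $x_{i+1}\in\lambda P_i\cap\lambda P_{i+1}$ instead of the paper's center-distance and midpoint argument, and the direct bound $d_N(c_Q,c_R)\ge c_1\eta^k$ instead of its case distinction. The underlying argument is essentially the paper's.
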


\begin{proof}

Let $z_{Q} = z_{S_0} \sim z_{S_1} \sim \dots \sim z_{S_\ell} = z_{R}$
be a shortest path in $\mathcal{G}_k$. Then $d_H(z_{Q},z_{R}) = \ell$. By the triangle inequality, the fact that $\lambda S_i \cap \lambda S_{i+1} \ne \emptyset$ and that $(N,d_N)$ is geodesic, we get
\[
d_N(c_Q,c_{R}) \le \sum_{i=0}^{\ell-1} d_N(c_{S_i},c_{S_{i+1}}) \le 2\lambda \eta^k \ell,
\] with $\lambda$ as in \eqref{lambda}.
Thus
\[
\eta^k d_H(z_{Q},z_{R}) = \eta^k \ell \ge \frac{1}{2\lambda} d_N(c_Q,c_{R}),
\]
giving the lower bound in \eqref{bilipsc} with $K_1 = 1/(2\lambda)$.

\medskip

For the reverse inequality, let $x = c_Q$, $y = c_{R}$ for some $Q,R \in \mathscr D_k$ and set $D = d_N(x,y)$. 
\\ If $D<2c_1\eta^k$, let $\gamma$ be a geodesic joining $c_Q$ to $c_R$ and let
$p$ be its midpoint. Then
\[
d_N(p,c_Q)=d_N(p,c_R)=\frac{D}{2}<c_1\eta^k,
\]
so that
\[
p\in B_N(c_Q,c_1\eta^k)\cap B_N(c_R,c_1\eta^k).
\]
Since
\[
B_N(c_Q,c_1\eta^k)\subset Q,
\qquad
B_N(c_R,c_1\eta^k)\subset R,
\]
we obtain $Q\cap R\neq\emptyset$. As dyadic cubes at the same height form a
partition of $N$, it follows that $Q=R$, and therefore $z_Q=z_R$, $c_Q=c_R$, and $D=0$. Hence we may assume $D \ge 2 c_1 \eta^k$. \\
Let $\gamma : [0,D] \to N$ be a unit-speed geodesic from $x$ to $y$. Fix
\[
\delta \in (2c_2 \eta^k, \lambda \eta^k).
\]
 Set
\[
t_i = i \delta, \quad i = 0,\dots, m-1, \quad t_m = D, \quad m = \lfloor D/\delta \rfloor + 1.
\]
If $m \le 2$,  then $2c_1 \eta^k\le D<2\delta< 2\lambda \eta^k$ so that by geodesicity of $N$, the upper bound implies that $\lambda Q \cap \lambda R \ne \emptyset$ and thus $z_{Q} \sim z_{R}$. It follows that $d_H(z_{Q},z_{R})=1$ and  $$\frac{d_N(c_Q,c_R)}{2\lambda}\le \eta^k d_H(z_{Q},z_{R}) \le \frac{d_N(c_Q,c_R)}{2c_1},$$
which gives the upper bound in \eqref{bilipsc} with $K_2\ge \frac{1}{2c_1}$.

If $m>2$, then we define $x_i = \gamma(t_i)$, so that
\[
\delta=d_N(x_i,x_{i+1})  \quad \text{for all } i\le m-2 \qquad \text{and}\quad d_N(x_{m-1},x_m) \le \delta.
\]

For each $i\in \{0,...,m\}$, choose $Q_i \in \mathscr{D}_k$ such that $x_i \in Q_i$. Then by \eqref{inclusions}:
\[
d_N(c_{Q_i},x_i) \le c_2 \eta^k.
\]

Observe that $Q_i \neq Q_j$ for all distinct  $i,j \in \{0,...,m-1\}$. Indeed, otherwise:
\[
d_N(x_i,x_j) \le d_N(x_i,c_{Q_i}) + d_N(c_{Q_j},x_j) \le 2 c_2 \eta^k < |i-j| \delta = d_N(x_i,x_j),
\]
a contradiction. In particular, $Q_0 = Q$ and $Q_m = R$.

Next, for each $i$,
\begin{align*}
d_N(c_{Q_i},c_{Q_{i+1}}) &\le d_N(c_{Q_i},x_i) + d_N(x_i,x_{i+1}) + d_N(x_{i+1},c_{Q_{i+1}}) \\&\le 2c_2 \eta^k + \delta \\&< 2 \lambda \eta^k.
\end{align*}
By the adjacency condition of dyadic cubes and geodesicity of $d_N$, this implies $z_{Q_i} \sim z_{Q_{i+1}}$ in $\mathcal{G}_k$ for every $i=0,...,m-2$ (we cannot a priori exclude that $Q_{m-1}= Q_m$).

Hence the sequence $z_{Q_0},\dots,z_{Q_{m-1}}$ defines a horizontal path from $z_{Q}$ to $z_{Q_{m-1}}$ of length at most $m\le D/\delta +1$ and either $z_{R}\sim z_{Q_{m-1}}$ or $z_{R}=z_{Q_{m-1}}$. Using $\delta> 2c_2 \eta^k$, we get
\[
d_H(z_{Q},z_{R}) \le m \le \frac{D}{\eta^k} \cdot \frac{1}{2c_2} + 1\le \frac{D}{\eta^k2c_2} + \frac{D}{\eta^k2c_1}.
\]
Setting $K_2 = \frac{1}{2c_2}+ \frac{1}{2c_1}$ gives the desired upper bound in \eqref{bilipsc}.\\
Combining the two estimates, we obtain the bi-Lipschitz equivalence.
\end{proof}

One of the main tools to derive the standard form of geodesics is the fact that the horizontal distances shrink as we pass 
to higher predecessors in the dyadic hierarchy.  
This phenomenon is made precise in the following lemma.

\begin{lemma}
\label{lem:vertical-compression}
Let $Q,R\in\mathscr{D}_k$ and let $Q^{(1)},R^{(1)}\in\mathscr{D}_{k+1}$ be their
unique parents. Then
\begin{equation}
\label{eq:vertical-compression}
\frac{d_N\!\bigl(c_{Q^{(1)}},c_{R^{(1)}}\bigr)}{\eta^{\,k+1}}
\;\le\;
\frac1\eta\,\frac{d_N\!\bigl(c_Q,c_{R}\bigr)}{\eta^{\,k}}
\;+\; C_0,
\end{equation}
where $C_0 = 2c_2$.
Consequently, for every $m\ge 1$,
\begin{equation}
\label{eq:vertical-compression-iterated}
\frac{d_N\!\bigl(c_{Q^{(m)}},c_{R^{(m)}}\bigr)}{\eta^{\,k+m}}
\;\le\;
\eta^{-m}\,\frac{d_N\!\bigl(c_Q,c_{R}\bigr)}{\eta^{\,k}}
\;+\; \frac{\eta}{\eta-1}C_0.
\end{equation}
In particular,   \begin{align}\label{stella}
d_H({z_{Q^{(m})}},z_{R^{(m)}}) \le \frac{K_2}{K_1}\frac{d_H(z_{Q},z_{R})}{\eta^m}+K_2\frac{\eta}{\eta-1}C_0,
\end{align} with $K_1$ and $K_2$ as in Proposition \ref{quasi-iso-hor}.
\end{lemma}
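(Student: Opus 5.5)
The plan has three steps: prove the one-step estimate \eqref{eq:vertical-compression} with the triangle inequality in $N$, iterate it, and then translate to horizontal graph distances with Proposition \ref{quasi-iso-hor}.

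\emph{One-step estimate.} Since $Q\subset Q^{(1)}$, the center $c_Q$ lies in $Q^{(1)}$. By \eqref{inclusions} applied to $Q^{(1)}\in\mathscr D_{k+1}$ this gives $d_N(c_Q,c_{Q^{(1)}})\le c_2\eta^{k+1}$, and likewise $d_N(c_R,c_{R^{(1)}})\le c_2\eta^{k+1}$. The triangle inequality then yields
\[
d_N(c_{Q^{(1)}},c_{R^{(1)}})\le d_N(c_Q,c_R)+2c_2\eta^{k+1}.
\]
Dividing by $\eta^{k+1}$ gives \eqref{eq:vertical-compression} with $C_0=2c_2$.

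\emph{Iteration.} Set $a_j=d_N(c_{Q^{(j)}},c_{R^{(j)}})/\eta^{k+j}$. Applying the one-step estimate to the cubes $Q^{(j)},R^{(j)}\in\mathscr D_{k+j}$ gives the recursion $a_{j+1}\le a_j/\eta+C_0$. By induction,
\[
a_m\le \eta^{-m}a_0+C_0\sum_{i=0}^{m-1}\eta^{-i}\le \eta^{-m}a_0+\frac{\eta}{\eta-1}C_0,
\]
which is \eqref{eq:vertical-compression-iterated}.

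\emph{Translation to $d_H$.} The upper bound in Proposition \ref{quasi-iso-hor}, applied at level $k+m$, gives $d_H(z_{Q^{(m)}},z_{R^{(m)}})\le K_2\,a_m$. The lower bound at level $k$ gives $a_0=d_N(c_Q,c_R)/\eta^k\le d_H(z_Q,z_R)/K_1$. Substituting both into \eqref{eq:vertical-compression-iterated} produces \eqref{stella}.

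None of these steps is a real obstacle; the argument is a direct computation. The only points needing care are that $c_Q\in Q^{(1)}$, which follows from nestedness together with $c_Q\in Q$ from the left inclusion in \eqref{inclusions}, and that Proposition \ref{quasi-iso-hor} is applied at the correct level in each direction.
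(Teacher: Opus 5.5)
Your proposal is correct and follows the paper's proof essentially verbatim: the triangle inequality through the centers using \eqref{inclusions}, then iteration summed as a geometric series. You also spell out how \eqref{stella} follows from Proposition~\ref{quasi-iso-hor}, using the upper bound at level $k+m$ and the lower bound at level $k$, a step the paper leaves implicit.
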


\begin{proof}
By \eqref{inclusions},
\[
d_N\!\bigl(c_Q,c_{Q^{(1)}}\bigr)\;\le\;c_2\,\eta^{\,k+1}, 
\qquad
d_N\!\bigl(c_{R},c_{R^{(1)}}\bigr)\;\le\;c_2\,\eta^{\,k+1}.
\]
Hence by the triangle inequality,
\[
d_N\!\bigl(c_{Q^{(1)}},c_{R^{(1)}}\bigr)
\;\le\;
d_N\!\bigl(c_Q,c_{R}\bigr)
\,+\,
2c_2\,\eta^{\,k+1}.
\]
Dividing both sides by $\eta^{k+1}$,
\[
\frac{d_N(c_{Q^{(1)}},c_{R^{(1)}})}{\eta^{\,k+1}}
\;\le\;
\frac{1}{\eta}\frac{d_N(c_Q,c_{R})}{\eta^{\,k}}
\,+\,
2c_2.
\]
which proves \eqref{eq:vertical-compression} with $C_0=2c_2$. Iterating \eqref{eq:vertical-compression} gives
\[
\frac{d_N(c_{Q^{(m)}},c_{R^{(m)}})}{\eta^{k+m}}
\;\le\;
\eta^{-m}\frac{d_N(c_Q,c_{R})}{\eta^{k}}
\,+\,
C_0\,(1+\eta^{-1}+\eta^{-2}+\cdots),
\]
which yields \eqref{eq:vertical-compression-iterated}.
\end{proof}

Recall that for
vertices $z_Q,z_R\in \mathcal{SW}_N$ and
$h\ge \max\{\mathcal h(Q),\mathcal h(R)\}$, we denote by $\Gamma(z_Q,z_R;h)$
the family of paths obtained by
\begin{itemize}
\item[(i)] ascending vertically from $z_Q$ to its ancestor
$z_{Q^{(h-\mathcal h(Q))}}$ at height $h$;

\item[(ii)] following a horizontal path in $\mathcal{G}_h$ joining
$z_{Q^{(h-\mathcal h(Q))}}$ and
$z_{R^{(h-\mathcal h(R))}}$;

\item[(iii)] descending vertically from
$z_{R^{(h-\mathcal h(R))}}$ to $z_R$.
\end{itemize}

We now have all the ingredients to prove the existence of standard geodesics.

\begin{theorem}\label{thm:normal-form}
Let $z_{Q}$ and $z_{R}$ be vertices of $\mathcal{SW}_N$. 
Then for every geodesic $\gamma$ joining them there is a standard geodesic
$\gamma' \in \Gamma(z_Q,z_R;h)$ such that:
\begin{enumerate}
\item $h = h(\gamma)$ is the maximal height reached by the geodesic $\gamma$,
\item  if $Q_h,R_h \in \mathscr{D}_h$ are the ancestors of $Q,R$ at height $h$, the horizontal segment of $\gamma'$ satisfies
\begin{align}\label{hor:length}
d_H(z_{Q_h},z_{R_h}) \le K_N,
\end{align}
for a constant $K_N$ depending only on $N$.
\end{enumerate}
\end{theorem}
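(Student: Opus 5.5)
Given a geodesic $\gamma$ from $z_Q$ to $z_R$ with maximal height $h=h(\gamma)$, I will push $\gamma$ up to height $h$ via the predecessor map and then use Lemma~\ref{lem:vertical-compression} to bound the horizontal part.

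\emph{Step 1 (projection).} Let $Q_h,R_h$ be the ancestors given by Lemma~\ref{lem:max-height-ancestors}. Apply $p^{h-\mathcal h(z_j)}$ to each vertex $z_j$ of $\gamma$. By Remark~\ref{rmk:sw} and property \eqref{CMS}, a horizontal edge of $\gamma$ at height $k\le h$ is sent to either a single vertex or a horizontal edge in $\mathcal G_h$. A vertical edge of $\gamma$ is sent to a single vertex, since both endpoints have the same ancestor at height $h$. Hence the projected sequence, after deleting repetitions, is a horizontal path in $\mathcal G_h$ from $z_{Q_h}$ to $z_{R_h}$. Its length is at most the number $H$ of horizontal edges of $\gamma$. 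Let $a=h-\mathcal h(Q)$ and $b=h-\mathcal h(R)$. Since $\gamma$ must climb from height $\mathcal h(Q)$ to $h$ and come back down to $\mathcal h(R)$, it has at least $a+b$ vertical edges. Therefore $\ell(\gamma)\ge a+b+H$. Concatenating the vertical ascent, the projected horizontal path and the vertical descent gives a path in $\Gamma(z_Q,z_R;h)$ of length at most $a+b+H\le \ell(\gamma)$. So it is a geodesic $\gamma'$, and its horizontal part realises $d_H(z_{Q_h},z_{R_h})$. This proves (1).

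\emph{Step 2 (bounding the horizontal segment).} Set $D=d_H(z_{Q_h},z_{R_h})$. Since $\gamma'$ is geodesic, any competitor path gives a lower bound on $a+b+D$. Take the competitor that climbs $m$ further levels, for $m\ge1$ to be chosen: go up from $z_{Q_h}$ to $z_{Q_h^{(m)}}$, cross horizontally at height $h+m$, and descend to $z_{R_h}$. Its length is $a+b+2m+d_H(z_{Q_h^{(m)}},z_{R_h^{(m)}})$. Estimate \eqref{stella} then gives
\[
D\le 2m+\frac{K_2}{K_1}\frac{D}{\eta^m}+K_2\frac{\eta}{\eta-1}C_0 .
\]
Choose $m$ once and for all so that $\frac{K_2}{K_1}\eta^{-m}\le\frac12$; this $m$ depends only on $N$. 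Absorbing the $D$ term on the right yields
\[
D\le 4m+2K_2\frac{\eta}{\eta-1}C_0=:K_N,
\]
which is \eqref{hor:length}.

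\emph{Main obstacle.} The delicate part is Step~1: checking that projecting a vertical edge of $\gamma$ never produces an extra edge, and that projecting horizontal edges taken at different heights still yields a connected horizontal path. Both facts follow from iterating Remark~\ref{rmk:sw}, but the bookkeeping needs care. Step~2 is straightforward once \eqref{stella} is available.
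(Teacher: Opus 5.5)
Your proposal is correct and follows essentially the paper's argument. Both proofs project $\gamma$ to height $h$ via the spiderweb property to produce $\gamma'$, and both bound the horizontal segment with a competitor path that climbs $m$ extra levels, controlled by vertical compression. The only difference is order: you first show that $\gamma'$ is a geodesic, which gives the exact length $a+b+D$, and then bound $D$ directly via \eqref{stella}. The paper instead first bounds $E_h$, working in $d_N$, against $\ell(\gamma)$, and then converts that bound to $d_H$.
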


\begin{proof}
We distinguish two cases. If $Q \cap R \ne \emptyset$, then either $Q \subset R$ or $R \subset Q$. In this case the geodesic connecting $z_{Q}$ and $z_{R}$ is unique and it coincides with the vertical geodesic connecting the two vertices (this corresponds to a degenerate case of the statement, in which the horizontal segment is trivial and there is no descending part).

Now assume that $Q \cap R = \emptyset$. Set $k=\mathcal h(Q)$ and $\ell= \mathcal h(R)$.
Let $\gamma$ be a geodesic joining $z_{Q}$ and $z_{R}$, and let $h=h(\gamma)$ be its height defined as in \eqref{heightofapath}.  
By Lemma~\ref{lem:max-height-ancestors}, $\gamma$ determines unique ancestors 
$Q_h,R_h \in \mathscr{D}_h$ of $Q$ and $R$, respectively.

\medskip

\noindent

By Lemma~\ref{lem:vertical-compression}, for every $m\ge 1$ we have
\begin{equation}\label{eq:metric-compression}
\frac{d_N\left(c_{Q_{h}^{(m)}},c_{R_{h}^{(m)}}\right)}{\eta^{h+m}}
\;\le\;
\eta^{-m}\,\frac{d_N(c_{Q_{h}},c_{R_{h}})}{\eta^h}
\;+\; C.
\end{equation}
\noindent
Set
\[
E_h = \frac{d_N(c_{Q_{h}},c_{R_{h}})}{\eta^h}, \qquad E_{h+m} = \frac{d_N(c_{Q_{h}^{(m)}},c_{R_{h}^{(m)}})}{\eta^{h+m}}\quad \forall m\ge1.
\]
We claim that $E_h$ must be bounded by a constant depending only on $N$.

Indeed, consider the path obtained by modifying $\gamma$ as follows:
\begin{itemize}
\item[i')] from $z_{Q}$ go vertically up to height $h+m$,
\item[ii')] move horizontally from $z_{Q_{h}^{(m)}}$ to $z_{R_{h}^{(m)}}$,
\item[iii')] descend to $z_{R}$.
\end{itemize}

This new path has $2m+h-k+h-\ell$  vertical edges and  horizontal length bounded above by
\(
\displaystyle K_2E_{h+m}
\) by Proposition \ref{quasi-iso-hor}.
Using \eqref{eq:metric-compression}, we deduce
\[
E_{h+m}\le \eta^{-m} E_h + C.
\]

On the other hand, we can easily estimate the length of $\gamma$ from below. Indeed, by the spiderweb property of horizontal paths, by decomposing the horizontal component of $\gamma$ into maximal horizontal
subpaths, that is, maximal subpaths consisting of consecutive vertices lying at the same height and joined by horizontal edges, and applying \eqref{CMS} to each of them we obtain that the total
horizontal length of $\gamma$ is bounded from below by the length of its
projection onto height $h$, that in turn is at least $K_1 E_h$ by Proposition~\ref{quasi-iso-hor}. The vertical component of $\gamma$ has length at least $h-k+h-\ell$. Hence the difference in length between the modified path and $\gamma$ is at most
\[
2m + K_2(\eta^{-m}E_h + C) - K_1 E_h,
\]
where $K_1$ and $K_2$ are as in Proposition \ref{quasi-iso-hor}. Fix $m \ge \log_\eta (2K_2/K_1)$. Then
\begin{align*}
   2m + K_2(\eta^{-m}E_h + C) - K_1 E_h\le 2m -K_1/2E_h + K_2C,
\end{align*}
and the above quantity is negative if $E_h$ is bigger than a constant depending only on $K_1,K_2$ and $N$.
 This would contradict the geodesicity of $\gamma$  and therefore $E_h$ is uniformly bounded.

\medskip

\noindent

By Proposition~\ref{quasi-iso-hor}, we have
\[
d_H(z_{Q_h},z_{R_h})
\;\le\;
K_2\, \frac{d_N(c_{Q_{h}},c_{R_{h}})}{\eta^h}
\;=\;
K_2\, E_h.
\]
Since $E_h$ is bounded, this yields \eqref{hor:length} for some constant depending on $N$.

\medskip

\noindent

Finally, we prove that $\gamma'$ is a geodesic. We decompose $\gamma$ into maximal horizontal subpaths. For each horizontal subpath $\sigma$, let $\pi_h(\sigma)$ denote the path obtained by replacing every vertex of $\sigma$ with its ancestor at height $h$. By the spiderweb property and \eqref{CMS}, $\pi_h(\sigma)$ is again a horizontal path and its length is not larger than that of $\sigma$. The endpoints of every horizontal subpath are joined in $\gamma$ by vertical segments. Since $\pi_h$ is constant along each vertical segment, projected consecutive horizontal subpaths have matching endpoints. Concatenating these projected horizontal subpaths with the initial and
final vertical segments from $z_Q$ to $z_{Q_h}$ and from $z_R$ to
$z_{R_h}$ gives a path $\gamma'\in\Gamma(z_Q,z_R;h)$ joining $z_Q$ to $z_R$.

The total horizontal length of $\gamma'$ is therefore at most that of $\gamma$,
and the same holds for the vertical components. Hence
$\ell(\gamma')\le \ell(\gamma)$. Since $\gamma$ is a geodesic, equality must
hold, and therefore $\gamma'$ is itself a geodesic.
\end{proof}
We record one further consequence of Theorem~\ref{thm:normal-form}, which will be
used later in the proof of the Gromov hyperbolicity of
$\mathcal{SW}_N$.\\
For $A\subset\mathcal{SW}_N$ and $C>0$, we set
\begin{align}\label{intorno}
    \mathcal N_C(A)
    =
    \{v\in\mathcal{SW}_N:
      d_{\mathcal{SW}_N}(v,A)\leq C\}.
      \end{align}

\begin{lemma}
\label{lem:standard-geodesic-approximation}
There is a constant $C>0$ such that for every pair of vertices $x,y\in\mathcal{SW}_N$ and every geodesic
$\gamma$ joining $x$ to $y$, there exists a standard geodesic
$\gamma'$ joining the same endpoints such that
\[
    \gamma\subset \mathcal N_C(\gamma')
    \qquad\text{and}\qquad
    \gamma'\subset \mathcal N_C(\gamma).
\]
In particular, $\gamma$ and $\gamma'$ are at Hausdorff distance at
most $C$.
\end{lemma}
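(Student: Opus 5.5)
Take $\gamma'$ to be the standard geodesic in $\Gamma(x,y;h)$, $h=h(\gamma)$, built in the proof of Theorem~\ref{thm:normal-form}. Its horizontal segment joins $z_{Q_h}$ and $z_{R_h}$ and, by \eqref{hor:length}, has length at most $K_N$. The degenerate case $Q\cap R\neq\emptyset$ is immediate, since then the geodesic is unique and $\gamma=\gamma'$. So assume $Q\cap R=\emptyset$.

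The first step is to control the vertical structure of $\gamma$. Let $\gamma$ have $V$ vertical edges and $H$ horizontal edges. Since $\gamma$ starts at height $k$, ends at height $\ell$ and reaches height $h$, we have $V\ge (h-k)+(h-\ell)$. Since $\gamma'$ is also a geodesic, $\ell(\gamma)=\ell(\gamma')\le (h-k)+(h-\ell)+K_N$. Hence $H\le K_N$ and $V\le (h-k)+(h-\ell)+K_N$. Thus the vertical moves of $\gamma$ fail to be monotone (going up until height $h$, then down) by at most $K_N$ backtracking steps, and there are at most $K_N$ horizontal edges in total.

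The second step is a vertex-by-vertex comparison. Parametrize both paths by arclength, $\gamma(t)$ and $\gamma'(t)$ for $t\in[0,L]$. I claim $d(\gamma(t),\gamma'(t))\le C(K_N)$. Project each vertex $\gamma(t)$ onto the vertical chain from $x$ upward, or from $y$ upward. Each horizontal edge of $\gamma$ moves the current vertex to a vertex whose ancestors stay at horizontal distance at most $1$ from the corresponding ancestors of the previous vertex; this follows by iterating Remark~\ref{rmk:sw} via \eqref{CMS}.

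Consequently, before $\gamma$ has taken all its horizontal steps, the vertex $\gamma(t)$ lies within horizontal distance at most (number of horizontal steps so far) $\le K_N$ of the ancestor of $x$ at the same height. Passing to $\gamma'$ at the same height is then achieved by that horizontal move. Since the heights of $\gamma(t)$ and $\gamma'(t)$ differ by at most $2K_N$ (the backtracking plus horizontal counts), we get the bound $d(\gamma(t),\gamma'(t))\le 3K_N+\text{const}$.

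Symmetric reasoning from the endpoint $y$ handles the descending part. Near height $h$, both paths stay within $K_N$ of $z_{Q_h}$ and $z_{R_h}$, and these two vertices are themselves within $K_N$ of each other.

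The main obstacle is to make the projection argument rigorous. It requires that horizontally adjacent vertices at height $j$ have ancestors at every height $\ge j$ that are adjacent or equal, which is exactly Remark~\ref{rmk:sw}. One must also track, via the counting above, that at each time the vertex $\gamma(t)$ is a descendant-neighbour of the appropriate vertical chain. Given that, the uniform pointwise bound yields both inclusions $\gamma\subset\mathcal N_C(\gamma')$ and $\gamma'\subset\mathcal N_C(\gamma)$, with $C$ depending only on $K_N$.
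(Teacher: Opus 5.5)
Your choice of $\gamma'$ and the bound $H(\gamma)\le K_N$ are correct, but the step you call the ``main obstacle'' actually fails as written. Your counting only gives $V(\gamma)\le 2h-\mathcal h(x)-\mathcal h(y)+K_N$. So you allow up to about $K_N/2$ downward edges before $\gamma$ first reaches height $h$, and upward edges afterwards. Your invariant says $\gamma(t)$ lies at horizontal distance at most (number of horizontal edges used so far) from the ancestor of $x$ at the same height. Horizontal edges preserve it (Remark~\ref{rmk:sw} and \eqref{CMS}), and upward edges preserve it trivially. Downward edges do not: the path $x\to p(x)\to x'$, with $x'\neq x$ another child of $p(x)$, uses no horizontal edge, yet $x'$ is not the ancestor of $x$ at its height. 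In fact $x'$ need not even be horizontally adjacent to $x$. So the pointwise bound $d(\gamma(t),\gamma'(t))\le C$ is not established, and the condition that $\gamma(t)$ be a ``descendant-neighbour of the appropriate vertical chain'' is left unproved.

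The missing idea is to use the full output of the construction in Theorem~\ref{thm:normal-form}, not just \eqref{hor:length}. There $\gamma'$ is obtained by projecting the horizontal subpaths of $\gamma$ to height $h$, so $H(\gamma')\le H(\gamma)$ and $V(\gamma')\le V(\gamma)$, where $H,V$ count horizontal and vertical edges. Since $\ell(\gamma)=\ell(\gamma')$, both inequalities are equalities, and hence $V(\gamma)=2h-\mathcal h(x)-\mathcal h(y)$. Let $U$ and $D$ be the numbers of upward and downward edges of $\gamma$. They satisfy $U\ge h-\mathcal h(x)$, $D\ge h-\mathcal h(y)$ and $U+D=V(\gamma)$, which forces $U=h-\mathcal h(x)$ and $D=h-\mathcal h(y)$.

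So $\gamma$ has no backtracking at all: it moves only up and sideways until it first reaches height $h$, and only down and sideways afterwards. With this, your projection invariant holds verbatim. It gives $d_{\mathcal{SW}_N}(z,p^{\mathcal h(z)-\mathcal h(x)}(x))\le H(\gamma)\le K_N$ for every $z$ on the ascending part, and symmetrically on the descending part. Every height between $\mathcal h(x)$ and $h$ (and between $\mathcal h(y)$ and $h$) is attained by $\gamma$, which gives the reverse inclusion. This is exactly the paper's argument.

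Alternatively, you could keep bounded backtracking and track graph distance instead of horizontal distance. This uses that $p$ is $1$-Lipschitz for $d_{\mathcal{SW}_N}$: by property (3) of Definition~\ref{def:sw} for horizontal edges, and trivially for vertical ones. Then each downward step costs at most $2$, but this bookkeeping would also have to be written out. Note finally that the synchronous parametrised bound is more than you need; the two set inclusions suffice.
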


\begin{proof}
    Let $\gamma$ be a geodesic joining $x$ to $y$ and set $h=h(\gamma)$.
By Theorem~\ref{thm:normal-form}, there exists a standard geodesic
$\gamma'\in\Gamma(x,y;h)$, constructed as in its proof, whose
horizontal component has uniformly bounded length. For any path $\sigma$, denote by $H(\sigma)$ and $V(\sigma)$ the total numbers of horizontal and vertical edges of $\sigma$, respectively. In the construction given in the proof of Theorem \ref{thm:normal-form}, one has $H(\gamma')\le H(\gamma)$ and $V(\gamma')\le V(\gamma)$.
Since both $\gamma$ and $\gamma'$ are geodesics joining the same endpoints, they have the same length. Therefore, the two inequalities above must in fact be equalities:
\[
H(\gamma')=H(\gamma),
\qquad
V(\gamma')=V(\gamma).
\]
Consequently, the total number of horizontal edges of $\gamma$ is uniformly bounded. Moreover,
\[
V(\gamma)= V(\gamma')=
h-\mathcal h(x)+h-\mathcal h(y).
\]
Let $U$ and $D$ denote respectively the numbers of upward and downward vertical edges of $\gamma$, when $\gamma$ is oriented from $x$ to $y$. Since $\gamma$ reaches height $h$, we necessarily have
\[
U\ge h-\mathcal h(x),
\qquad
D\ge h-\mathcal h(y).
\]
Since
\[
U+D
= V(\gamma)=h-\mathcal h(x)+h-\mathcal h(y),
\]
both inequalities must be equalities, thus
$U=h-\mathcal h(x)$ and $D=h-\mathcal h(y)$.
Consequently, no downward vertical edge can occur before $\gamma$ first reaches height $h$, and no upward vertical edge can occur afterwards.

Let $z$ be a vertex of the ascending part of $\gamma$, with $\mathcal h(z)=k$. Projecting to height $k$ all the horizontal edges occurring in the portion of $\gamma$ between $x$ and $z$, and using the spiderweb property, we obtain a horizontal path joining $z$ and 
$p^{k-\mathcal h(x)}(x)$,
whose length is bounded by the total number of horizontal edges of $\gamma$. Hence
$d_{\mathcal{SW}_N}
\left(
z,p^{k-\mathcal h(x)}(x)
\right)
\le C.$
An analogous argument applies to the descending part of $\gamma$ and the vertical component of $\gamma'$ joining $p^{h-\mathcal h(y)}(y)$ to $y$. The portion of $\gamma$ lying at height $h$ is also contained in a uniform neighborhood of $\gamma'$, since the total number of horizontal edges of $\gamma$ is uniformly bounded. We conclude that $\gamma\subset\mathcal N_C(\gamma').$

Conversely, every height between $\mathcal h(x)$ and $h$ is attained by the ascending part of $\gamma$. Thus, for every integer $k$ with $\mathcal h(x)\le k\le h,$
there exists a vertex $z_k$ on the ascending part of $\gamma$ with $\mathcal h(z_k)=k$. By the estimate above, $d_{\mathcal{SW}_N}
\left(
z_k,p^{k-\mathcal h(x)}(x)
\right)
\le C.$
Therefore every vertex of the vertical component of $\gamma'$ issuing from $x$ lies at uniformly bounded distance from $\gamma$. The same argument applies to the vertical component of $\gamma'$ issuing from $y$. Since the horizontal component of $\gamma'$ has uniformly bounded length, we conclude that $\gamma'\subset\mathcal N_C(\gamma).$
Thus $\gamma$ and $\gamma'$ are at uniformly bounded Hausdorff distance.
\end{proof}

We now return to the quantitative analysis of standard geodesics and
compute the distance between arbitrary vertices
$z_Q,z_R\in\mathcal{SW}_N$.
By Theorem~\ref{thm:normal-form}, if $Q \cap R =\emptyset$, any standard
geodesic must rise to a height $h$ large enough that the horizontal
distance between the ancestors $Q_h$ and $R_h$ is uniformly bounded,
and then descend.

In order to compute the distance between two points in $\mathcal{SW}_N$, we need a precise 
estimate of the maximal height of a standard geodesic in terms of the distance in $N$
between its endpoints.   The next lemma provides the first explicit upper bound for the height at which two dyadic chains become horizontally adjacent. This estimate will be the key ingredient in the computation of the maximal height of standard geodesics.
\begin{lemma}
\label{lem:quantitative-compression}
Let $k \in \mathbb Z$ and $Q,R \in \mathscr{D}_k$ be  dyadic cubes at height $k$, and set $D = d_N(c_Q,c_{R}).$ Define $m = \min \bigl\{ m \in \mathbb{N} : \eta^{k+m} \ge  D \bigr\}.$
Then the ancestors $Q^{(m)}, R^{(m)} \in \mathscr{D}_{k+m}$ satisfy
\[
\lambda Q^{(m)} \cap \lambda R^{(m)} \neq \emptyset.
\]
In particular, the corresponding vertices satisfy $d_H\bigl(z_{Q^{(m)}}, z_{R^{(m)}}\bigr) \le 1.$
\end{lemma}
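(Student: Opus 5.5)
The plan is to estimate directly the distance between the centres of the ancestors at height $k+m$, and to compare it with $2\lambda\eta^{k+m}$. If $d_N(c_{Q^{(m)}},c_{R^{(m)}}) < 2\lambda\eta^{k+m}$, then, since $N$ is geodesic, the midpoint of a geodesic joining the two centres lies at distance less than $\lambda\eta^{k+m}$ from both. Hence it belongs to $\lambda Q^{(m)}\cap\lambda R^{(m)}$, which is therefore nonempty. If $Q^{(m)}=R^{(m)}$ there is nothing to prove, and $d_H=0$. Otherwise the two vertices are joined by a horizontal edge, so $d_H\le 1$.

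For the estimate we use the triangle inequality together with \eqref{inclusions}, exactly as in the proof of Lemma \ref{lem:vertical-compression}, but without iterating. Since $Q\subset Q^{(m)}\subset B_N(c_{Q^{(m)}},c_2\eta^{k+m})$, we have $d_N(c_Q,c_{Q^{(m)}})\le c_2\eta^{k+m}$, and similarly for $R$. Hence
\[
d_N(c_{Q^{(m)}},c_{R^{(m)}})\le D+2c_2\eta^{k+m}\le (1+2c_2)\eta^{k+m},
\]
using $D\le\eta^{k+m}$ by the choice of $m$. This direct comparison with the cube at height $k+m$ is sharper than summing the geometric series in \eqref{eq:vertical-compression-iterated}, and is the form we want.

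It remains to check $1+2c_2<2\lambda$. This follows from \eqref{lambda}, since $\lambda>2c_2+1$ gives $2\lambda>2c_2+1$ with room to spare. The degenerate case $D=0$, that is $Q=R$, is allowed, with $m=0$ or any admissible value; the ancestors then coincide and the claim is trivial. Note that $m$ is well defined because $\eta^{k+m}\to\infty$, and $m\ge0$ is allowed.

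The only point requiring care is the containment step: we must make sure the ancestor's ball contains the original centre $c_Q$. This holds because $c_Q\in Q\subset Q^{(m)}$ by nesting. Beyond that, no real obstacle is expected.
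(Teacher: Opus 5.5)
Your proof is correct and follows the paper's argument. Both bound $d_N(c_{Q^{(m)}},c_{R^{(m)}})\le D+2c_2\eta^{k+m}\le(2c_2+1)\eta^{k+m}$ using the nesting $c_Q\in Q\subset Q^{(m)}$, the outer ball inclusion and the choice of $m$, and then invoke the choice of $\lambda$. The only difference is cosmetic: the paper uses $2c_2+1<\lambda$ directly, so $c_{R^{(m)}}$ itself lies in $\lambda Q^{(m)}$ without appealing to geodesicity, whereas you use the weaker threshold $2\lambda$ together with the midpoint of a geodesic; both are valid.
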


\begin{proof}
By \eqref{inclusions}
\[
d_N(c_{Q^{(m)}},c_Q) \le c_2 \eta^{k+m}, \qquad
d_N(c_{R^{(m)}},c_{R}) \le c_2 \eta^{k+m}.
\]
Hence
\[
d_N(c_{Q^{(m)}},c_{R^{(m)}}) \le D + 2 c_2 \eta^{k+m}.
\]
By definition of $m$, we have $\eta^{k+m} \ge D$, so by \eqref{lambda}
\[
D + 2 c_2 \eta^{k+m} \le (2c_2+1) \eta^{k+m} <\lambda \eta^{k+m}.
\]
This implies that $\lambda Q^{(m)} \cap \lambda R^{(m)} \neq \emptyset,$
and therefore the corresponding vertices in $\mathcal G_{k+m}$ are adjacent, i.e., $d_H\bigl(z_{Q^{(m)}}, z_{R^{(m)}}\bigr) \le 1.$
\end{proof}

We can now provide an estimate of the maximal height $h$ of a standard geodesic.

\begin{proposition}\label{prop:hmax}
Let $z_{Q}$ and $z_{R}$ be distinct vertices in $\mathcal{SW}_N$ and assume that $Q \cap R= \emptyset$ (so that there are no inclusion relations between $Q$ and $R$). Let
$\gamma$ be a geodesic in standard form connecting them.  
Let $h=h(\gamma)$ be its maximal height defined as in \eqref{heightofapath}.  
Then
\[
h \;=\; \log_\eta\bigl( d_N(c_Q,c_{R}) \bigr) \;+\; O(1).
\]
\end{proposition}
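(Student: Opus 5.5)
Set $k=\mathcal h(Q)$, $\ell=\mathcal h(R)$ and $D=d_N(c_Q,c_R)$. I will prove two one-sided bounds, $h\le \log_\eta D+O(1)$ and $h\ge \log_\eta D-O(1)$. Before that, I note that the statement implicitly needs $D\gtrsim\max\{\eta^k,\eta^\ell\}$. This should follow since $Q\cap R=\emptyset$, although it is subtle when $k\ne\ell$. Rather than relying on this, I will phrase both bounds relative to $H_0:=\max\{k,\ell\}$ and track the regime. All constants will depend only on $c_1,c_2,\eta,\lambda,K_1,K_2,K_N$.

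\emph{Upper bound.} I compare with a competitor path. Take $j=\max\{k,\ell\}$ and replace $Q,R$ by their ancestors $Q_j,R_j$ at height $j$; their centres are at distance at most $D+2c_2\eta^{j}$. Apply Lemma~\ref{lem:quantitative-compression} to $Q_j,R_j$. This gives a height $h_*$ with $\eta^{h_*}\simeq \max\{D,\eta^j\}$, hence $h_*\le \log_\eta\max\{D,\eta^j\}+O(1)$, at which the ancestors are adjacent or equal. The resulting path goes up to $h_*$, takes at most one horizontal edge, and comes down, so it has length at most $(h_*-k)+(h_*-\ell)+1$. The geodesic $\gamma$ is standard of height $h$, so its length is at least $(h-k)+(h-\ell)$. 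Comparing the two lengths gives $h\le h_*+1$.

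\emph{Lower bound.} By Theorem~\ref{thm:normal-form}, $d_H(z_{Q_h},z_{R_h})\le K_N$. Proposition~\ref{quasi-iso-hor} then gives $d_N(c_{Q_h},c_{R_h})\le K_1^{-1}K_N\eta^h$. By \eqref{inclusions}, $d_N(c_Q,c_{Q_h})\le c_2\eta^h$, and similarly for $R$. Hence $D\le (K_N/K_1+2c_2)\eta^h$, that is, $h\ge\log_\eta D-O(1)$. This direction is clean and holds without any case analysis.

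\emph{Main obstacle.} The upper bound is $\log_\eta D+O(1)$ only if $\eta^{\max\{k,\ell\}}\lesssim D$. When $k=\ell$ this is immediate, since disjoint cubes give $D\ge c_1\eta^k$. When $k\ne\ell$, say $k>\ell$, disjointness of $Q$ and $R$ only shows that $c_R$ lies outside $Q\supset B_N(c_Q,c_1\eta^k)$, so $D\ge c_1\eta^k$. Since $\ell<k$, this is exactly the needed bound $\eta^{\max\{k,\ell\}}\lesssim D$, and the upper bound becomes $h\le\log_\eta D+\log_\eta(1+2c_2/c_1)+O(1)$. I expect this check, together with keeping the additive constants uniform, to be the only delicate point. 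The remaining work is routine bookkeeping of $c_2$, $\lambda$ and $K_N$ in the two estimates.
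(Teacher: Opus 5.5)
Your argument is correct and is essentially the paper's proof. The lower bound comes from the uniformly bounded horizontal segment (Theorem~\ref{thm:normal-form} together with Proposition~\ref{quasi-iso-hor} and \eqref{inclusions}); the upper bound comes from applying Lemma~\ref{lem:quantitative-compression} to the ancestors at height $\max\{k,\ell\}$ and comparing lengths with the resulting competitor path, using exactly the paper's observation that disjointness forces $D\ge c_1\eta^{\max\{k,\ell\}}$ (your length comparison in fact gives $2(h-h_*)\le 1$, hence $h\le h_*$ for integer heights, though $h\le h_*+1$ suffices).
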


\begin{proof}
Let $D = d_N(c_Q,c_{R})$ and let $Q_h,R_h$ be the ancestors of $Q,R$ at height $h$.
Since $\gamma$ is standard, its horizontal segment at height $h$ has uniformly
bounded length, hence by Proposition~\ref{quasi-iso-hor},
\begin{align}\label{eq:upper-small}
\frac{d_N(c_{Q_{h}},c_{R_{h}})}{\eta^h} \;\le\; C.
\end{align}
Using \eqref{inclusions},
\[
d_N(c_Q,c_{R})
\;\le\;
d_N(c_{Q_{h}},c_{R_{h}}) + d_N(c_Q,c_{Q_{h}})+d_N(c_{R},c_{R_{h}})
\le C \eta^h.
\]
Thus
\begin{align}\label{eq:lower-bound-on-h}
h \;\ge\; \log_\eta D - C.
\end{align}

Now we prove the reverse inequality.  
The idea is that, once the two ancestor cubes become horizontally adjacent, any additional ascent only produces unnecessary vertical cost without reducing the horizontal part. 

Indeed, suppose without loss of generality that $k=\mathcal h(z_{Q}) \ge \ell= \mathcal h(z_{R})$.
We shall apply Lemma~\ref{lem:quantitative-compression} at height $k$ to $Q$ and $R^{(k-\ell)} \in \mathscr D_k$. Set $D_k=d_N(c_Q,c_{R^{(k-\ell)}})$. 
By assumption $R^{(k-\ell)} \ne Q$.

Observe that \begin{align*}
D_k&\le d_N(c_Q, c_{R})+d_N(c_{R},c_{R^{(k-\ell)}}) \\&\le d_N(c_Q, c_{R})+c_2\eta^k \\&\le C d_N(c_Q, c_{R})
\\&=CD,
\end{align*}
because we are assuming that $R \not \subset Q$, so $d_N(c_Q,c_{R}) \ge c_1 \eta^k$. Then, by Lemma \ref{lem:quantitative-compression} 
there exists an integer $m$, with
\begin{align}\label{eq:m}
m \;\le\;\log_\eta\!\Bigl(\frac{D_k}{\eta^{k}}\eta\Bigr)+C\le \log_\eta\!\Bigl(\frac{D}{\eta^{k}}\Bigr)+C,
\end{align}
such that the ancestors $Q^{(m)},R^{(m+k-\ell)}$ at height $h_* = k+m$
satisfy
\begin{align}\label{eq:hstar}
d_H\!\bigl( z_{Q^{(m)}},\, z_{R^{(m+k-\ell)}} \bigr)\;\le\;1.
\end{align}

Hence \eqref{eq:hstar}  implies that at height $h_*$ the predecessors are already horizontally adjacent.
But if $h>h^*$, then using height $h$ instead of $h^*$ would require
$2(h-h^*)$ additional vertical edges, while reducing the horizontal
length by at most one. Hence the resulting path would be strictly
longer, contradicting the minimality of $\gamma$.
Therefore $h \;\le\; h_* \;=\; k+m.$
\\ Combining \eqref{eq:m} and \eqref{eq:lower-bound-on-h} gives
\[
h
=
\log_\eta D + O(1),
\] as required.
\end{proof}

\subsection{Comparison with the continuous hyperbolic-type model}\label{sec:QI}

We now show that the discrete space $\mathcal{SW}_N$, equipped with its natural
graph metric, is $1$--quasi--isometric (up to an additive constant)
to the space $S = N \times \mathbb R_{+}$
endowed with the metric \eqref{eq:hyperbolic-formula} with $C_N=\frac{1}{\log\eta }$.
\begin{theorem}
\label{thm:1QI}
There exists a constant $C> 0$ such that the natural embedding $\Psi: \mathcal{SW}_N \to S$
defined by $\Psi(z_Q)=(c_Q,\eta^{\mathcal h(Q)})$ for every $z_Q \in \mathcal{SW}_N$ is a $(1,C)$-quasi-isometry. Namely, 
 for all vertices
$z_{Q},\,z_{R}\in \mathcal{SW}_N$, one has
\[
\bigl|\, d_S(\Psi(z_{Q}),\,\Psi(z_{R})) 
      \;-\; d_{\mathcal{SW}_N}(z_{Q},z_{R}) \,\bigr|
\;\le\; C
\]
and $\sup_{x \in S}d_S(x,\Psi(\mathcal{SW}_N))< \infty$.
\end{theorem}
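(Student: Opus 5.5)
The coarse surjectivity $\sup_{x\in S}d_S(x,\Psi(\mathcal{SW}_N))<\infty$ is already contained in property (3) of the list following Remark \ref{rmk:sw}: every $(x,a)$ with $\eta^k\le a<\eta^{k+1}$ lies within $C_N[\mathrm{arcosh}(1+c_2^2)+\mathrm{arcosh}((\eta^2+1)/2)]$ of $\Psi(z_Q)$, where $Q\in\mathscr D_k$ is the cube containing $x$. So the work is in the additive comparison of distances. The plan is to compute both distances up to $O(1)$ as explicit functions of $k=\mathcal h(Q)$, $\ell=\mathcal h(R)$ and $D=d_N(c_Q,c_R)$, and then check that they agree when $C_N=1/\log\eta$.

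\textbf{Discrete side.} I split into two cases.

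\emph{Case $Q\cap R\neq\emptyset$.} Say $Q\subset R$. The geodesic is vertical, so $d_{\mathcal{SW}_N}=\ell-k$.

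\emph{Case $Q\cap R=\emptyset$.} By Theorem \ref{thm:normal-form} there is a standard geodesic of height $h$ whose horizontal part has length at most $K_N$. Its length is therefore
\[
(h-k)+(h-\ell)+O(1).
\]
Take $k\ge \ell$ without loss of generality.
\begin{itemize}
\item If $D\ge \eta^{k}$, Proposition \ref{prop:hmax} gives $h=\log_\eta D+O(1)$. Then
\[
d_{\mathcal{SW}_N}=2\log_\eta D-k-\ell+O(1).
\]
\item If $D<\eta^k$, then disjointness and \eqref{inclusions} give $D\ge c_1\eta^k$, so $D\simeq \eta^k$. Proposition \ref{prop:hmax} then yields $h=k+O(1)$, and
\[
d_{\mathcal{SW}_N}=k-\ell+O(1).
\]
\end{itemize}
Both subcases, and the nested case, fit the unified form
\[
d_{\mathcal{SW}_N}=2\log_\eta\max\{D,\eta^k,\eta^\ell\}-k-\ell+O(1).
\]
In the nested case this holds because $D\le c_2\eta^\ell$ forces the maximum to be $\eta^\ell$ up to a constant factor.

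\textbf{Continuous side.} I use the elementary estimate $\mathrm{arcosh}(1+t)=\log(2+2t)+O(1)$, valid uniformly for all $t\ge0$. With $t=\frac{D^2+(\eta^k-\eta^\ell)^2}{2\eta^{k+\ell}}$ this gives
\[
\mathrm{arcosh}(1+t)=\log\frac{D^2+\eta^{2k}+\eta^{2\ell}}{\eta^{k+\ell}}+O(1).
\]
The reason is that $2\eta^{k+\ell}+D^2+(\eta^k-\eta^\ell)^2=D^2+\eta^{2k}+\eta^{2\ell}$ exactly. Since $D^2+\eta^{2k}+\eta^{2\ell}\simeq\max\{D,\eta^k,\eta^\ell\}^2$, multiplying by $C_N=1/\log\eta$ gives
\[
d_S=2\log_\eta\max\{D,\eta^k,\eta^\ell\}-k-\ell+O(1).
\]
This matches the discrete formula, and subtracting the two yields the $(1,C)$ bound.

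\textbf{Main obstacle.} The delicate step is making the discrete estimate uniform with the correct leading coefficient 1. This means verifying that Proposition \ref{prop:hmax} gives $h=\log_\eta D+O(1)$ exactly as used, including when $k\neq\ell$ and $D$ is only comparable to $\eta^k$. It also means confirming that the horizontal cost is genuinely $O(1)$, which relies on \eqref{hor:length}. I would handle the regime $D\simeq\eta^k$ by a direct bound rather than through the logarithm:
\begin{itemize}
\item the upper bound comes from the explicit path that ascends to height $k+O(1)$ and uses Lemma \ref{lem:quantitative-compression};
\item the lower bound is simply $d_{\mathcal{SW}_N}\ge|k-\ell|$, since each edge changes the height by at most one.
\end{itemize}
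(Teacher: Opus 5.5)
Your proof is correct and follows essentially the paper's argument: you get the discrete distance from Theorem \ref{thm:normal-form} and Proposition \ref{prop:hmax} in the disjoint case and from the vertical geodesic in the nested case, and coarse surjectivity is property (3). The only difference is on the continuous side, where your exact identity $2\eta^{k+\ell}+D^2+(\eta^k-\eta^\ell)^2=D^2+\eta^{2k}+\eta^{2\ell}$ gives the single formula $d_S=2\log_\eta\max\{D,\eta^k,\eta^\ell\}-k-\ell+O(1)$, which replaces the paper's split into the regimes $d_N\lessgtr\widetilde C\eta^{\max(k,j)}$.
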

\begin{proof}

Let $Q \in \mathscr D_k$, $R \in \mathscr D_j$, $p=\Psi(z_Q)$ and $q=\Psi(z_R)$, with $Q\neq R$.
Let $\gamma$ be a standard geodesic in $\mathcal{SW}_N$ connecting $z_Q$ and $z_R$ and $h=h(\gamma)$ the  height of $\gamma$.

\textbf{Case 1:} suppose there are no inclusion relations between $Q$ and $R$. \\
Observe that by Theorem \ref{thm:normal-form} and Proposition \ref{prop:hmax} 
\begin{align}\label{distSW}
    d_{\mathcal{SW}_N}(z_Q,z_R)= h-k+h-j+O(1)=2h-(k+j)+O(1).
\end{align}
Set $d_N=d_N(c_Q,c_R)$. We have proved in Proposition \ref{prop:hmax} that
\begin{equation}\label{eq:dN-scale}
d_N \;\simeq\; \eta^h.
\end{equation}
By \eqref{eq:hyperbolic-formula} we have that
\begin{align}\label{distance-DR}
 d_S(p,q)=
\frac{1}{\log\eta}\mathrm{arcosh}\left(1+
\frac{d_N^2 + (\eta^k-\eta^j)^2}{2\cdot \eta^{k+j}}\right).
\end{align}
There are two regimes. Let $\widetilde{C}>0$ be a fixed constant.

\smallskip
Assume $d_N\le \widetilde C\eta^{\max(k,j)}$.
Then the numerator in \eqref{distance-DR} is dominated  by $\widetilde C\eta^{2\max(k,j)}$, while the denominator is
$\eta^{k+j}$.  Hence
\[
 d_S(p,q)=\ \log_\eta(\eta^{\,|k-j|})+O(1)=|k-j|+O(1).
\]
On the discrete side, the assumption on $d_N$ implies $\eta^{h} \le C\eta^{\max\{k,j\}}$, and since $h \ge \max\{k,j\}$ it in turn implies $h={\max\{k,j\}}+O(1)$. We conclude by \eqref{distSW} that
\[
d_{\mathcal{SW}_N}(z_{Q},z_{R}) = |k-j|+O(1).
\]
Suppose now that  $d_N\ge \widetilde C \eta^{\max(k,j)}$.
Using~\eqref{eq:dN-scale}, the horizontal term dominates in \eqref{distance-DR}:
\[
 d_S(p,q)
=\log_\eta\left(
\frac{\eta^{2h}}{\eta^{k+j}}\right)+O(1)
=
2h-(k+j)+O(1).
\]
This coincides with \eqref{distSW}.

\textbf{Case 2:} if $Q \subset R$ or $R \subset Q$, then $d_{\mathcal{SW}_N}(z_{Q},z_{R})=|j-k|$ (the unique geodesic connecting them is vertical). This inclusion relation implies that $d_N(c_Q,c_{R}) \le c_2 \eta^{\max\{k,j\}}$, which by \eqref{distance-DR} yields
\begin{align*}
 d_S(p,q)
&=\log_\eta \left(\eta^{2\max\{k,j\}-(k+j)}\right)+O(1)\\&=|k-j|+O(1).
\end{align*}
This concludes the proof.
\end{proof}
The next proposition shows that $\mathcal{SW}_N$ is Gromov hyperbolic. This fact is not required for our main application, as the existence of standard geodesics with uniformly bounded horizontal components on $\mathcal{SW}_N$ has already been established. Interestingly, our argument reverses the strategy of \cite{CMS}: rather than deriving the existence of such geodesics from hyperbolicity, we use their existence to prove the Gromov hyperbolicity of $\mathcal{SW}_N$.

\begin{proposition}\label{gromov} The spiderweb $\mathcal{SW}_N$ is $\delta$-Gromov hyperbolic for some $\delta>0$.
\end{proposition}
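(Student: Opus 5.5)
The plan is to prove that geodesic triangles in $\mathcal{SW}_N$ are uniformly thin. By Lemma \ref{lem:standard-geodesic-approximation}, every geodesic lies at uniformly bounded Hausdorff distance from a standard geodesic with the same endpoints. Hence it suffices to show that triangles whose three sides are standard geodesics, each with horizontal component of length at most $K_N$ (Theorem \ref{thm:normal-form}), are $\delta'$-slim. The constant $\delta$ for arbitrary geodesic triangles is then $\delta'$ plus twice the constant $C$ of that lemma.

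A standard geodesic from $x$ to $y$ at height $h$ is, up to a bounded horizontal piece, the union of the vertical rays $[x,p^{h-\mathcal h(x)}(x)]$ and $[y,p^{h-\mathcal h(y)}(y)]$. So each side of the triangle with vertices $x,y,w$ is the union of two vertical segments, and every vertex of a side lies within distance $K_N$ of one of the three vertical rays $\{p^m(x)\}_{m\ge0}$, $\{p^m(y)\}_{m\ge0}$, $\{p^m(w)\}_{m\ge0}$. Let $h_{xy},h_{yw},h_{xw}$ be the heights of the three sides. The side $[x,y]$ contains the portion of the $x$-ray up to height $h_{xy}$, and the side $[x,w]$ contains the portion up to height $h_{xw}$.

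The key claim is the following. If $h_{xy}\le h_{xw}$, then every $v=p^m(x)$ with $\mathcal h(v)\le h_{xy}$ lies within bounded distance of $[x,w]$, trivially, since $v$ itself lies on $[x,w]$. Suppose instead that $\mathcal h(v)\le h_{xw}$ but $\mathcal h(v)>h_{xy}$, so that $v$ lies on the $x$-ray part of $[x,w]$ but not on the $x$-ray part of $[x,y]$. Then we want $v$ close to $[y,w]$. For this we use the ancestors at height $h_{xy}$: $p^{h_{xy}-\mathcal h(x)}(x)$ and $p^{h_{xy}-\mathcal h(y)}(y)$ are at horizontal distance at most $K_N$. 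By \eqref{stella} of Lemma \ref{lem:vertical-compression}, or directly by \eqref{CMS}, their further ancestors at every height $k\ge h_{xy}$ stay at horizontal distance at most $K_N$. So above height $h_{xy}$ the $x$-ray and the $y$-ray fellow-travel. Consequently the $x$-ray at height $\mathcal h(v)$ lies within $K_N$ of the $y$-ray, and we need that portion of the $y$-ray to belong to $[y,w]$, i.e.\ $\mathcal h(v)\le h_{yw}$.

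This yields a "tripod" picture. By symmetry, order the heights so that $h_{xy}$ is minimal. Fellow-travelling of the $x$- and $y$-rays above $h_{xy}$, together with Proposition \ref{prop:hmax} (heights are $\log_\eta$ of base distances up to $O(1)$) and the triangle inequality in $N$, gives $h_{xw}=h_{yw}+O(1)$. Then every point of $[x,w]$ is near the $x$-ray below $h_{xy}$, which lies in $[x,y]$; or near the $x$-ray between $h_{xy}$ and $h_{xw}$, hence within $K_N$ of the $y$-ray, which lies in $[y,w]$ up to a bounded height discrepancy; or near the $w$-ray part, which is shared with $[y,w]$ up to a bounded horizontal shift at the top. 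The sides $[x,y]$ and $[y,w]$ are handled similarly.

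I expect the main obstacle to be establishing $h_{xw}=h_{yw}+O(1)$ when $h_{xy}$ is minimal. I would first try to derive it from Proposition \ref{prop:hmax}. The degenerate cases with inclusion relations, and those where the base distance is smaller than the scale of the cubes, must be handled separately; these are controlled by Lemma \ref{lem:quantitative-compression}. An alternative route that bypasses this step is to use Theorem \ref{thm:1QI}. The $(1,C)$-quasi-isometry gives a four-point Gromov product condition on $\mathcal{SW}_N$ up to additive constants whenever $S$ satisfies it. However, $S$ itself is not known to be hyperbolic without further work, so I would keep the direct tripod argument as the main line.
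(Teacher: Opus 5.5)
Your proposal is correct and is essentially the paper's proof: you reduce to triangles of standard geodesics via Lemma \ref{lem:standard-geodesic-approximation}, show that the two largest heights differ by $O(1)$, and then run the same tripod argument, projecting the bounded horizontal piece of the lowest side upward with \eqref{CMS}. For the height comparison, the paper avoids your case distinctions by noting that $h(\gamma_{ab})=\log_\eta\bigl(d_N(c_{Q_a},c_{Q_b})+\eta^{\mathcal h(a)}+\eta^{\mathcal h(b)}\bigr)+O(1)$ in all cases and then using the triangle inequality in $N$. Your ``main obstacle'' also follows directly from your fellow-travelling step: if $h_{xy}\le h_{yw}\le h_{xw}$, the ancestors of $x$ and $w$ at height $h_{yw}$ are within horizontal distance $2K_N$, so minimality of $\gamma_{xw}$ already gives $h_{xw}\le h_{yw}+K_N$.
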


\begin{proof}
We identify $\mathcal{SW}_N$ with the associated metric graph obtained
by replacing every edge with a segment of length one.

Recall that a geodesic triangle with sides
$\gamma_{xy}$, $\gamma_{yz}$, and $\gamma_{xz}$ is called
$\delta$-slim if each side is contained in the $\delta$-neighbourhood
of the union of the other two; equivalently,
\[
    \gamma_{xy}
    \subset
    \mathcal N_\delta(\gamma_{xz}\cup\gamma_{yz}),
\]
and the analogous inclusions hold cyclically. A geodesic metric space
is Gromov hyperbolic if all its geodesic triangles are $\delta$-slim
for some uniform constant $\delta\geq0$.

By Lemma~\ref{lem:standard-geodesic-approximation}, it is enough to prove that triangles whose sides
are standard geodesics are uniformly slim. Let
$x,y,z\in\mathcal{SW}_N$, and let
$\gamma_{xy}$, $\gamma_{yz}$, and $\gamma_{xz}$ be standard geodesics
joining the corresponding pairs of vertices. Set
\[
h_{xy}=h(\gamma_{xy}),
\qquad
h_{yz}=h(\gamma_{yz}),
\qquad
h_{xz}=h(\gamma_{xz}).
\]
We first observe that, for any two vertices $a,b\in\{x,y,z\}$, by Proposition \ref{prop:hmax}
\[
h(\gamma_{ab})=
\log_\eta
\left(
d_N(c_{Q_a},c_{Q_b})
\right)
+
O(1),
\]
provided there are no inclusion relations between $Q_a$ and $Q_b$. In this case by \eqref{inclusions} also $\eta^{\max\{\mathcal{h}(a),\mathcal{h}(b)\}}\le C d_N(c_{Q_a},c_{Q_b})$.
If, instead, one of the two cubes is contained in the other, then
\[
h(\gamma_{ab})=\max\{\mathcal h(a),\mathcal h(b)\},
\]
while by \eqref{inclusions}
\[
d_N(c_{Q_a},c_{Q_b})
\le C
\eta^{\max\{\mathcal h(a),\mathcal h(b)\}},
\]
and the same estimate follows.
In any case, $$h(\gamma_{ab})=\log_{\eta}\left(d_N(c_{Q_a},c_{Q_b})+ \eta^{\mathcal{h}(a)}+\eta^{\mathcal{h}(b)}\right)
+
O(1).$$
Set
\[
R_{ab}=d_N(c_{Q_a},c_{Q_b})
+
\eta^{\mathcal h(a)}
+
\eta^{\mathcal h(b)}.
\]
By the triangle inequality in $(N,d_N)$,
\[
R_{xz}
\le
R_{xy}+R_{yz}
\le
2\max\{R_{xy},R_{yz}\}.
\]
Consequently,
\[
h_{xz}
\le
\max\{h_{xy},h_{yz}\}+C,
\]
and the analogous estimates hold after permuting $x,y,z$.

Assume, without loss of generality, that $h_{xz}\ge h_{yz}\ge h_{xy}.$
It follows that
\begin{align}\label{diffh}
0\le h_{xz}-h_{yz}\le C.
\end{align}

Let $K_N>0$ be a uniform upper bound for the length of the horizontal component of every standard geodesic, as provided by Theorem \ref{thm:normal-form}.

We first consider $\gamma_{xy}$. The vertical component of $\gamma_{xy}$ issuing from $x$ is contained in the corresponding vertical component of $\gamma_{xz}$, while the vertical component issuing from $y$ is contained in the corresponding vertical component of $\gamma_{yz}$. Since the horizontal component of $\gamma_{xy}$ has length at most $K_N$, we obtain $\gamma_{xy}
\subset
\mathcal N_{K_N}(\gamma_{xz}\cup\gamma_{yz}).$

We next prove that $\gamma_{xz}
\subset
\mathcal N_C(\gamma_{xy}\cup\gamma_{yz})$ for an appropriate $C>0$.
Let $\gamma_{xz}^{(x)}$ and $\gamma_{xz}^{(z)}$ denote the two vertical components of $\gamma_{xz}$ issuing from $x$ and $z$, respectively, and let $\gamma_{xz}^{(h)}$ denote its horizontal component. Thus
\[
\gamma_{xz}=\gamma_{xz}^{(x)}
\cup
\gamma_{xz}^{(h)}
\cup
\gamma_{xz}^{(z)}.
\]
The vertical components of $\gamma_{xz}$ and $\gamma_{yz}$ issuing from $z$ coincide up to height $h_{yz}$ and  by \eqref{diffh}
the remaining portion of $\gamma_{xz}^{(z)}$ has uniformly bounded length. Hence $\gamma_{xz}^{(z)}
\subset
\mathcal N_C(\gamma_{yz}).$
Moreover, $\gamma_{xz}^{(h)}$ has length at most $K_N$ and intersects $\gamma_{xz}^{(z)}$. Therefore, after possibly enlarging $C$, $\gamma_{xz}^{(h)}
\subset
\mathcal N_C(\gamma_{yz}).$

It remains to consider $\gamma_{xz}^{(x)}$. For every integer $k$ satisfying $\mathcal h(x)\le k\le h_{xy},$
the vertex $p^{k-\mathcal h(x)}(x)$
belongs to the vertical component of $\gamma_{xy}$ issuing from $x$.

Suppose now that $h_{xy}\le k\le h_{yz}.$
Projecting the horizontal component of $\gamma_{xy}$ from height $h_{xy}$ to height $k$ and using the spiderweb property yields a horizontal path of length at most $K_N$ joining $p^{k-\mathcal h(x)}(x)$ and $p^{k-\mathcal h(y)}(y)$,
 see Figure \ref{fig:3}.
The latter vertex belongs to the vertical component of $\gamma_{yz}$ issuing from $y$. Therefore
\[
d_{\mathcal{SW}_N}
\left(
p^{k-\mathcal h(x)}(x),
\gamma_{yz}
\right)
\le K_N.
\]

Finally, the portion of $\gamma_{xz}^{(x)}$ corresponding to heights
$h_{yz}\le k\le h_{xz} $ has length at most $h_{xz}-h_{yz}\le C$
from the vertex at height $h_{yz}$, which is already at uniformly bounded distance from $\gamma_{yz}$. Consequently, $\gamma_{xz}^{(x)}
\subset
\mathcal N_C(\gamma_{xy}\cup\gamma_{yz}),$
and hence
\[
\gamma_{xz}
\subset
\mathcal N_C(\gamma_{xy}\cup\gamma_{yz}).
\]
The inclusion $\gamma_{yz}\subset \mathcal{N}_C(\gamma_{xy}\cup \gamma_{xz})$ is completely analogous. 

We have therefore proved that every triangle whose sides are standard geodesics is $\delta$--slim for some uniform constant $\delta>0$. By the reduction explained above this concludes the proof.
\end{proof}
\begin{figure}
\begin{tikzpicture}[scale=1]

\def\hxy{1.6}
\def\hyz{3.3}
\def\hxz{5.0}

% Base points
\coordinate (x) at (0,0);
\coordinate (y) at (5,0);
\coordinate (z) at (10,0);

% Left branch (allineato)
\coordinate (A) at (1.3,\hxy);
\coordinate (C) at (2.6,\hyz);
\coordinate (E) at (3.9,\hxz);

% Right branch (allineato)
\coordinate (B) at (8.7,\hxy);
\coordinate (D) at (7.4,\hyz);
\coordinate (F) at (6.1,\hxz);

% Central branch above y
\coordinate (Y1) at (5,\hxy);
\coordinate (Y2) at (5,\hyz);

%--------------------------------------------------
% Height guides
%--------------------------------------------------

\draw[dashed,gray!60] (-1,\hxy)--(11,\hxy);
\draw[dashed,gray!60] (-1,\hyz)--(11,\hyz);
\draw[dashed,gray!60] (-1,\hxz)--(11,\hxz);

%--------------------------------------------------
% Branch above x
%--------------------------------------------------

\draw[
blue,
line width=1.6pt,
dash pattern=on 4pt off 4pt
]
(x)--(A);

\draw[
green!60!black,
line width=1.6pt,
dash pattern=on 4pt off 4pt,
dash phase=4pt
]
(x)--(A);

%--------------------------------------------------
% Branch above y (UNICO)
%--------------------------------------------------

\draw[
blue,
line width=1.6pt,
dash pattern=on 4pt off 4pt
]
(y)--(Y1);

\draw[
red,
line width=1.6pt,
dash pattern=on 4pt off 4pt,
dash phase=4pt
]
(y)--(Y1);

%--------------------------------------------------
% Branch above z
%--------------------------------------------------

\draw[
red,
line width=1.6pt,
dash pattern=on 4pt off 4pt
]
(B)--(z);
\draw[
red,
line width=1.6pt,
dash pattern=on 4pt off 4pt
]
(B)--(D);

\draw[
green!60!black,
line width=1.6pt,
dash pattern=on 4pt off 4pt,
dash phase=4pt
]
(B)--(z);
\draw[
green!60!black,
line width=1.6pt,
dash pattern=on 4pt off 4pt,
dash phase=4pt
]
(B)--(D);

%--------------------------------------------------
% gamma_xy
%--------------------------------------------------

\draw[blue,very thick]
(A)--(Y1);

\draw[red,very thick]
(Y2)--(Y1);
%--------------------------------------------------
% gamma_yz
%--------------------------------------------------

\draw[red,very thick]
(Y2)--(D);

%--------------------------------------------------
% gamma_xz
%--------------------------------------------------

\draw[green!60!black,very thick]
(A)--(C)--(E);

\draw[green!60!black,very thick]
(E)--(F);

\draw[green!60!black,very thick]
(F)--(D);

%--------------------------------------------------
% Points
%--------------------------------------------------

\foreach \P in {x,y,z,A,D,E,F,Y1,Y2}
{
  \fill (\P) circle (2.3pt);
}

%--------------------------------------------------
% Labels
%--------------------------------------------------

\node[below] at (x) {$x$};
\node[below] at (y) {$y$};
\node[below] at (z) {$z$};

\node[blue]
at (3.2,\hxy+0.3)
{$\gamma_{xy}$};

\node[red]
at (6.8,\hyz+0.3)
{$\gamma_{yz}$};

\node[green!60!black]
at (5,\hxz-0.35)
{$\gamma_{xz}$};

\node[left] at (-1,\hxy)
{$h(\gamma_{xy})$};

\node[left] at (-1,\hyz)
{$h(\gamma_{yz})$};

\node[left] at (-1,\hxz)
{$h(\gamma_{xz})$};

%--------------------------------------------------
% bounded horizontal part
%--------------------------------------------------

\draw[<->]
(3.9,\hxz+0.5)
--
(6.1,\hxz+0.5);

\node at (5,\hxz+0.85)
{\footnotesize uniformly bounded};

%--------------------------------------------------
% constant C
%--------------------------------------------------

\draw[<->,thick]
(10.8,\hyz)
--
(10.8,\hxz);

\node[right]
at (10.9,{(\hyz+\hxz)/2})
{$\le C$};

\end{tikzpicture}
\begin{caption}{\label{fig:3} Standard geodesics joining $x,y,$ and $z$ described in the proof of Proposition \ref{gromov}.}
\end{caption}
\end{figure}

\begin{corollary}\label{COR:GROMOV}
If $S$ is a geodesic space, then $S$ is Gromov hyperbolic.
\end{corollary}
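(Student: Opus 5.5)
The plan is to use the fact that Gromov hyperbolicity is a quasi-isometry invariant among geodesic metric spaces; see \cite[Ch.~III.H, Th.~1.9]{BH}. By Proposition \ref{gromov}, the graph $\mathcal{SW}_N$, viewed as a metric graph with unit-length edges, is a geodesic $\delta$-hyperbolic space. By Theorem \ref{thm:1QI}, the map $\Psi:\mathcal{SW}_N\to S$ is a $(1,C)$-quasi-isometric embedding with quasi-dense image. With the normalisation $C_N=1/\log\eta$ this is exactly the setting of Theorem \ref{thm:1QI}. For any other $C_N$, the rescaled metric $d_S$ differs by a multiplicative constant, so $\Psi$ is still a quasi-isometry and hyperbolicity is unaffected.

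The first step is to extend $\Psi$ from the vertex set to the whole metric graph. Each point on an edge is sent to the image of the nearest endpoint. The extended map is still a quasi-isometry, because edges have length one and the vertex set is $1/2$-dense in the metric graph.

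The second step is to record the coarse surjectivity: $\sup_{x\in S} d_S(x,\Psi(\mathcal{SW}_N))<\infty$, which was verified in property (3) of the list following Remark \ref{rmk:sw}. It follows that $\Psi$ is a quasi-isometry of geodesic spaces, using the assumption that $S$ is geodesic. One then invokes the invariance theorem to conclude that $S$ is $\delta'$-hyperbolic for some $\delta'$ depending only on $\delta$ and the quasi-isometry constants.

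The main point requiring care is that the invariance theorem needs both spaces to be geodesic. This is exactly the hypothesis on $S$, and it is needed because the theorem's proof relies on the stability of quasi-geodesics (the Morse lemma) in the hyperbolic space $\mathcal{SW}_N$. Concretely, a geodesic triangle in $S$ is pulled back through a quasi-inverse $\Phi$ of $\Psi$ to a quasi-geodesic triangle in $\mathcal{SW}_N$. Each of its sides lies within uniformly bounded Hausdorff distance of a geodesic, and geodesic triangles in $\mathcal{SW}_N$ are $\delta$-slim. Pushing this forward by $\Psi$ then gives uniform slimness in $S$.
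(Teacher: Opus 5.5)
Your proposal is correct and follows the paper's own argument: the paper's proof simply combines Proposition~\ref{gromov} with the $(1,C)$-quasi-isometry of Theorem~\ref{thm:1QI} and the quasi-isometry invariance of hyperbolicity for geodesic spaces \cite[Ch.~III.H, Th.~1.9]{BH}. Your additional details are the routine steps the paper leaves implicit: extending $\Psi$ to the metric graph, handling the normalisation of $C_N$, and applying the invariance theorem to a quasi-inverse $S\to\mathcal{SW}_N$ via the Morse lemma.
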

\begin{proof}
    This follows by putting together the fact that $\mathcal{SW}_N$ is Gromov hyperbolic and that $\mathcal{SW}_N$ is 1-quasi-isometric to $S$ (see e.g. \cite[Ch. III.H, Th. 1.9]{BH}).
\end{proof}
\begin{remark}
    Corollary \ref{COR:GROMOV} applies in particular to the cuspidal manifolds $\mathrm{Cusp}(X)$ considered by H.-Q. Li \cite{LiJFA, LiMa, Li2007} when the base space $X$ is doubling.
\end{remark}

\section{Transference and main results}\label{sec:transf}

Let $S = N \times \mathbb R_{+}$, where $(N,d_N,\mu_N)$ is a geodesic doubling metric measure space. 
For every $(x,y),(x',y') \in S$ recall that
\begin{align*}
 d_S\big((x,y),(x',y')\big)
 =
 C_N \,\mathrm{arcosh}\!\left(
 1 + \frac{d_N(x,x')^2 + (y-y')^2}{2yy'}
 \right),
\end{align*}
where $C_N>0$ is the normalisation constant introduced above.
The precise value $C_N=(\log \eta)^{-1}$ plays no role in our results. Indeed, the validity of a global Poincaré inequality is invariant under rescaling of the metric: if such an inequality holds for $d_S$, then it also holds for any constant multiple of $d_S$, possibly with different constants. We proved in Theorem \ref{thm:1QI} that there is a graph $\mathcal{SW}_N$ that is $1$--quasi--isometric to $S$. 

\begin{definition}\label{def:comp}
Let $(M,d_M,\rho)$ be a metric measure space and let
$(\mathcal{G},d_{\mathcal{G}},\mu)$ be a graph endowed with a measure $\mu$.
Assume that the vertex set of $\mathcal{G}$ is identified with a uniformly
dense and uniformly separated subset of $M$. We say that $\rho$ and $\mu$ are \emph{compatible} if there exists a constant
$C\geq 1$ such that
\[
C^{-1}\rho(B_M(v,1))
\leq
\mu(v)
\leq
C\,\rho(B_M(v,1))
\]
for every vertex $v\in\mathcal{G}$.
\end{definition}
In this section, whenever the set of vertices of $\mathcal G$ is a uniformly dense and uniformly separated subset of a metric space $(M,d_M)$ and the two spaces are 1-quasi-isometric, we denote by $D,d,k_2$ the positive constants such that

\begin{align}\label{Condition 1 of 1-q isom}
		 d_\mathcal{G}(x,y) -k_2 \leq d_{M}(x,y) \leq  d_\mathcal{G}(x,y) +k_2, \quad \forall x,y \in \mathcal{G},
	\end{align}
	
	\begin{align}\label{distinct vertices are distant also in S}
		\sup_{s \in M}d_{M}(s, \mathcal{G}) = D < \infty,
	\end{align}
and  \begin{align}\label{disjointballs}2d=\inf_{u,v \in \mathcal{G}, u\ne v} d_M(u,v)>0.\end{align}
Our transference result is the following.
\begin{theorem}\label{transference}
    Suppose that $(M,d_M,\rho)$ and $(\mathcal G,d_{\mathcal G}, \mu)$  are locally doubling metric measure spaces and that $\mathcal{G}$ is a graph equipped with the discrete distance $d_{\mathcal G}$. Assume the following
    \begin{itemize}
        \item[i)]the set of vertices of $\mathcal G$ is a uniformly dense and uniformly separated subset of $M$;
        \item[ii)] there is a $1$-quasi-isometry between $\mathcal{G}$ and $M$;
        \item[iii)] $(\mathcal{G},d_{\mathcal{G}},\mu)$ supports the following  {\it global} $L^p$-Poincaré inequality for some $p \in [1,\infty)$, i.e., there exists $C>0$ such that for every $R>0$
        \begin{align*}
            \sum_{y \in B_R} |F(y)-F_{B_R}|^p \mu(y) \le C R^p \sum_{y \in B_R} |\nabla F(y)|^p \mu(y),
        \end{align*}
        and $(M,d_M,\rho)$ supports a weak local $L^p$-Poincaré inequality up to scale $R_{\mathrm{loc}}>2D+k_2$ for some dilation parameter $\lambda\ge1$;
        \item[iv)] $\rho$ and $\mu$ are compatible. 
\end{itemize} Then, $(M, d_M, \rho)$ supports a global $L^p$-Poincaré inequality. Moreover, there is a constant $\mathcal{C}_0=(\lambda+2)k_2 + (2\lambda+3)D>0$ such that for every $R>R_\mathrm{loc}$
\begin{align*}
    \int_{B_M(x_0,R)} |f-f_{B_M(x_0,R)}|^p \mathrm{d}\rho \le C R^p \int_{B_M(x_0,R+\mathcal{C}_0)} |g_f|^p \mathrm{d}\rho,
\end{align*} for every upper gradient $g_f$ of $f$. 
\end{theorem}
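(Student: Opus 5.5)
The plan is the standard Kanai/Coulhon--Saloff-Coste discretisation: average $f$ over small cells attached to the vertices, apply the graph Poincaré inequality to the averaged function, and control both the discretisation error and the discrete gradient by the local Poincaré inequality on $M$ at a fixed scale.

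Fix $x_0\in M$ and $R>R_{\mathrm{loc}}$, and write $B=B_M(x_0,R)$. Choose a vertex $v_0\in\mathcal G$ with $d_M(x_0,v_0)\le D$. Define the cover $U_v=B_M(v,D)$ for $v\in\mathcal G$. By \eqref{distinct vertices are distant also in S} these sets cover $M$. By local doubling together with \eqref{disjointballs}, the cover has bounded overlap. Moreover, by compatibility and local doubling, $\rho(U_v)\simeq\rho(B_M(v,1))\simeq\mu(v)$. Set $F(v)=f_{U_v}$, and let $V_R=\{v:U_v\cap B\neq\emptyset\}$. For $v\in V_R$ we have $d_M(v,v_0)\le R+2D$, so by \eqref{Condition 1 of 1-q isom} $V_R\subset B_{\mathcal G}(v_0,R')$ with $R'=R+2D+k_2\le CR$.

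Since $\int_B|f-f_B|^p\,\mathrm d\rho\le 2^p\int_B|f-c|^p\,\mathrm d\rho$ for any constant $c$, I take $c=F_{B_{\mathcal G}(v_0,R')}$ and split
\[
\int_B|f-c|^p\,\mathrm d\rho\le C\sum_{v\in V_R}\int_{U_v}|f-f_{U_v}|^p\,\mathrm d\rho+C\sum_{v\in V_R}|F(v)-c|^p\rho(U_v).
\]
The first sum is a local term. Because $D<R_{\mathrm{loc}}$, the local Poincaré inequality applies on each $U_v$ and gives a bound by $C\int_{\lambda U_v}g_f^p\,\mathrm d\rho$. Bounded overlap of the dilated balls then lets me sum these. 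The second sum is at most $C\sum_{v\in B_{\mathcal G}(v_0,R')}|F(v)-F_{B_{\mathcal G}(v_0,R')}|^p\mu(v)$, which the graph Poincaré inequality iii) bounds by $CR^p\sum_{v\in B_{\mathcal G}(v_0,R')}|\nabla F(v)|^p\mu(v)$.

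The key step is bounding the discrete gradient. If $u\sim v$, then $d_M(u,v)\le 1+k_2$, so $U_u\cup U_v\subset B_{uv}:=B_M(v,D+1+k_2)$. The radius of $B_{uv}$ is at most $R_{\mathrm{loc}}$ after using $R_{\mathrm{loc}}>2D+k_2$; if this fails in the edge case I would enlarge $D$ harmlessly, or insert an intermediate ball. Comparing each average with $f_{B_{uv}}$ and using the measure comparability of $U_u$, $U_v$ and $B_{uv}$ gives
\[
|F(u)-F(v)|^p\le C\rho(B_{uv})^{-1}\int_{B_{uv}}|f-f_{B_{uv}}|^p\,\mathrm d\rho\le C\mu(v)^{-1}\int_{\lambda B_{uv}}g_f^p\,\mathrm d\rho.
\]
Bounded geometry (finitely many neighbours) then gives $|\nabla F(v)|^p\mu(v)\le C\int_{\lambda B_{uv}}g_f^p\,\mathrm d\rho$. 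Summing over $v$ with bounded overlap, and using $R\ge R_{\mathrm{loc}}\gtrsim 1$ to absorb the local term into $CR^p$, yields the claim.

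The main obstacle is the bookkeeping of supports. All the balls $\lambda B_{uv}$ for $v\in B_{\mathcal G}(v_0,R')$ must lie inside $B_M(x_0,R+\mathcal C_0)$. A point of such a ball is at distance at most $D+(R'+k_2)+\lambda(D+1+k_2)$ from $x_0$. To make this match exactly $\mathcal C_0=(\lambda+2)k_2+(2\lambda+3)D$, I would need to choose the cells and the radii carefully (for instance, radius $D$ balls and neighbour distance controlled via $k_2$). I would first verify the inclusion with crude constants and then tune the radii.
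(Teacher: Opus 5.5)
Your proposal is correct. Its second half is essentially the paper's argument: you control $|F(u)-F(v)|$ for $u\sim v$ through one ball centred at $v$ of radius $D+1+k_2\le 2D+k_2$, then use the local inequality, bounded geometry and bounded overlap.

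The genuine difference is the first step.

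\textbf{The paper's route.} The paper starts from Jensen's double integral $\int_{B_M}\int_{B_M}|f(x)-f(y)|^p\,\mathrm d\rho(x)\,\mathrm d\rho(y)/\rho(B_M)$ and splits it over pairs of cells $(v,w)$. This creates the normalising factor $\sum_{v\in V_{B_M}}\rho(B_M(v,D))/\rho(B_M)$ in both the local and the discrete terms. That factor must be bounded uniformly in the radius, which is why the paper first proves Lemma~\ref{lem:measball}. This is the additive volume comparability $\rho(B_M(x,R+K))\simeq_K\rho(B_M(x,R))$, obtained from the $1$-quasi-isometry, compatibility and \eqref{RR+1}.

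\textbf{Your route.} You use $\int_B|f-f_B|^p\,\mathrm d\rho\le 2^p\int_B|f-c|^p\,\mathrm d\rho$, with $c$ the $\mu$-mean of $F$ on the enlarged graph ball, and split pointwise cell by cell. No quotient by $\rho(B)$ ever appears, so Lemma~\ref{lem:measball} is unnecessary. The $1$-quasi-isometry enters only through ball inclusions, which is exactly what produces an \emph{additive} enlargement of the ball on the right-hand side.

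\textbf{Comparison.} Your argument is shorter and relies on fewer structural facts. In exchange, the paper's route also records the comparison \eqref{4.31} between discrete and continuous ball volumes, which is natural information but not needed for this theorem.

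Two minor remarks.
\begin{enumerate}
\item The ``bookkeeping obstacle'' you anticipate does not arise. Your crude bound $R+3D+2k_2+\lambda(D+1+k_2)$ is already at most $R+\mathcal C_0$ once $D\ge 1$, which is the normalisation the paper itself adopts. The edge case $D<1$, where $D+1+k_2$ could exceed $R_{\mathrm{loc}}$, is an imprecision shared with the paper's ``without loss of generality $D\ge1$''.
\item Since $D$ is only a supremum, open balls of radius exactly $D$ need not cover $B$. Use closed balls, as the paper does, or radius $D+\varepsilon$, which the strict inequality $R_{\mathrm{loc}}>2D+k_2$ accommodates.
\end{enumerate}
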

Before we prove the theorem we need the following technical lemma.
\begin{lemma}\label{lem:measball}
    Under the hypotheses of Theorem \ref{transference} the following holds: for every $R>1$   \begin{align}\label{4.31}
        \mu(B_\mathcal{G}(v,R)) \simeq \rho(B_M(v,R)) \qquad \forall v\in \mathcal{G}.
    \end{align}
      Moreover, for every $x  \in M$, $R >1$ and $K>0$ we have
      \begin{align}\label{4.32}
     \rho(B_M(x,R+K)) \simeq_{K} \rho(B_M(x,R)).
    \end{align}
\end{lemma}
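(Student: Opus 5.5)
We will prove \eqref{4.31} by comparing both sides with the sum of $\mu$ over vertices, using the compatibility condition iv) to pass between $\mu(u)$ and $\rho(B_M(u,1))$. Then we will deduce \eqref{4.32} from \eqref{4.31}, uniform density, and local doubling.

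For the upper bound in \eqref{4.31}, fix $v\in\mathcal G$ and $R>1$. By uniform density \eqref{distinct vertices are distant also in S}, every point of $B_M(v,R)$ lies within $D$ of some vertex $u$. By \eqref{Condition 1 of 1-q isom} such a $u$ satisfies $d_{\mathcal G}(u,v)\le R+D+k_2$. Hence
\[
\rho(B_M(v,R))\le \sum_{u\in B_{\mathcal G}(v,R+D+k_2)}\rho(B_M(u,D)).
\]
Local doubling of $\rho$ (up to scale $\max\{D,1\}$) together with compatibility gives $\rho(B_M(u,D))\simeq\rho(B_M(u,1))\simeq\mu(u)$. Therefore
\[
\rho(B_M(v,R))\lesssim\mu(B_{\mathcal G}(v,R+D+k_2)).
\]
It remains to absorb the additive enlargement on the graph side. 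Since $R>1$, we have $R+D+k_2\le CR$. Iterating local doubling of $\mu$ would lose a constant per doubling step, which is too costly at large scales. Instead we use the quasi-flow-free argument: every $w\in B_{\mathcal G}(v,R+K)$ lies within graph distance $K$ of some $w'\in B_{\mathcal G}(v,R)$, namely the point on a geodesic from $v$ to $w$. We also have $\mu(w)\simeq_K\mu(w')$, since adjacent vertices have comparable measure. Bounded geometry bounds the multiplicity $\#B_{\mathcal G}(w',K)$. Summing over $w'$ gives $\mu(B_{\mathcal G}(v,R+K))\lesssim_K\mu(B_{\mathcal G}(v,R))$.

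For the lower bound in \eqref{4.31}, we use uniform separation \eqref{disjointballs}: the balls $B_M(u,d)$, $u\in\mathcal G$, are pairwise disjoint. For $u\in B_{\mathcal G}(v,R)$ we have $d_M(u,v)\le R+k_2$, so $B_M(u,d)\subset B_M(v,R+k_2+d)$. By local doubling and compatibility,
\[
\mu(B_{\mathcal G}(v,R))\simeq\sum_u\rho(B_M(u,d))\le\rho(B_M(v,R+k_2+d)).
\]
Once again we must remove the additive enlargement, now on the continuous side. This is exactly \eqref{4.32} with $K=k_2+d$ and centre $v$, so the two statements are intertwined.

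We therefore prove \eqref{4.32} directly, using the same covering trick in $M$. We write $B_M(x,R+K)$ as the union of the sets $B_M(y,K)$ with $y\in B_M(x,R)$. Choose a maximal $d$-separated family, or simply the vertices $u$ with $d_M(u,x)\le R+D$. Each point of $B_M(x,R+K)$ is within $K+D$ of such a vertex, so
\[
\rho(B_M(x,R+K))\le\sum_{u}\rho(B_M(u,K+D)).
\]
Local doubling gives $\rho(B_M(u,K+D))\lesssim_K\rho(B_M(u,d))$. For the vertices $u$ at distance $\le R-d$ from $x$, the disjoint balls $B_M(u,d)$ lie in $B_M(x,R)$. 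The remaining vertices in the shell $R-d<d_M(u,x)\le R+D$ are each within a bounded distance of an interior vertex $u'$: move along a geodesic of $M$ toward $x$, which requires $R$ bigger than a fixed constant. Here $\rho(B_M(u,d))\simeq\rho(B_M(u',d))$ by local doubling. The number of shell vertices assigned to each $u'$ is bounded by separation and local doubling. Summing yields $\rho(B_M(x,R+K))\lesssim_K\rho(B_M(x,R))$.

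The main obstacle is precisely this absorption of additive constants without paying a factor exponential in $R$. We resolve it by the shell-to-interior reassignment in both $M$ and $\mathcal G$, which uses only local comparability and bounded multiplicity. The one point needing care is the geodesic step toward the centre. Should $M$ fail to be geodesic, we instead route through the graph via the quasi-isometry, pick the interior vertex on a graph geodesic, and handle the small-radius range $1<R\le C$ directly by local doubling.
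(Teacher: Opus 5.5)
Your proof is correct. It matches the paper on the upper half of \eqref{4.31} but takes a different route for the lower half and for \eqref{4.32}.

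\textbf{Where you agree with the paper.} For the upper bound you cover $B_M(v,R)$ by the balls $B_M(u,D)$, $u\in B_{\mathcal G}(v,R+D+k_2)$. You then absorb the additive enlargement on the graph by reassigning each far vertex to a nearer one along a geodesic. This is exactly \eqref{RR+1} iterated, as in the paper.

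\textbf{Where you differ.} For the lower bound the paper never enlarges the continuous ball. Instead it shrinks the graph ball first, $\mu(B_{\mathcal G}(v,R))\lesssim\mu(B_{\mathcal G}(v,R-k_2-d))$, by the same graph argument. It then uses disjointness of the balls $B_M(u,d)$ to get $\mu(B_{\mathcal G}(v,R-k_2-d))\lesssim\rho(B_M(v,R))$. This sandwiches $\rho(B_M(v,R))$ between two graph balls of comparable measure. Hence \eqref{4.32} comes for free at vertex centres, and at arbitrary centres via a nearest vertex. Only one absorption mechanism is needed, and it runs on the graph, where geodesics always exist.

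You instead enlarge to $B_M(v,R+k_2+d)$ and prove \eqref{4.32} intrinsically in $M$ by a shell-to-interior reassignment. That works, and it gives \eqref{4.32} directly at every centre $x\in M$.

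\textbf{Two caveats.}
\begin{itemize}
\item Theorem \ref{transference} does not assume $M$ geodesic, and the paper treats $S$ as geodesic only conditionally (Corollary \ref{COR:GROMOV}). So your graph-routing fallback is the argument that actually carries the proof, and at that point it is essentially the paper's graph argument in another guise. Shrinking the graph ball, as the paper does, would have spared you the detour.
\item Without a length structure, your first covering step also fails. That step claims every point of $B_M(x,R+K)$ lies within $K+D$ of a vertex $u$ with $d_M(u,x)\le R+D$. The fix is to cover by $B_M(u,D)$ with $d_M(u,x)<R+K+D$ and widen the shell by $K$. This only changes constants depending on $K$.
\end{itemize}
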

\begin{proof}
Assume without loss of generality that $R \in \mathbb N$. First notice that
    \begin{align}\label{RR+1}
         \mu(B_\mathcal{G}(v, R+1))= \mu(B_\mathcal{G}(v, R))+\sum_{y \in S_\mathcal{G}(v,R+1)} \mu(y) \le C\mu(B_\mathcal{G}(v, R)),
    \end{align}
   where $S_{\mathcal{G}}(v,R+1)$ denotes the sphere centered at $v$ with radius $R+1$ and we have used that, since $\mu$ is locally doubling, every vertex has at most a fixed number of neighbours and that $\mu(y) \simeq \mu(x)$ if $x \sim y$.

    Moreover, for every $R>k_2$ and $v_0 \in \mathcal{G}$  we have
\begin{align*}
   B_\mathcal{G}(v_0, R-k_2) \subset B_M(v_0, R) \subset \bigcup_{v \in B_\mathcal{G}(v_0, R+D+k_2)} B_M(v, D).
\end{align*}
Thus, for every $R>k_2+d$,
\begin{align*} \mu(B_\mathcal{G}(v_0, R-k_2-d))&\le \sum_{y \in B_M(v_0, R-d)\cap \mathcal{G}} \mu(y) \\&\le C   \sum_{y \in B_M(v_0, R-d) \cap \mathcal{G}} \rho(B_M(y,d)) \\&\le C  \rho(B_M(v_0, R)) \\
&\le C\sum_{y\in B_\mathcal{G}(v_0, R+D+k_2)}  \rho(B_M(y, D)) \\
&\le C \sum_{y\in B_\mathcal{G}(v_0, R+D+k_2)}  \mu(y)
\\&\le C \mu(B_\mathcal{G}(v_0, R+D+k_2)),
\end{align*} where we have used that $\{B_M(y,d)\}_{y \in \mathcal{G}}$ are disjoint and that $\rho$ and $\mu$ are compatible.
    Since the measures in the left and right hand side are comparable by \eqref{RR+1}, \eqref{4.31} follows. Moreover, for  every $K>0$ $$\rho(B_M(v_0,R))\simeq_K\rho(B_M(v_0,R+K))\qquad \forall R>k_2+d.$$ 
The remaining range $1<R\leq k_2+d$ follows from the local doubling
property.
To derive \eqref{4.32} for an arbitrary center $x \in M$, it suffices to use the fact that $\mathcal{G}$ is a discretisation of $M$. 
\end{proof}
\begin{proof}[Proof of Theorem \ref{transference}]
Let $k_2,D$ and $2d$ as in \eqref{Condition 1 of 1-q isom}, \eqref{distinct vertices are distant also in S}, and \eqref{disjointballs}. Up to replacing $D$, we can assume without loss of generality that $D\ge 1.$
    
Given $s \in M$, we denote by $\overline{s} \in \mathcal{G}$ a vertex such that $d_{M}(s,\overline{s}) \leq D.$ We can assume  without loss of generality that the radius of the ball is $r \ge 1$. We have that for every $s\in M$
	\begin{align*}
		B_M(s,r) \subset B_M(\overline{s},r+D), \qquad 	B_M(\overline{s},r) \subset B_M(s,r+D).
	\end{align*}
	Conditions $i)$ and $ii)$ imply that there is a constant $k_2>0$ such that if $v \in \mathcal{G}$ and $R >k_2$,
	\begin{align}\label{Containments of balls in SW}
		 B_\mathcal{G}(v,R-k_2) \subset \mathcal{G} \cap B_M(v,R) \subset 	 B_\mathcal{G}(v,R+k_2).
	\end{align}
	Let $x \in M$  and set $B_M= B_M(x,r)$. Define
	\begin{align*}
V_{B_M} = \set{v \in \mathcal{G} \colon d_M(v,B_M) \leq D}.
	\end{align*}
	We have 
	\begin{align}\label{Inequ BS in union of Bv1}
		B_M \subset \bigcup_{v \in V_{B_M}} \overline{B_M(v,D)}.
	\end{align}
	Indeed, if $z \in 	B_M$, then there is  $\overline{z} \in V_{B_M}$ such that $z \in B_M(\overline{z},D)$.
  Moreover, for every $v \in V_{B_M}$
	\begin{align}\label{dist2 of v from x leq r+1}
		d_M(x,v) \leq r+D.
	\end{align} Moreover, for every  $y \in B_M$
    \begin{align*}
        d_M(x,v) \le d_M(x,y)+d_M(y,v) \le r+d_M(v,y),
    \end{align*} and $\inf_{y\in B_M} d_M(y,v) \le D$ yields \eqref{dist2 of v from x leq r+1}.
	Hence, for every $v \in V_{B_M}$ the triangle inequality implies
	\begin{align}\label{containment epsilon}
	B_M(v,\epsilon)\subset B_M(x,r+D+\epsilon) \quad \forall \epsilon >0,
	\end{align}
	and
	\begin{align}\label{union depending on epsilon}
		\bigcup_{v \in V_{B_M}} B_M(v,\epsilon) \subset B_M(x,r+D+\epsilon) \quad \forall \epsilon>0.
	\end{align}
By the local doubling property of $(M,d_M,\rho)$  and a standard covering argument for discretisations in locally doubling metric measure spaces, for every $r>0$ there exists $\kappa_r>0$ such that
	\begin{align}\label{Capital M Local doubl S}
		\sum_{v \in \mathcal{G}} \mathds{1}_{B_M(v, r)} (z)\leq \kappa_r\quad \forall z\in M.
	\end{align}
By Jensen's inequality and by \eqref{Inequ BS in union of Bv1},
	\begin{align}\label{Inequality at beginning of proof of glob Poinc}
	 \nonumber	\int_{B_M} |f - f_{B_M}|^p \mathrm{d}\rho  
	 	&\leq \int_{B_M} \int_{B_M} |f(x) - f(y)|^p \mathrm{d}\rho(x) \frac{\mathrm{d}\rho(y)}{\rho(B_M)}\\ &\leq C \sum_{v \in V_{B_M}} \sum_{w \in V_{B_M}} \int_{B_M(v,D)} \int_{B_M(w,D)} |f(x)-f(y)|^p \mathrm{d}\rho(x) \frac{\mathrm{d}\rho(y)}{\rho(B_M)}.
	 	\end{align}
	 	For $v,w \in V_{B_M}$,
	 	\begin{align*}
	\int_{B_M(v,D)}& \int_{B_M(w,D)} |f(x)-f(y)|^p \mathrm{d}\rho(x) \frac{\mathrm{d}\rho(y)}{\rho(B_M)} \\
	&\leq   2^{2p-2}\int_{B_M(v,D)} \int_{B_M(w,D)} |f(x) - f_{B_M(w,D)}|^p\mathrm{d}\rho(x) \frac{\mathrm{d}\rho(y)}{\rho(B_M)}\\
	 	&+ 2^{2p-2}\int_{B_M(v,D)} \int_{B_M(w,D)} |f(y) - f_{B_M(v,D)}|^p\mathrm{d}\rho(x) \frac{\mathrm{d}\rho(y)}{\rho(B_M)}\\
	 	&+ 2^{2p-2} \int_{B_M(v,D)} \int_{B_M(w,D)}  |f_{B_M(v,D)} - f_{B_M(w,D)}|^p\mathrm{d}\rho(x) \frac{\mathrm{d}\rho(y)}{\rho(B_M)}\\
	 	& = 2^{2p-2}  \frac{\rho(B_M(v,D))}{\rho(B_M)} \int_{B_M(w,D)} |f(x) - f_{B_M(w,D)}|^p\mathrm{d}\rho(x)\\
	 	&+  2^{2p-2} \frac{\rho(B_M(w,D))}{\rho(B_M)}  \int_{B_M(v,D)} |f(y) - f_{B_M(v,D)}|^p\mathrm{d}\rho(y) \\
	 	&+ 2^{2p-2} |f_{B_M(v,D)} - f_{B_M(w,D)}|^p \frac{\rho(B_M(v,D))  \rho(B_M(w,D))}{\rho(B_M)}.
	 \end{align*}
	 By  $iii)$, for every $w  \in V_{B_M}$,
	 \begin{align*}   \int_{B_M(w,D)} |f(x) - f_{B_M(w,D)}|^p\mathrm{d}\rho(x) \leq C   \int_{B_M(w,\lambda D)} |g_f |^p\mathrm{d}\rho,
	\end{align*}
which implies by \eqref{Inequality at beginning of proof of glob Poinc} that
\begin{align}\label{Inequality second in proof of Poinc}
\nonumber	\int_{B_M} |f - f_{B_M}|^p \mathrm{d}\rho   &\leq C2^{2p-1} \frac{\sum_{v \in V_{B_M}}\rho(B_M(v,D))}{\rho(B_M)} \sum_{w \in V_{B_M}}   \int_{B_M(w,\lambda D)} |g_f |^p\mathrm{d}\rho \\
\nonumber	&+ C2^{2p-2} \sum_{v \in V_{B_M}} \sum_{w \in V_{B_M}}   |f_{B_M(v,D)} - f_{B_M(w,D)}|^p \frac{\rho(B_M(v,D))  \rho(B_M(w,D))}{\rho(B_M)} \\ 
&\le I+II. 
\end{align}
By  the local doubling property on $M$, there exists $C>0$ such that for every vertex $v$
\begin{align*}
	 \rho(B_M(v,D)) \leq C \rho(B_M(v,d)).
\end{align*}
Then, since the $B_M(v,d)$ are disjoint, by \eqref{union depending on epsilon} with $\epsilon = d$
\begin{align}\label{Inequality sum of meas wrt meas of ball}
	 \frac{\sum_{v \in V_{B_M}}\rho(B_M(v,D))}{\rho(B_M)} & \leq  C\frac{\sum_{v \in V_{B_M}}\rho(B_M(v,d))}{\rho(B_M)} \\&\nonumber\leq  C\frac{\rho\left(\bigcup_{v \in V_{B_M}} B_M(v,d)\right)}{ \rho(B_M)}\\
	\nonumber &\leq  C\frac{\rho(B_M(x, r+d+ D))}{  \rho(B_M)} \\&\nonumber\leq C,
\end{align} by Lemma \ref{lem:measball}.
Moreover, by \eqref{containment epsilon} with $\epsilon =\lambda D$ and by \eqref{Capital M Local doubl S}
\begin{align*}
	 I&\le C\sum_{w \in V_{B_M}}   \int_{B_M(w,\lambda D)} |g_f |^p\mathrm{d}\rho \\&=  C\sum_{w \in V_{B_M}}   \int_{B_M(x,r+2\lambda D)} \mathds{1}_{B_M(w,\lambda D)}  |g_f |^p\mathrm{d}\rho \\&=C\int_{B_M(x,r+2\lambda D)} \sum_{w \in V_{B_M}}    \mathds{1}_{B_M(w,\lambda D)}  |g_f |^p\mathrm{d}\rho \\
	  &\leq C\kappa_{\lambda D} \int_{B_M(x,r+2\lambda D)}   |g_f |^p\mathrm{d}\rho.
\end{align*}
Thus, to conclude the proof it suffices to bound $II$. \\ Define the function $F(v)= f_{B_M(v,D)}$ for $v \in \mathcal{G}
	$ and 	set  $\mathcal{m}= F_{B_\mathcal{G}(\overline{x}, r+2D+k_2)}$. Then
	\begin{align*}
		 |f_{B_M(v,D)} - f_{B_M(w,D)}|^p &= |F(v) - F(w)|^p \leq 2^{p-1} (|F(v) - \mathcal{m}|^p + | F(w) - \mathcal{m}|^p).
	\end{align*}
 	Since for every vertex $z$ we have  $\rho(B_M(z,D)) \simeq \mu(z)$ uniformly in $z$,  by \eqref{Inequality sum of meas wrt meas of ball}
	\begin{align}\label{Inequality most difficult term in Poinc proof}
        \nonumber II&\leq C2^p  	\sum_{v \in V_{B_M}}  |F(v) - \mathcal{m}|^p \rho(B_M(v,D))   \sum_{w \in V_{B_M}}  \frac{ \rho(B_M(w,D))}{\rho(B_M)} \\
		&\leq C2^p  \sum_{v \in V_{B_M}}  |F(v) - \mathcal{m}|^p \mu(v).
	\end{align}
	Recall that $d_M(\overline{x},x) \le D$, hence, for every $v \in V_{B_M}$, by \eqref{Condition 1 of 1-q isom}, \eqref{dist2 of v from x leq r+1} 
	\begin{align*}
	d_\mathcal{G}(v,\overline{x}) &\leq d_M(v, \overline{x}) + k_2 \leq d_M(v,x) + d_M(x,\overline{x})+k_2\le r+2D+k_2.
	\end{align*} It follows that
    \begin{align*}
		V_{B_M} \subset B_\mathcal{G}(\overline{x},  r+2D+k_2).
	\end{align*}
Thus, by assumption $iii)$,
\begin{align}\label{(A)}
	\nonumber\sum_{v \in V_{B_M}}  |F(v)-\mathcal{m}|^p \mu(v)& \leq 	\sum_{v \in B_\mathcal{G}(\overline{x},  r+2D+k_2)}  |F(v)-\mathcal{m}|^p \mu(v) \\
	&\leq C (r+2D+k_2)^p \sum_{v \in B_\mathcal{G}(\overline{x},   r+2D+k_2 )} |\nabla F(v) |^p \mu(v),
\end{align}
where 
\begin{align*}
	|\nabla F(v) | = \sum_{y \sim v} |F(v) - F(y)|.
\end{align*}
Given $y \sim v$, for every $c \in \mathbb C$,
\begin{align*}
 |F&(v) - F(y)|\\ &\leq \abs{\frac{1}{\rho(B_M(v,D))} \int_{B_M(v,D)} f \mathrm{d}\rho - \frac{1}{\rho(B_M(y,D))} \int_{B_M(y,D)} f \mathrm{d}\rho  } \\
 &= \abs{\frac{1}{\rho(B_M(v,D))} \int_{B_M(v,D)} (f -c)\mathrm{d}\rho - \frac{1}{\rho(B_M(y,D))} \int_{B_M(y,D)} (f-c) \mathrm{d}\rho  }\\
 &\leq  \frac{1}{\rho(B_M(v,D))} \int_{B_M(v,D)} |f -c|\mathrm{d}\rho +  \frac{1}{\rho(B_M(y,D))} \int_{B_M(y,D)} |f-c| \mathrm{d}\rho\\
 & \leq  \frac{C}{\rho(B_M(v,D))} \left(  \int_{B_M(v,D)} |f -c|\mathrm{d}\rho +   \int_{B_M(y,D)} |f-c| \mathrm{d}\rho \right)\\
 &\leq \frac{2C}{\rho(B_M(v,D))} \int_{B_M(v,2D+k_2)} |f-c|\mathrm{d}\rho,
\end{align*}
	 since $y \sim v$ implies that $\rho(B_M(y,D)) \simeq \rho(B_M(v,D))$  uniformly in $v$ and that $B_M(y,D) \subset B_M(v,2D+k_2)$  by \eqref{Condition 1 of 1-q isom}. Finally,  using that $\mathcal{G}$ has bounded geometry,  
	 \begin{align*}
	 	|\nabla  F(v) |  &\leq \frac{C}{\rho(B_M(v,D))} \int_{B_M(v,2D+k_2)} |f-c|\mathrm{d}\rho.
	 \end{align*}
	 By Hölder's inequality, 
	 \begin{align*}
	 		|\nabla  F(v) |^p  \mu(v) &\leq \frac{C\rho(B_M(v,D))^{p/p'}\mu(v)}{\rho(B_M(v,D))^p} \int_{B_M(v,2D+k_2)} |f-c|^p\mathrm{d}\rho\\ &= \frac{C  \mu(v)}{\rho(B_M(v,D))} \int_{B_M(v,2D+k_2)} |f-c|^p\mathrm{d}\rho \\
	 		&\leq C \int_{B_M(v,2D+k_2)} |f-c|^p\mathrm{d}\rho,
	 \end{align*}
	  since $\rho(B_M(v,D)) \simeq \mu(v)$.  If we choose $c= f_{B_M(v,2D+k_2)}$, by $iii)$,
	 \begin{align}\label{(B)}
	 |\nabla F(v) |^p \mu(v) \leq    C \int_{B_M(v,\lambda(2D+k_2))} |g_f|^p\mathrm{d}\rho.
	 \end{align}
	Thus, \eqref{(A)} and \eqref{(B)} imply
	\begin{align*}
		 	II&\leq C r^p \sum_{v \in B_\mathcal{G}(\overline{x},  (r+2D+k_2))}   \int_{B_M(v,\lambda(2D+k_2))} |g_f|^p\mathrm{d}\rho.
	\end{align*}
We have that
	\begin{align}\label{last containment of balls}
		 \bigcup_{v \in B_\mathcal{G}(\overline{x},  r+2D+k_2 )} B_M(v,\lambda(2D+k_2))  \subset B_M(x, r +(\lambda+2)k_2 + (2\lambda+3)D).
	\end{align}
	In fact, if $v \in B_\mathcal{G}(\overline{x},  r+2D+k_2)$ and $z \in B_M(v,\lambda(2D+k_2))$, then
	\begin{align*}
		d_M(z,x) &\leq d_M(z,v) + d_M(v,x) \\&\leq \lambda(2D+k_2) +d_M(v,\overline{x}) + d_M(\overline{x},x) \\&\leq \lambda(2D+k_2) +D + d_M(v,\overline{x}).
	\end{align*}
	By \eqref{Condition 1 of 1-q isom},
	\begin{align*}
		d_M(v,\overline{x}) \leq d_\mathcal{G}(v, \overline{x} ) + k_2 \leq r+2D+k_2 +k_2 = r+2D+2k_2,
	\end{align*}
so that \eqref{last containment of balls} follows.
	By \eqref{Capital M Local doubl S},
	\begin{align*}
	& \sum_{v \in B_\mathcal{G}(\overline{x},  r+2D+k_2)}   \int_{B_M(v,\lambda(2D+k_2))} |g_f|^p\mathrm{d}\rho \\
	 &\leq    \int_{  B_M(x, r +(\lambda+2)k_2 + (2\lambda+3)D) }  \sum_{v \in \mathcal{G}}  \mathds{1}_{B(v, \lambda(2D+k_2))}(z)   |g_f(z)|^p\mathrm{d}\rho(z)\\
	 &\leq \kappa_{\lambda(2D+k_2)} \int_{  B_M(x, r  +(\lambda+2)k_2 + (2\lambda+3)D) }   |g_f|^p\mathrm{d}\rho.
	\end{align*}
	For $R\le R_{\mathrm{loc}}$, the desired weak Poincaré inequality follows from the assumed local Poincaré inequality on $M$. For $R>R_{\mathrm{loc}}$, the preceding argument gives the large-scale estimate. Hence $M$ supports a global weak $L^p$-Poincaré inequality.
	 
\end{proof}
We now verify that the hyperbolic-type space $(S,d_S,\rho)$ and the spiderweb $\mathcal{SW}_N$ satisfy
the assumptions of Theorem~\ref{transference}. 
\begin{definition}
    Assume that the measure $\mu_N$ on $N$ is doubling and define on $S=N\times \mathbb R_+$ the measure
\begin{align}\label{defrho}
\mathrm{d}\rho(x,a)=\frac{\mathrm{d}\mu_N(x)\,\mathrm{d}a}{a}.
\end{align}
For every vertex $z_Q\in\mathcal{SW}_N$, set
\begin{align}\label{defmu}
\mu(z_Q)=\rho(B_S(\Psi(z_Q),1)),
\end{align}
where $\Psi(z_Q)=(c_Q,\eta^{\mathcal{h}(Q)})$ is the natural embedding of $\mathcal{SW}_N$ into $S$.
\end{definition}

We begin by studying
the geometry and measure of balls in $S$. In particular, the next
lemma implies that $\rho$ is locally doubling.
\begin{lemma}\label{lem:ball-measure}
Assume that $(N,d_N, \mu_N)$ is a metric measure space, set $S=N \times \mathbb R_{+}$ and equip $S$ with the metric \eqref{eq:hyperbolic-formula} and the measure defined in \eqref{defrho}.  
Then, for every $R>0$, there exist constants
$c_R,C_R>0$, depending only on $R$, such that for every
$(x_0,a_0)\in S$,

\[
B_N(x_0,c_Ra_0)
\times
\bigl(a_0e^{-R/(2C_N)},a_0e^{R/(2C_N)}\bigr)
\subset
B_S((x_0,a_0),R)
\]
and
\[
B_S((x_0,a_0),R)
\subset
B_N(x_0,C_Ra_0)
\times
\bigl(a_0e^{-R/C_N},a_0e^{R/C_N}\bigr),
\]
where $C_N$ is the constant appearing in \eqref{eq:hyperbolic-formula}.
Moreover,

\[
\frac{R}{C_N}\,
\mu_N(B_N(x_0,c_Ra_0))
\le
\rho(B_S((x_0,a_0),R))
\le
\frac{2R}{C_N}\,
\mu_N(B_N(x_0,C_Ra_0)).
\] In particular, if $\mu_N$ is doubling, then $\rho$ is locally doubling.
\end{lemma}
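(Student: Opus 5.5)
We work directly from the explicit formula \eqref{eq:hyperbolic-formula}. Write $t=d_S((x,a),(x_0,a_0))/C_N$, so that
\[
\cosh t = 1+\frac{d_N(x,x_0)^2+(a-a_0)^2}{2aa_0}.
\]
The two inclusions will be proved by bounding the horizontal and vertical contributions to this quantity separately, and the measure estimates will then follow from Fubini, since $\rho$ is a product measure.

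\textbf{Outer inclusion.} Let $(x,a)\in B_S((x_0,a_0),R)$. Dropping the horizontal term gives
\[
\frac{(a-a_0)^2}{2aa_0}\le \cosh(R/C_N)-1.
\]
This is equivalent to $\cosh(\log(a/a_0))\le\cosh(R/C_N)$, hence $|\log(a/a_0)|\le R/C_N$. For the horizontal part we use
\[
d_N(x,x_0)^2\le 2aa_0(\cosh(R/C_N)-1)\le 2a_0^2e^{R/C_N}(\cosh(R/C_N)-1),
\]
so we may take $C_R=\sqrt{2e^{R/C_N}(\cosh(R/C_N)-1)}$, and the inclusion holds.

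\textbf{Inner inclusion.} Suppose $|\log(a/a_0)|<R/(2C_N)$ and $d_N(x,x_0)<c_Ra_0$. Then
\[
\frac{(a-a_0)^2}{2aa_0}=\cosh(\log(a/a_0))-1<\cosh(R/(2C_N))-1,
\]
and
\[
\frac{d_N(x,x_0)^2}{2aa_0}<\frac{c_R^2e^{R/(2C_N)}}{2}.
\]
We choose $c_R>0$ small enough that
\[
\frac{c_R^2e^{R/(2C_N)}}{2}\le \cosh(R/C_N)-\cosh(R/(2C_N)),
\]
which is possible because the right-hand side is strictly positive. Summing the two bounds gives $\cosh t<\cosh(R/C_N)$, i.e.\ $d_S<R$.

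\textbf{Measure estimate.} Integrate $\mathrm{d}\mu_N\,\mathrm{d}a/a$ over each product set and use $\int_{a_0e^{-s}}^{a_0e^{s}}\mathrm{d}a/a=2s$. With $s=R/(2C_N)$ this gives the lower bound $\frac{R}{C_N}\mu_N(B_N(x_0,c_Ra_0))$, and with $s=R/C_N$ it gives the upper bound $\frac{2R}{C_N}\mu_N(B_N(x_0,C_Ra_0))$.

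\textbf{Local doubling.} Fix $R_0$ and take $r\le R_0$. Apply the upper estimate at radius $2r$ and the lower estimate at radius $r$:
\[
\rho(B_S(2r))\le \frac{4r}{C_N}\,\mu_N\big(B_N(x_0,C_{2r}a_0)\big),
\qquad
\rho(B_S(r))\ge \frac{r}{C_N}\,\mu_N\big(B_N(x_0,c_ra_0)\big).
\]
The factor $r$ cancels in the ratio, and we need $\mu_N(B_N(x_0,C_{2r}a_0))\le C\,\mu_N(B_N(x_0,c_ra_0))$. By doubling of $\mu_N$ this holds with a constant controlled by $\log_2(C_{2r}/c_r)$. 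Since $\mu_N$ is doubling, this constant depends on $a_0$ only through the scale-invariant ratio $C_{2r}/c_r$, so it is uniform in the centre $(x_0,a_0)$.

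\textbf{Main obstacle.} The delicate point is small $r$. Here $c_r\sim r$ and $C_{2r}\sim r$ as $r\to0$; one checks this from $\cosh u-1\sim u^2/2$, choosing $c_r$ comparable to $r/C_N$. Hence the ratio $C_{2r}/c_r$ stays bounded on $(0,R_0]$, and the doubling constant is uniform there. We will verify this asymptotic explicitly when fixing $c_r$.
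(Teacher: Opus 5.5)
Your proof is correct and follows the paper's route. You rewrite $d_S<R$ through the explicit formula, bound the vertical and horizontal terms separately to get the two product-set inclusions, integrate $\mathrm{d}\mu_N\,\mathrm{d}a/a$ over those sets, and then compare the radii $C_{2r}a_0$ and $c_ra_0$ using the doubling of $\mu_N$.

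The differences are refinements rather than a new approach. The identity $\frac{(a-a_0)^2}{2aa_0}=\cosh(\log(a/a_0))-1$ gives you explicit $c_R,C_R$, where the paper only uses continuity and positivity of $2s(\cosh(R/C_N)-1)-(s-1)^2$ on $[e^{-R/(2C_N)},e^{R/(2C_N)}]$. Your check that $C_{2r}/c_r$ stays bounded as $r\to0$ makes explicit the uniformity over $r\in(0,R_0]$ that the paper's local-doubling step leaves implicit. One small fix: write $<$ rather than $\le$ in the outer inclusion, since the ball is open and the target interval is open.
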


\begin{proof}
Set
\[
\alpha_R=\frac{R}{C_N},
\qquad
\kappa_R=\cosh(\alpha_R)-1.
\]
By \eqref{eq:hyperbolic-formula} $d_S((x,a),(x_0,a_0))<R$  if and only if

\[
d_N(x,x_0)^2
<
2aa_0\kappa_R-(a-a_0)^2.
\]

Hence
\[
B_S((x_0,a_0),R)
=
\Bigl\{
(x,a):
a_0e^{-\alpha_R}<a<a_0e^{\alpha_R},
\;
d_N(x,x_0)<r_R(a)
\Bigr\},
\]
where $r_R(a)^2=2aa_0\kappa_R-(a-a_0)^2.$\\
Writing $a=a_0s$, we obtain

\[
r_R(a)
=
a_0
\sqrt{
2s\kappa_R-(s-1)^2
},
\]
with $s\in (e^{-\alpha_R},e^{\alpha_R}).$ Since
$f_R(s)=
2s\kappa_R-(s-1)^2$
is continuous on $[e^{-\alpha_R/2},e^{\alpha_R/2}]$ and strictly positive there,
there exist constants $c_R,C_R>0$ such that
\[
c_R
\le
\sqrt{f_R(s)}
\le
C_R \qquad \forall s\in[e^{-\alpha_R/2},e^{\alpha_R/2}].
\]
Therefore,
\[
c_Ra_0
\le
r_R(a)
\le
C_Ra_0
\qquad
\forall a\in
[a_0e^{-\alpha_R/2},a_0e^{\alpha_R/2}].
\]
It follows that
\[
B_N(x_0,c_Ra_0)\times
(a_0e^{-\alpha_R/2},a_0e^{\alpha_R/2})
\subset
B_S((x_0,a_0),R).
\]
On the other hand, after possibly enlarging $C_R$, we also have $r_R(a)\le C_Ra_0$ for every
$a\in(a_0e^{-\alpha_R},a_0e^{\alpha_R})$,
 and thus
\[
B_S((x_0,a_0),R)
\subset
B_N(x_0,C_Ra_0)\times
(a_0e^{-\alpha_R},a_0e^{\alpha_R}).
\]
Integrating with respect to $\mathrm da/a$ yields
\[
\frac{R}{C_N}\,
\mu_N(B_N(x_0,c_Ra_0))
\le
\rho(B_S((x_0,a_0),R)) \le
\frac{2R}{C_N}\,
\mu_N(B_N(x_0,C_Ra_0)).
\]
In particular, if $\mu_N$ is doubling, then for every $R>0$
\begin{align*}
\rho(B_S((x_0,a_0),2R))
&\le
\frac{4R}{C_N}\,
\mu_N(B_N(x_0,C_{2R}a_0)) \\&\le C_R'\frac{4R}{C_N}\mu_N(B_N(x_0,c_{2R}a_0)) \\&\le C''_R\,\rho(B_S((x_0,a_0),R)).
\end{align*}
This concludes the proof.
\end{proof}
The next result shows that the
natural measure induced by $\rho$ on the vertices of $\mathcal{SW}_N$ defined in \eqref{defmu}
is a locally doubling quasi-flow.
\begin{corollary}\label{cor:quasi-flow-doubling}
 The measure $\mu$ defined in \eqref{defmu} is a quasi-flow on $\mathcal{SW}_N$. More precisely, there exists a
constant $C\geq 1$ such that for every cube $Q$ and every $m\in\mathbb N$,
\[
C^{-1}\mu(z_Q)
\leq
\sum_{z_R\in s_m(z_Q)} \mu(z_R)
\leq
C\,\mu(z_Q),
\]
where $s_m(z_Q)$ denotes the set of descendants of $z_Q$ of generation $m$. Moreover, $\mu$ is locally doubling.
\end{corollary}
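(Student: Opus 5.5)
The proof reduces everything to measures of products of balls in $N$ with intervals in $\mathbb R_+$. By Lemma \ref{lem:ball-measure} with $R=1$, for every $Q\in\mathscr D_k$ we have
\[
\mu(z_Q)=\rho(B_S((c_Q,\eta^k),1))\simeq \mu_N(B_N(c_Q,c_1\eta^k))\simeq \mu_N(Q),
\]
with constants independent of $Q$. The first comparison uses that $\mu(z_Q)$ is squeezed between $C_N^{-1}\mu_N(B_N(c_Q,c_R\eta^k))$ and $2C_N^{-1}\mu_N(B_N(c_Q,C_R\eta^k))$. Since $\mu_N$ is doubling, both of these are comparable to $\mu_N(B_N(c_Q,c_1\eta^k))$. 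The second comparison follows from \eqref{inclusions}, since $B_N(c_Q,c_1\eta^k)\subset Q\subset B_N(c_Q,c_2\eta^k)$, together with doubling once more. So the key intermediate claim is
\[
\mu(z_Q)\simeq \mu_N(Q)\qquad \text{uniformly in } Q\in\mathscr D.
\]

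The quasi-flow estimate is then immediate. The set $s_m(z_Q)$ is exactly $\{z_R: R\in\mathscr D_{k-m},\ R\subset Q\}$, because the predecessor map is $z_R\mapsto z_{\widehat R}$ and cubes are nested. These cubes partition $Q$, so
\[
\sum_{z_R\in s_m(z_Q)}\mu(z_R)\simeq\sum_{R\subset Q,\,R\in\mathscr D_{k-m}}\mu_N(R)=\mu_N(Q)\simeq\mu(z_Q).
\]
The constants are the two comparison constants from the previous step, and they are independent of $m$. This gives both the upper and the lower bound.

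For local doubling of $\mu$ on the graph, I will first show that neighbours have comparable mass.
\begin{itemize}
\item[(a)] If $z_Q\sim z_R$ horizontally at height $k$, then $d_N(c_Q,c_R)<2\lambda\eta^k$. Hence $B_N(c_R,c_1\eta^k)\subset B_N(c_Q,(2\lambda+c_1)\eta^k)$, and doubling gives $\mu_N(R)\lesssim\mu_N(Q)$, and symmetrically.
\item[(b)] If $R=\widehat Q$, then $Q\subset R\subset B_N(c_Q,2c_2\eta^{k+1})$, and doubling again gives $\mu_N(R)\simeq\mu_N(Q)$.
\end{itemize}
By iteration, $\mu(z)\simeq_M\mu(w)$ whenever $d_{\mathcal{SW}_N}(z,w)\le M$. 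The graph has bounded degree by Lemma \ref{removerlap}, so $\#B_{\mathcal{SW}_N}(z,M)\le C_M$. Now take $r\le R$. For $r<1$ the ball $B(z,r)$ is $\{z\}$, and for $r\ge1$ it contains $z$; in either case
\[
\mu(B(z,2r))\le \#B(z,2R)\,\max_{w\in B(z,2R)}\mu(w)\le C_R\,\mu(z)\le C_R\,\mu(B(z,r)).
\]

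The main obstacle is the first comparison $\mu(z_Q)\simeq\mu_N(Q)$, since it requires carefully aligning the radii $c_1\eta^k$, $c_R\eta^k$, $C_R\eta^k$ and $c_2\eta^k$ under doubling. Once that uniform comparison is in place, the remaining steps are routine bookkeeping.
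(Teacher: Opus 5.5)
Your proof is correct. The quasi-flow part is the same as the paper's: you reduce to $\mu(z_Q)\simeq\mu_N(Q)$ via Lemma~\ref{lem:ball-measure}, \eqref{inclusions} and doubling, then use that the generation-$m$ descendants of $Q$ partition it.

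You take a different route for local doubling.
\begin{itemize}
\item The paper argues in $S$. For neighbours $z_Q\sim z_R$ it uses the $(1,C)$-quasi-isometry of Theorem~\ref{thm:1QI} to get $d_S(\Psi(z_Q),\Psi(z_R))\le 1+k_2$. It then uses local doubling of $\rho$ (Lemma~\ref{lem:ball-measure}) to compare $\rho(B_S(\Psi(z_Q),1))$ with $\rho(B_S(\Psi(z_R),1))$. Finally it simply asserts that, by bounded geometry, this neighbour comparability is equivalent to local doubling of $\mu$.
\item You stay in $N$ and use only:
\begin{itemize}
\item the adjacency criterion $d_N(c_Q,c_R)<2\lambda\eta^k$;
\item the inclusion $\widehat Q\subset B_N(c_Q,2c_2\eta^{k+1})$;
\item doubling of $\mu_N$;
\item the comparison $\mu(z_Q)\simeq\mu_N(Q)$ you already proved.
\end{itemize}
You also write out the counting argument that the paper leaves implicit: chain along at most $2R$ edges, bound $\#B_{\mathcal{SW}_N}(z,2R)$ by bounded degree, and use $z\in B_{\mathcal{SW}_N}(z,r)$.
\end{itemize}

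The paper's route is shorter given the machinery already in place. Yours is self-contained, does not depend on the quasi-isometry theorem, and makes explicit the one step the paper only states.
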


\begin{proof}
Let $k=\mathcal h(z_Q)$ and let $z_R\in s_m(z_Q)$.
By Lemma~\ref{lem:ball-measure},
\[
\mu(z_Q)
=
\rho(B_S(\Psi(z_Q),1))
\simeq
\mu_N(B_N(c_Q,\eta^k)),
\]
and similarly,
\[
\mu(z_R)
\simeq
\mu_N(B_N(c_R,\eta^{k-m})).
\]
Using \eqref{inclusions} and the fact $\mu_N$ is doubling, we deduce that
\[
\mu(z_Q) \simeq \mu_N(B_N(c_Q,\eta^k))
\simeq
\mu_N(Q) \quad \text{and} \quad
\mu(z_R)\simeq\mu_N(B_N(c_R,\eta^{k-m}))
\simeq
\mu_N(R),
\]
with constants independent of $Q$, $R$ and $m$.
Summing over all descendants of generation $m$ yields
\[
\sum_{z_R\in s_m(z_Q)}\mu(z_R)
\simeq
\sum_{\substack{
R\subset Q\\
\mathcal{h}(R)=\mathcal{h}(Q)-m
}}\mu_N(R).
\]
Since the cubes in $\{R\subset Q \ : \ \mathcal{h}(R)=\mathcal{h}(Q)-m
\}$ form a partition of $Q$,
\[
\sum_{\substack{
R\subset Q\\
\mathcal{h}(R)=\mathcal{h}(Q)-m
}}\mu_N(R)
=
\mu_N(Q).
\]
Combining the previous estimates gives
\[
\sum_{\substack{
R\subset Q\\
\mathcal{h}(R)=\mathcal{h}(Q)-m
}}\mu(z_R)
\simeq
\mu_N(Q)
\simeq
\mu(z_Q).
\]
The implied constants depend only on the doubling constant of $\mu_N$
and on the parameters of the dyadic system, and are independent
of $Q$ and $m$.

This proves the quasi-flow property of $\mu$. Since $\mathcal{SW}_N$ has bounded geometry by construction, the local doubling property is equivalent to $\mu(z_Q) \simeq \mu(z_R)$ whenever $z_Q \sim z_R$. Observe that if $z_Q \sim z_R$ then thanks to the  1-quasi-isometric relation $d_S(\Psi(z_Q),\Psi(z_R))\le 1+k_2$, thus $\mu(z_Q)=\rho(B_S(\Psi(z_Q),1)) \simeq \rho(B_S(\Psi(z_R),1))=\mu(z_R),$ concluding the proof. 
\end{proof}
It remains to verify the last assumption of
Theorem~\ref{transference}, namely, we now prove that $S$ supports a local Poincaré inequality.
\begin{lemma}\label{lem:localPI}
Suppose that $(N,d_N,\mu_N)$ is a doubling metric measure space
supporting a global weak $L^p$-Poincar\'e inequality for some $p\in[1,\infty)$. Namely, there exist
$K_N>0$, $\lambda \ge 1$ such that for every ball
$B_N(x,r)$,
every locally integrable function $u$, and every upper gradient $g_u$ of $u$,
\[
\int_{B_N(x,r)}
|u-u_{B_N(x,r)}|^p\,\mathrm{d}\mu_N
\le
K_N r^p
\int_{B_N(x,\lambda r)}
g_u^p\,\mathrm{d}\mu_N.
\]
Then, for every $r_0>0$, $(S,d_S,\rho)$ supports a local $L^p$-Poincar\'e inequality up to scale $r_0$.
More precisely, for every $r_0>0$ there are  $\lambda'\ge1$ and $C>0$ depending on $r_0$ such that for every
ball $B_S(z,r)$ with $r<r_0$,
every locally integrable function $F$ on $S$ and every upper gradient $G$ of $F$,
\begin{align}\label{lemma46}
\int_{B_S(z,r)}
|F-F_{B_S(z,r)}|^p\,\mathrm{d}\rho
\le
Cr^p
\int_{B_S(z,\lambda 'r)}
G^p\,\mathrm{d}\rho.
\end{align}
\end{lemma}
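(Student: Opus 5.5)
Fix $r_0>0$ and a ball $B=B_S(z,r)$ with $z=(x_0,a_0)$ and $r<r_0$. The plan is to compare $B$ with a product box and prove a product Poincaré inequality there. On such boxes the metric $d_S$ is bi-Lipschitz to the product metric $d_N/a_0 \oplus |\log a|$, and $\rho$ is comparable to $\mu_N\otimes \mathrm{d}a/a$.

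\textbf{Step 1: boxes.} By Lemma~\ref{lem:ball-measure} (applied with radius $r$, where constants depending on $r_0$ are allowed), $B$ is contained in the box
\[
P(z,r)=B_N(x_0,C a_0 r)\times (a_0e^{-r/C_N},a_0e^{r/C_N}).
\]
Some care is needed for small $r$. Taylor expansion of $\mathrm{arcosh}(1+t)\approx \sqrt{2t}$ gives $c_r\simeq r$ and $C_r\simeq r$ uniformly for $r<r_0$, so boxes of comparable size lie inside and outside balls. Concretely, I will show that for $(x,a),(x',a')$ with $a,a'\in[a_0e^{-r_0'},a_0e^{r_0'}]$,
\[
d_S((x,a),(x',a'))\simeq_{r_0} \frac{d_N(x,x')}{a_0}+|\log a-\log a'|.
\]
Consequently $B\subset P\subset B_S(z,C'r)$, where $P=B_N(x_0,\tau a_0r)\times I$ and $I=(a_0e^{-\tau r},a_0e^{\tau r})$. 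Since $\rho$ is locally doubling, $\rho(P)\simeq\rho(B)$. It then suffices to prove
\[
\int_P|F-F_P|^p\,\mathrm{d}\rho\le Cr^p\int_{\lambda''P}G^p\,\mathrm{d}\rho,
\]
because $\int_B|F-F_B|^p\le 2^p\int_B|F-c|^p$ for any constant $c$, and in particular for $c=F_P$.

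\textbf{Step 2: product Poincaré.} Write $\mathrm{d}\rho=\mathrm{d}\mu_N\,\mathrm{d}a/a$ and bound the double integral $\frac1{\rho(P)}\iint_{P\times P}|F(x,a)-F(x',a')|^p$. Split the difference along $(x,a)\to(x',a)\to(x',a')$.

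For the horizontal piece, fix $a$. The restriction $x\mapsto F(x,a)$ has upper gradient $C G(\cdot,a)/a$, since horizontal curves at height $a$ have $d_S$-length at most $C_N\cdot$(their $d_N$-length)$/a$ up to constants (arcosh is Lipschitz near $1$ in $\sqrt{t}$). Apply the Poincaré inequality on $N$ on the ball of radius $\tau a_0 r$, noting $a\simeq a_0$. This gives a bound $(a_0r)^p a_0^{-p}\int G^p=r^p\int G^p$ over the $\lambda$-dilated ball, integrated in $a$.

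For the vertical piece, fix $x'$. The curve $t\mapsto (x',a_0e^t)$ has $d_S$-length $C_N|t|$, so the 1D fundamental theorem along this segment combined with Hölder gives $|F(x',a)-F(x',a')|^p\le C r^{p-1}\int_I G(x',s)^p\,\mathrm{d}s/s$. Integrating over $P\times P$ gives $Cr^p\int_{P'}G^p\,\mathrm{d}\rho$.

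\textbf{Main obstacle.} The delicate point is that upper gradients only control $F$ along curves, not on almost every line. The Fubini slicing therefore needs care. For each fixed $a$, $G(\cdot,a)$ is genuinely an upper gradient of $F(\cdot,a)$ because it is a pointwise inequality along every curve. Along vertical segments the upper-gradient inequality gives $|F(x',a)-F(x',a')|\le\int G$ for every pair of points, provided $F$ is defined pointwise; for locally integrable $F$ with upper gradient one uses the standard convention that the inequality holds on all curves. Measurability of the slices follows from Fubini once $G$ is Borel.

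Assembling the pieces, both terms are bounded by $Cr^p\int_{B_S(z,\lambda' r)}G^p\,\mathrm{d}\rho$, where $\lambda'$ is obtained from the box comparisons in Step 1 and depends only on $r_0$, $\lambda$ and $C_N$.
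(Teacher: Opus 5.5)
Your proof is correct. It shares the paper's architecture: compare $B_S(z,r)$ with a product box, prove a Poincar\'e inequality on the box, then return to the ball via $\int_B|F-F_B|^p\le 2^p\int_B|F-c|^p$ with $c$ the box average. The two arguments differ only in how the product inequality is obtained.

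The paper works in $t=\log a$ and splits $\widetilde F-\widetilde F_Q=(\widetilde F-\widetilde F_I)+(\widetilde F_I-(\widetilde F_I)_Q)$. It handles the first term with the one-dimensional Poincar\'e inequality on each vertical line. For the second term it must prove that the $t$-average $g_I(x)=|I|^{-1}\int_I C_N\widetilde G(x,t)e^{-t}\,\mathrm{d}t$ is an upper gradient of the averaged function $\widetilde F_I$, using horizontal lifts plus Tonelli. It then applies the Poincar\'e inequality on $N$ once.

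You instead bound the double integral over $P\times P$ along the L-shaped path $(x,a)\to(x',a)\to(x',a')$. You apply the Poincar\'e inequality on $N$ slice by slice, each slice $F(\cdot,a)$ having the upper gradient $C_NG(\cdot,a)/a$ directly from the horizontal-lift speed bound. This avoids the averaged-upper-gradient step. The price is harmless: you apply the inequality on $N$ only for a.e.\ $a$, namely those $a$ for which $F(\cdot,a)$ is integrable on the relevant ball, which Fubini guarantees. Both routes rest on the same two curve estimates.

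Your Step 1 also fills in a point the paper passes over. You check that the inner and outer box radii scale linearly in $r$ as $r\to0$. Indeed, writing $s=e^\sigma$ and $\alpha=r/C_N$, one has $2s(\cosh\alpha-1)-(s-1)^2=2s(\cosh\alpha-\cosh\sigma)\simeq r^2$ for $|\sigma|\le\alpha/2$. The paper instead cites Lemma \ref{lem:ball-measure}, whose constants are only said to depend on $R$.

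One small correction. The comparability $d_S\simeq d_N(x,x')/a_0+|\log a-\log a'|$ is false if you only restrict $a$ and $a'$, because $d_S$ grows only logarithmically in $d_N(x,x')/a_0$. You also need $d_N(x,x')\lesssim_{r_0}a_0$. This holds on every box you actually use, so the argument is unaffected.
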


\begin{proof}

Write points of $S$ as $(x,a)$ and introduce logarithmic coordinates $t=\log a.$
Then, the measure becomes $\mathrm{d}\rho(x,a)=\mathrm{d}\mu_N(x)\,\mathrm{d}t.$
Fix a point $z_0=(x_0,a_0)$ and set $t_0=\log a_0$.
By Lemma~\ref{lem:ball-measure}, there exist constants
$c_1,c_2>0$ depending on $r_0$ such that for every $r<r_0$,
\begin{align}\label{rombi}
Q_{c_1r}(z_0)
\subset
B_S(z_0,r)
\subset
Q_{c_2r}(z_0),
\end{align}
where
\[
Q_s(z_0)
=
B_N(x_0,a_0 s)
\times
(t_0-s,t_0+s), \qquad s>0.
\]
Moreover,
\[
\rho(Q_{c_2r}(z_0))
\simeq_{r_0}
\rho(Q_{c_1r}(z_0)).
\]
Therefore it suffices to prove the Poincar\'e inequality on cylinders.\\
Let
$Q
=
B_N(x_0,R)\times I$ where $R=a_0 r$ and $I=(t_0-r,t_0+r)$.
Given $F\in L^1_{\mathrm{loc}}(S)$, for simplicity write $\widetilde F(x,t)=F(x,e^t)$ for every $x \in N$ and $t \in \mathbb R$ and define
\[
\widetilde F_I(x)
=
\frac1{|I|}
\int_I
\widetilde F(x,t)\,\mathrm{d}t \qquad \forall x\in N.
\]
Then
\[
\widetilde F-\widetilde F_Q
=
(\widetilde F-\widetilde F_I)
+
(\widetilde F_I-(\widetilde F_I)_Q),
\]
and hence
\[
|\widetilde F-\widetilde F_Q|^p
\le
2^{p-1}
|\widetilde F-\widetilde F_I|^p
+
2^{p-1}
|\widetilde F_I-(\widetilde F_I)_Q|^p.
\]
Integrating over $Q$ yields
\[
\int_Q
|\widetilde F-\widetilde F_Q|^p
\,\mathrm{d}\mu_N\mathrm{d}t
\le
2^{p-1}(I_1+I_2),
\]
where
\[
I_1
=
\int_Q
|\widetilde F-\widetilde F_I|^p\,\mathrm{d}\mu_N\mathrm{d}t
\]
and
\[
I_2
=
\int_Q
|\widetilde F_I-(\widetilde F_I)_Q|^p\,\mathrm{d}\mu_N\mathrm{d}t.
\]
\noindent
We first estimate  $I_1$. Fix $x\in B_N(x_0,R)$.
Consider the vertical curve
\[
\gamma_x(s)=(x,e^s),
\qquad s\in I.
\]
Observe that $d_S(\gamma_x(s),\gamma_x(t))=C_N\mathrm{arcosh}(\frac{e^{t-s}+e^{s-t}}{2})=C_N|t-s|$, so $\gamma_x$ has constant speed equal to ${C_N}$.\\
If $G$ is an upper gradient of $F$, setting $\widetilde G(x,t)=G(x,e^t)$, we get

\[
|\widetilde F(x,t)-\widetilde F(x,\tau)|
\le {C_N}
\int_t^\tau
\widetilde G(x,s)\,ds \qquad \forall  t,\tau \in I.
\]
 In particular $C_N \widetilde G$ is an upper gradient for the one variable function $\widetilde{F}(x,\cdot)$.
The one-dimensional global Poincar\'e inequality on the interval $I$ therefore gives
\[
\int_I
|\widetilde F(x,t)-\widetilde F_I(x)|^p\,\mathrm{d}t
\le
Cr^p
\int_I
\widetilde G(x,t)^p\,\mathrm{d}t.
\]
Integrating in $x$ yields

\[
I_1
\le
Cr^p
\int_Q
\widetilde{G}^p\,\mathrm{d}\mu_N\mathrm{d}t.
\]
\medskip
\noindent
We now estimate $I_2$. Since $\widetilde F_I$ depends only on $x$ and \begin{align*}
    (\widetilde F_I)_Q&=\frac{1}{|I|\mu_N(B_N(x_0,R))}\int_I \int_{B_N(x_0,R)} \widetilde F_I (x) \mathrm{d}\mu_N \mathrm{d}t= (\widetilde F_I)_{B_N(x_0,R)}
\end{align*}
we get that
\[
I_2
=
|I|
\int_{B_N(x_0,R)}
|\widetilde F_I-(\widetilde F_I)_{B_N(x_0,R)}|^p
\,\mathrm{d}\mu_N.
\]
Applying the global Poincar\'e inequality on $N$ gives

\[
I_2
\le
C
|I|
R^p
\int_{B_N(x_0,\lambda R)}
g_I^p\,\mathrm{d}\mu_N ,
\]
if $g_I$ is any upper gradient of $\widetilde F_I$.\\
We claim that
\begin{align}\label{defgI}
g_I(x)
=
\frac1{|I|}
\int_I
\frac{C_N\widetilde G(x,t)}{e^t}
\,\mathrm{d}t
\end{align}
is an upper gradient of $\widetilde F_I$.

We first estimate the metric speed of horizontal curves. Let $\gamma:[0,L]\to N$ be a rectifiable curve parametrized by arc length
with respect to $d_N$ joining $x$ and $y$, and fix $t\in\mathbb R$.
Define its horizontal lift
\[
\Gamma(s)=(\gamma(s),e^t) \qquad \forall s\in[0,L].
\]
For every $h\neq0$,
\[
\begin{aligned}
d_S(\Gamma(s+h),\Gamma(s))
&=
2C_N
\operatorname{arsinh}
\!\left(
\frac{
d_N(\gamma(s+h),\gamma(s))
}{
2e^t
}
\right)  \\
&\le
\frac{C_N}{e^t}
d_N(\gamma(s+h),\gamma(s)),
\end{aligned}
\]
since $\mathrm{arcosh}(1+2u^2)=2\,\mathrm{arsinh}(u)$ and $\mathrm{arsinh}(u)\le u$ for every $u\ge0$.
Dividing by $|h|$ and letting $h\to0$, we obtain
\[
|\Gamma'|_S(s)
\le
\frac{C_N}{e^t}
|\gamma'|_N(s)
=
\frac{C_N}{e^t}
\]
for almost every $s\in[0,L]$, because $\gamma$ is parametrized by arc length and $|\Gamma'|_S$ and $|\gamma'|_N$ denote the metric speeds of $\Gamma$ and $\gamma$, respectively. Therefore,
\[
\begin{aligned}
\int_\Gamma G\,\mathrm{d}s_S
&=
\int_0^L
\widetilde G(\gamma(s),t)\,
|\Gamma'|_S(s)\,\mathrm{d}s   \\
&\le
\frac{C_N}{e^t}
\int_0^L
\widetilde G(\gamma(s),t)\,\mathrm{d}s  \\
&=
\frac{C_N}{e^t}
\int_\gamma
\widetilde G(\xi,t)\,\mathrm{d}s_N(\xi).
\end{aligned}\]
Then, since $\Gamma$ joins $(x,e^t)$ and $(y,e^t)$, the upper gradient property of $G$ on $S$ gives $|F(x,e^t)-F(y,e^t)| \le \int_{\Gamma} G \mathrm{d}s_S$. Hence, an application of Tonelli's theorem yields
\[
|\widetilde F_I(x)-\widetilde F_I(y)| \le \frac{1}{|I|}\int_I \int_\Gamma G \mathrm{d}s_S\le \int_\gamma \frac{1}{|I|}\int_I\frac{C_N\widetilde G(\xi,t)}{e^t} \mathrm{d}t\mathrm{d}s_N(\xi).
\]
This shows that $g_I$ defined in \eqref{defgI} is an upper gradient of $\widetilde F_I$. \\
Since $|t-t_0|\le r$ and $r\le r_0$, $e^t \simeq_{r_0} a_0$,
uniformly on $I$. Therefore
\[
g_I(x)
\le
\frac{C}{a_0}
\frac1{|I|}
\int_I
\widetilde G(x,t)\,\mathrm{d}t,
\]
where $C$ depends on $r_0$. By Jensen's inequality,
\[
g_I(x)^p
\le
\frac{C}{a_0^p}
\frac1{|I|}
\int_I
\widetilde G(x,t)^p\,\mathrm{d}t.
\]
Consequently,
\[
I_2
\le
C
|I|
R^p
\frac1{a_0^p}
\frac1{|I|}
\int_{\lambda Q}
\widetilde G^p\,\mathrm{d}\mu_N\mathrm{d}t\le
Cr^p
\int_{\lambda Q}
\widetilde G^p\,\mathrm{d}\mu_N\mathrm{d}t.
\]
where $\lambda Q= B_N(x_0,\lambda R) \times I$ and we have used that $R=a_0r.$

Combining the estimates for $I_1$ and $I_2$ we find
\begin{align}\label{Poinc on cyl}
\int_Q
|\widetilde F-\widetilde F_Q|^p
\,\mathrm{d}\mu_N\mathrm{d}t
\le
Cr^p
\int_{\lambda Q}
\widetilde G^p\,\mathrm{d}\mu_N\mathrm{d}t.
\end{align}
Finally, using  \eqref{rombi}
and the comparability of the corresponding measures,
a standard averaging argument yields \eqref{lemma46} as follows.
For every constant $c\in \mathbb R$
\[
|F-F_{B_S(z_0,r)}|
\le |F-c|+|c-F_{B_S(z_0,r)}|,
\]
while
\begin{align*}
|c-F_{B_S(z_0,r)}|
&=
\left|
\frac1{\rho(B_S(z_0,r))}
\int_{B_S(z_0,r)}(F-c)\,\mathrm{d}\rho
\right|
\\&\le
\left(
\frac1{\rho(B_S(z_0,r))}
\int_{B_S(z_0,r)}|F-c|^p\,\mathrm{d}\rho
\right)^{1/p}
\end{align*}
by Hölder's inequality. Hence
\[
\int_{B_S(z_0,r)}|c-F_{B_S(z_0,r)}|^p\,\mathrm{d}\rho
\le
\int_{B_S(z_0,r)}|F-c|^p\,\mathrm{d}\rho,
\]
and therefore
\[
\int_{B_S(z_0,r)}|F-F_{B_S(z_0,r)}|^p\,\mathrm{d}\rho
\le
2^p
\int_{B_S(z_0,r)}|F-c|^p\,\mathrm{d}\rho.
\]
Choosing $c=F_{Q_{c_2r}(z_0)}$, \eqref{rombi} implies

\[
\int_{B_S(z_0,r)}|F-F_{B_S(z_0,r)}|^p\,\mathrm{d}\rho
\le
C_p
\int_{Q_{c_2r}(z_0)}
|F-F_{Q_{c_2r}(z_0)}|^p\,\mathrm{d}\rho.
\]

Applying the Poincaré inequality \eqref{Poinc on cyl} on the cylinder
$Q_{c_2r}(z_0)$, we obtain
\[
\int_{B_S(z_0,r)}|F-F_{B_S(z_0,r)}|^p\,\mathrm{d}\rho
\le
Cr^p
\int_{\lambda Q_{c_2r}(z_0)} G^p\,\mathrm{d}\rho,
\]
where $\lambda Q_{c_2r}(z_0)=B(x_0,\lambda a_0c_2 r) \times (t_0-c_2r,t_0+c_2r)$. Finally, by Lemma \ref{lem:ball-measure}, there exists a constant $\lambda'>0$ depending on $r_0$ such that
\[
\lambda Q_{c_2r}(z_0)\subset B_S(z_0,\lambda'r),
\]
and therefore
\[
\int_{B_S(z_0,r)}
|F-F_{B_S(z_0,r)}|^p\,\mathrm{d}\rho
\le
Cr^p
\int_{B_S(z_0,\lambda'r)}G^p\,\mathrm{d}\rho.
\]
This concludes the proof.
\end{proof}
\begin{proof}[Proof of Theorem \ref{main:theorem}]
By Corollary \ref{cor:quasi-flow-doubling}, the measure $\mu$ on $\mathcal{SW}_N$ defined in \eqref{defmu} is a locally doubling quasi-flow on the spiderweb $\mathcal{SW}_N$.
Hence, by Theorem~\ref{Proposition Poinc on SW}, the graph $(\mathcal{SW}_N,d_{\mathcal{SW}_N},\mu)$ supports a
global $L^p$-Poincaré inequality.

By Theorem~\ref{thm:1QI}, the graph $\mathcal{SW}_N$ is $(1,C)$-quasi-isometric to
$(S,d_S)$ and its vertices form a uniformly dense and uniformly
separated subset of $S$. Moreover, by construction, the measures
$\mu$ and $\rho$ are compatible in the sense of Definition~\ref{def:comp}.

Finally, Lemma~\ref{lem:localPI} shows that $(S,d_S,\rho)$ supports a weak local
$L^p$-Poincaré inequality up to every prescribed scale. In particular,
it holds up to a scale larger than $2D+k_2$, where $D$ and $k_2$ are
the constants appearing in Theorem~\ref{transference}.

Therefore all the assumptions of Theorem~\ref{transference} are satisfied, and we
conclude that $(S,d_S,\rho)$ supports a global $L^p$-Poincaré
inequality.
\end{proof}

\end{document}